\renewcommand{\emph}[1]{\textit{#1}}
\newcommand{\colormark}{\blue}
\definecolor{brown}{cmyk}{0, 0.72, 1, 0.45}
\definecolor{grey}{gray}{0.5}
\def\blue{\color{blue}}
\def\red{\color{red}}
\newcommand{\old}[1]{}
\newcounter{rot}
\newcommand{\ignore}[1]{}
\def\cA{{\mathcal A}}
\def\cB{{\mathcal B}}
\def\cO{{\mathcal O}}
\newcommand{\set}[1]{\left\{#1\right\}}
\def\ii_(#1,#2){i_{#1}^{#2}}
\def\a{\alpha}
\def\b{\beta}
\def\d{\delta}
\def\D{\Delta}
\def\e{\varepsilon}
\def\f{\phi}
\def\g{\gamma}
\def\z{\zeta}
\def\th{\theta}
\def\l{\lambda}
\def\m{\mu}
\def\p{\pi}
\def\r{\rho}
\def\t{\tau}
\def\om{\omega}
\def\Up{\Upsilon}
\def\1{{\bf 1}}
\def\0{{\bf 0}}
\def\Up{\Upsilon}
\newcommand{\rdup}[1]{\left\lceil #1 \right\rceil}
\def\cE{\mathcal{E}}
\def\cF{\mathcal{F}}
\def\cL{\mathcal{L}}
\def\cW{\mathcal{W}}
\def\Z{\mathbb{Z}}
\newcommand{\brac}[1]{\left( #1 \right)}
\def\E{{\bf E}}
\renewcommand{\Pr}{\operatorname{\bf Pr}}
\newcommand\bfrac[2]{\left(\frac{#1}{#2}\right)}
\newcommand{\ep}{\varepsilon}
\newtheorem{theorem}{Theorem}[section]
\newtheorem{conjecture}[theorem]{Conjecture}
\newtheorem{lemma}[theorem]{Lemma}
\newtheorem{q}[theorem]{Question}
\newtheorem{remthm}[theorem]{Remark}
\newcounter{thmtemp}
\def\omk{\omega_{k,\e}}
\def\tk{\tau_{k,\e}}
\newcommand{\nospace}[1]{}
\def\path{\operatorname{PATH}}
\newcommand{\beq}[2]{\begin{equation}\label{#1}#2\end{equation}}
\newcommand{\mult}[2]{\begin{multline}\label{#1}#2\end{multline}}
\newcommand{\sbs}{\subset}
\newcommand{\stm}{\setminus}
\def\cU{\mathcal{U}}
\newcommand{\tend}{{t_{\mathrm{end}}}}
\def\hht{\widehat{t}}
\def\apb{\approx_b}
\begin{document}
\title{Diffusion limited aggregation on the Boolean lattice}
\author{Alan Frieze}\thanks{Research supported in part by NSF grant
    CCF1013110}
\author{Wesley Pegden}\thanks{Research supported in part by NSF grant
    DMS1363136}
\date{\today}

\begin{abstract}
  In the Diffusion Limited Aggregation (DLA) process on $\Z^2$, or more generally $\Z^d$, particles aggregate to an initially occupied origin by arrivals on a random walk.  The scaling limit of the result, empirically, is a fractal with dimension strictly less than $d$.  Very little has been shown rigorously about the process, however.

  We study an analogous process on the Boolean lattice $\set{0,1}^n$, in which particles take random decreasing walks from $(1,\dots,1)$, and stick at the last vertex before they encounter an occupied site for the first time; the vertex $(0,\dots,0)$ is initially occupied.  In this model, we can rigorously prove that lower levels of the lattice become full, and that the process ends by producing an isolated path of unbounded length reaching $(1,\dots,1)$.
\end{abstract}

\maketitle
\section{Introduction}
In the classical model of Diffusion Limited Aggregation (DLA), we begin with a single particle cluster placed at the origin of our space, and then, one-at-a-time, let particles take random walks ``from infinity'' until they collide with, and then stick to, the existing cluster; when the space is not recurrent, some care is required to make this precise.

Introduced by Witten and Sander in 1981 \cite{witten}, the process is particularly natural in Euclidean space (with particles taking Brownian motions) or on $d$-dimensional lattices; in these cases, the process is empirically observed to produce structures with fractal dimensions strictly less than the dimension of the space (e.g., roughly $1.7$ for $d=2$, with slight but seemingly nonnegligible dependence on details such as the choice of underlying lattice or the precise ``sticking'' condition).

Strikingly little has been proved rigorously about the model, however.  Kesten \cite{kesten} proved an a.s.~asymptotic upper bound of $Cn^{2/3}$ on the radius of the $n$-particle cluster for the lattice $\Z^2$, for example, but no nontrivial lower bounds are known.  In particular, it is not even known rigorously that the process does not have a scaling limit with positive density.   (Eldan showed that an analogous process in the hyperbolic plane \emph{does} aggregate to positive density \cite{Eldan}).  Eberz-Wagner showed at least that the process leaves infinitely many holes \cite{holes}. For some more recent results, see Benjamini and Yadin \cite{BY}.

In this paper, we study an analogous aggregation process on the Boolean lattice $\cB=\{0,1\}^n$, which evolves at discrete times $t=0,1,\dots,$ each of which has an associated cluster $C_t$.  $C_0$ consists of just the vertex $\0=(0,\dots,0)\in \cB$.  Then, for $t>0$, $C_t$ is produced from $C_{t-1}$ by choosing a random decreasing walk $\r_t$ from $\1=(1,\dots,1)$, letting $v$ be the last vertex of the longest initial segment of $\r_t$ which is disjoint from $C_{t-1}$, and setting $C_t=C_{t-1}\cup \{v\}$.  The process terminates at the first time $\tend$ when $C_{\tend}\ni \1$.

In particular, the clusters $C_t$ grow from $\0$ by aggregation of decreasing random walks from $\1$.   Our initial motivation for considering this model was to evaluate the impact of very large dimensionality on a DLA-like process.  (An analogous motivation underlies work on percolation in the Boolean lattice; see for example \cite{Bela,FP,M}.)  We will see, however, that the Boolean lattice also allows strong rigorous (and perhaps, surprising) statements to be made about the structure of the aggregate.  In particular, let $\cL_k=\{x\in \cB\mid |x|=k\}$ denote the $k$th level of $\cB$, so that $|\cL_k|=\binom{n}{k}$. We will prove the following.
\begin{theorem}\label{t.full}
There exists $c_0>0$ such that w.h.p.\footnote{A sequence of events $\cE_n,n\geq 0$ occurs with high probability (w.h.p.) if $\lim_{n\to\infty}\Pr(\cE_n)=1$.},~for all $$k\leq k_0:=\sqrt{\tfrac{c_0n}{\log n}},$$ 
we have
  \[
  \cL_k\sbs C_{\tend}.
  \]
\end{theorem}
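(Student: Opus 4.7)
The approach is induction on $k$: I show that the process fills the levels $\cL_0, \cL_1, \dots, \cL_{k_0}$ in order, while maintaining that the higher levels remain sparse enough to (a) allow walks to keep reaching the current filling level and (b) prevent the process from terminating prematurely. A uniform decreasing walk $\r_t$ from $\1$ is determined by a uniform random permutation $\pi \in S_n$, so its level-$j$ vertex is a uniform random $j$-subset of $[n]$. Writing $A_j(t) = C_t \cap \cL_j$ and $p_j(t) = |A_j(t)|/\binom{n}{j}$, the walk $\r_t$ sticks at $v \in \cL_k$ precisely when its level-$k$ vertex equals $v \notin A_k(t-1)$, its level-$(k-1)$ vertex lies in $A_{k-1}(t-1)$, and its level-$j$ vertex avoids $A_j(t-1)$ for every $j > k$.

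For each $k \leq k_0$ I inductively construct a time $T_k$, with $T_k - T_{k-1} = \Delta_k := C\binom{n}{k}\log\binom{n}{k}$, for which w.h.p.\ (I) $\cL_j \sbs C_{T_k}$ for every $j \leq k$, and (II) $p_{k+j}(T_k) \leq \eps_j$ for every $j \geq 1$, where $(\eps_j)$ decays geometrically. The base case $k=0$ is trivial. For the inductive step across $[T_{k-1},T_k]$: the ``blocking at level $k-1$'' condition is automatic because $\cL_{k-1} \sbs C$ by (I). Exchangeability of the process under coordinate permutations implies that, conditional on the walk's level-$k$ vertex being $v$, its level-$j$ vertex for $j > k$ is uniform on $j$-sets containing the support of $v$, with expected $A_j$-probability equal to $p_j$. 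A union bound using (II) then shows the walk reaches level $k$ with probability at least $1 - \sum_{j\geq 1}\eps_j \geq \tfrac{1}{2}$. Consequently, as long as $v \notin A_k(t-1)$, walk $t$ sticks at $v$ with probability at least $\frac{1}{2\binom{n}{k}}$, and a standard coupon-collector/Chernoff argument with a union bound over $\cL_k$ fills the level w.h.p.\ within $\Delta_k$ walks.

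Simultaneously I must re-establish (II) at $T_k$: the expected number of walks sticking at level $k+j$ during the interval is at most $\Delta_k \prod_{i=1}^{j-1}\eps_i$ (again using (II) and exchangeability), giving density at level $k+j$ of order $\Delta_k \prod_{i=1}^{j-1}\eps_i/\binom{n}{k+j}$ after a martingale/Chernoff concentration. The critical estimate is $\Delta_k/\binom{n}{k+1} = O\!\left((k+1)\log\binom{n}{k}/n\right) = O(k^2 \log n/n)$, which is bounded by $c_0$ precisely when $k \leq k_0 = \sqrt{c_0 n/\log n}$. Choosing $c_0$ small enough makes $\eps_1$ small, and the geometric factors $\eps_j$ then control leakage all the way to level $n-1$, giving $\E[|A_{n-1}(T_{k_0})|] \ll 1$ and hence $\tend > T_{k_0}$ w.h.p. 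The hardest part is exactly this coupled bookkeeping: ensuring that the sparsity hypothesis (II) is strong enough at each step for the filling argument to close, yet weak enough to be maintainable throughout the entire induction. The quadratic appearance of $k^2$ in the critical leakage estimate is precisely what dictates the threshold $k_0 \sim \sqrt{n/\log n}$.
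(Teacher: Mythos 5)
Your skeleton matches the paper's: grow level by level, control the layers above, and the threshold $k_0\approx\sqrt{n/\log n}$ emerges from the ratio of the level-$k$ filling time to $\binom{n}{k+1}$ (your $\Delta_k/\binom{n}{k+1}=O(k^2\log n/n)$, the paper's $\omk\gtrsim k\log(ne/k)$ in \eqref{anyvomcond}). But there is a genuine gap in the central probabilistic step.

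To fill $\cL_k$ you need, for each fixed $v\in\cL_k$, a uniform lower bound of order $1/\binom{n}{k}$ on the chance that a single walk sticks at $v$, so that a union bound over the $\binom{n}{k}$ vertices can run. You argue: the walk reaches level $k$ unblocked with probability at least $1/2$ (true, since the level-$j$ vertex of a walk is unconditionally uniform on $\cL_j$, so blocking has probability at most $\sum_j p_j$), hence it sticks at $v$ with probability at least $\frac{1}{2\binom{n}{k}}$. That last implication silently replaces the unconditional event ``unblocked'' with the conditional one ``unblocked given the level-$k$ vertex is $v$.'' Conditional on passing through $v$, the higher-level vertices are uniform only over supersets of $v$, and the relevant blocking probability is the paper's vertex-specific quantity $\Phi_{v,t}$, not the global density $p_j$. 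Exchangeability shows only that $\Phi_{v,t}$, averaged over $v\in\cL_k$, is controlled by $\sum_j p_j$; the union bound needs $\max_{v\in\cL_k}\Phi_{v,t}$ to be small with probability $1-o\brac{\binom{n}{k}^{-1}}$, a concentration statement, not a first-moment statement. This is exactly the failure mode to fear in DLA: rich-get-richer dynamics can concentrate the occupied sites above level $k$ over a few $v$'s. The paper supplies the missing concentration in Lemma~\ref{l.PhiEp}, writing $\Phi_{v,t}$ (after truncating at a provably empty level) as a sum of per-walk contributions $\xi_{v,t}$, bounding their mean and range, and applying the Bernstein-type inequality of Lemma~\ref{l.conc}; Lemma~\ref{l.PhiUp} then couples this with the unoccupied down-neighbor fraction $\Up_{v,t}$ in an induction on $k$. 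Without that concentration your argument does not close. The ``coupled bookkeeping'' you flag as the hard part is real but secondary (and the per-walk sticking probability at level $k+j$ should be $\lesssim\eps_{j-1}$, the chance of hitting $A_{k+j-1}$, not $\prod_{i<j}\eps_i$); the missing per-vertex concentration is the gap that matters.
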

\begin{theorem}\label{t.dense}
  For any $K>0$, we have w.h.p.~that for all $k\leq \tfrac{n}{150K\log n}$, we have
  \[
  |\cL_k\cap C_{\tend}|\geq \brac{1-\tfrac{1}{n^K}}\cdot |\cL_k|.
  \]
\end{theorem}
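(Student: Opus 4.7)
I argue by induction on $k$, with the base case $k\le k_0=\sqrt{c_0 n/\log n}$ supplied by Theorem \ref{t.full}. For the inductive step with $k_0<k\le n/(150K\log n)$, let $T=T_{k-1}$ be the first time at which $|\cL_{k-1}\cap C_t|\ge (1-n^{-K})|\cL_{k-1}|$; this exists by the inductive hypothesis. After time $T$, every walk $\r_t$ aggregates at some level $\ge k$: writing $a_l(t)=|\cL_l\cap C_t|$ and $A_t^{>k}=C_t\cap\bigcup_{l>k}\cL_l$, a walk after $T$ aggregates a given $v\in\cL_k\setminus C_{t-1}$ exactly when (i) the walk's level-$k$ vertex is $v$ (probability $\binom{n}{k}^{-1}$) and (ii) the walk's chain from $\1$ to $v$ avoids $A_{t-1}^{>k}$.

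The key estimate bounds the probability that (ii) fails. By the symmetry of the process under coordinate permutations, $\Pr(w\in C_t)=\E a_l(t)/\binom{n}{l}$ for each $w\in \cL_l$, and a short calculation then shows that the expected probability of a uniform chain $\1\to v$ hitting $A_{t-1}^{>k}$ is
\[
\sum_{l>k}\frac{a_l(t-1)}{\binom{n}{l}}.
\]
I control the $a_l$'s by an inductive bookkeeping argument: strengthening the inductive hypothesis to additionally assert that each previous phase ``filling level $k'$'' takes at most $N_{k'}=O(\binom{n}{k'}K\log n)$ walks w.h.p., one obtains $a_l(T)=O(\binom{n}{k-1}K\log n)$ for $l\ge k$. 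In the current phase, $a_l$ grows by at most one per walk, so $a_l(t)\le O(\binom{n}{k}K\log n)$ throughout a phase of length $N=\Theta(K\binom{n}{k}\log n)$. The sum above is dominated by its $l=k+1$ term and equals $O(Kk\log n/n)$, which is at most $1/150$ precisely when $k\le n/(150K\log n)$---this is the source of the range in the theorem.

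Therefore each walk after $T$ aggregates a given missing $v\in\cL_k$ with conditional probability at least $149/(150\binom{n}{k})$, so over $N$ walks the probability $v$ is never aggregated is at most $n^{-K-1}$. Hence $\E|\cL_k\setminus C_{\tend}|\le n^{-K-1}|\cL_k|$; Markov's inequality then gives $|\cL_k\setminus C_{\tend}|\le n^{-K}|\cL_k|$ with probability $1-o(1/n)$, enough for a union bound over $k$. It remains to verify that $N$ walks occur past $T$ before termination, which holds because termination requires a level-$(n-1)$ vertex, and the cluster must first ``climb'' through levels $k,k+1,\dots,n-1$---a process requiring vastly more than $N$ walks. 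The main technical obstacle is the careful bookkeeping of the $a_l(t)$'s under the process's dependencies: the symmetry calculation yields only expected hitting probabilities, so controlling atypical realizations of $A_{t-1}^{>k}$ requires an additional layer of concentration applied to the monotone processes $a_l(t)$, which follows from standard martingale arguments.
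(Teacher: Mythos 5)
Your high-level plan resembles the paper's: once level $k-1$ is nearly full, a walk through a fixed $v\in\cL_k$ should rarely be blocked above and should usually find an occupied down-neighbor, so $v$ is unlikely to be missed over a $\Theta\big(K\binom{n}{k}\log n\big)$-step window. The essential gap is that your symmetry calculation controls only the \emph{average} (over $v$) blocking fraction, whereas bounding $\Pr(v\notin C_\tend)=\E\big[\prod_t(1-p_{v,t})\big]$ requires a uniform, high-probability, per-$v$ bound on the conditional deposition probabilities $p_{v,t}$. Jensen goes the wrong way: convexity of $e^{-x}$ gives $\E\big[\exp(-\sum_t p_{v,t})\big]\geq \exp\big(-\E\sum_t p_{v,t}\big)$, so a small expected blocking is compatible with the product staying near $1$ on a non-negligible event where occupied vertices happen to pile up above \emph{this particular} $v$. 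Concentration of the level totals $a_l(t)$ does not exclude this: the counts $a_l(t)$ can be sharply concentrated while the occupied set at level $l$ crowds into the up-set of one $v$. The analogous issue affects your down-neighborhood argument: level $k-1$ being $(1-n^{-K})$-full does not imply that a particular $v$'s $k$ down-neighbors are mostly occupied.

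The paper resolves both issues with Lemma~\ref{l.PhiUp}, which establishes $\Phi_{v,t}+\Upsilon_{v,t}<2\e$ \emph{simultaneously} for all $v\in\cL_k$ and all $t$ in the relevant window, with probability $1-o(1/n)$; the proof of Theorem~\ref{t.dense} then conditions on this event, bounds $\E(|\Lambda_k|\mid\Psi\le 2\e)\lesssim\binom{n}{k}e^{-\omega_{k,\e}/3}$, and applies Markov. Lemma~\ref{l.PhiUp} is the technical heart of the section and is not a routine concentration statement about the $a_l$'s: the $\Phi$ part (Lemma~\ref{l.PhiEp}) requires a preliminary height bound on the cluster and a Bernstein-type inequality (Lemma~\ref{l.conc}) tuned to the regime $E\ll C$ (each walk contributes at most $O(1/(n-k))$ to $\Xi_{v,t}$, but the expected contribution is only $O\big(1/\binom{n}{k+1}\big)$, so Hoeffding is useless), and the $\Upsilon$ part is handled by an explicit union bound over all $v$ and all candidate sets of unoccupied down-neighbors, proved inductively in $k$. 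Your proposal, as written, proves an expectation statement where a high-probability statement about the supremum over $v$ is required; the "additional layer of concentration applied to the monotone processes $a_l(t)$" you invoke would not close this gap. A minor structural difference: the paper works at deterministic times $\tau_{k,\e}=\frac{\e}{4}\binom{n}{k+1}$ rather than random stopping times $T_{k-1}$, which avoids the extra bookkeeping of controlling phase lengths.
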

\begin{theorem}\label{t.most}
  For all $\e\leq\tfrac{1}{100}$, we have w.h.p.~that for all $k<\e^3 n$,
  \[
  |\cL_k\cap C_{\tend}|\geq \brac{1-2\e}\cdot |\cL_k|.
  \]
\end{theorem}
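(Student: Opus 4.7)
The plan is to track, for each $k$, how level $k$ fills up during the process, using Theorem~\ref{t.dense} as a foundation. Choose $K = K(\e)$ large enough that Theorem~\ref{t.dense} gives density $\geq 1 - n^{-K}$ at all levels $j \leq k_1 := n/(150K\log n)$ w.h.p. For each $k$ with $k_1 < k \leq \e^3 n$, I would study the random variable $X_k(t) := |\cL_k \cap C_t|$ and show $X_k(\tend) \geq (1 - 2\e)\binom{n}{k}$ w.h.p.

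\textbf{Filling dynamics at level $k$.} The walk $\r_{t+1}$ adds a vertex to $\cL_k$ iff it stops at level $k-1$, which requires three simultaneous conditions on the walk: (i) its level-$(k-1)$ position lies in $C_t$; (ii) its level-$k$ position does not; and (iii) its positions above level $k$ do not. Since the walk's level-$k$ position is marginally uniform on $\cL_k$, condition (ii) has probability exactly $1 - X_k(t)/\binom{n}{k}$; (i) is close to $1$ once level $k-1$ is nearly full; and (iii) is close to $1$ as long as the cluster above level $k$ remains sparse. In this regime, $\Pr[X_k(t+1) - X_k(t) = 1 \mid C_t] \approx 1 - X_k(t)/\binom{n}{k}$, and an urn-type argument drives $X_k$ toward $\binom{n}{k}$.

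\textbf{Role of the exponent $\e^3 n$.} The upper bound $k \leq \e^3 n$ is dictated by balancing obstructions: once the cluster contains vertices at levels above $k$, walks whose level-$(k+\ell)$ position lies in $C_t$ stop above level $k$ and no longer contribute to $X_k$. The range $\e^3 n$ is chosen so that in this regime, the cluster at levels just above $k$ stays sparse long enough that a $(1-2\e)$-fraction of walks still stop at level $k-1$ while $X_k < (1-2\e)\binom{n}{k}$. A Chernoff-type concentration for $X_k$, combined with a union bound over all $k \leq \e^3 n$, then yields the theorem.

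\textbf{Main obstacle.} The central technical difficulty is making condition (iii) quantitative, namely controlling how fast the cluster grows upward past level $k$ and showing that this upward spread is slow enough that $\cL_k$ fills to $(1-2\e)$-density before obstruction by higher-level cluster vertices becomes dominant. A clean route forward is to proceed by reverse induction on $k$ within the range $k \leq \e^3 n$, or to bound the number of walks ``wasted'' above level $k$ by comparing to the total cluster size below level $k$ (which Theorems~\ref{t.full} and~\ref{t.dense} already control). Pairing these estimates with the dynamics above should complete the proof.
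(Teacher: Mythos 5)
Your approach is genuinely different from the paper's, but it has a gap exactly where you flag one. The paper does not track the level occupancy $|\cL_k\cap C_t|$ dynamically at all. Instead it proves a strong \emph{per-vertex} structural lemma (Lemma~\ref{l.PhiUp}): with probability $1-o(1/n)$, for all $v\in\cL_k$ and all $t\in[\t_{k-1,\e},\tk]$, we have $\Phi_{v,t}+\Up_{v,t}<2\e$, where $\Up_{v,t}$ is the fraction of $v$'s down-neighbors that are still unoccupied and $\Phi_{v,t}$ is the fraction of monotone paths from $\1$ to $v$ already blocked by the cluster. Given this, Theorem~\ref{t.most} is a one-line double count: every $w\in\cL_{k+1}$ has at least $(1-2\e)(k+1)$ occupied down-neighbors in $\cL_k$, so counting edges between the two levels forces at least $(1-2\e)\binom{n}{k}$ vertices of $\cL_k$ to be occupied. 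No urn process or Chernoff bound on $X_k(t)$ enters the argument.

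The gap in your plan is the control of your condition (iii). You propose to bound the aggregate number of walks ``wasted'' above level $k$, but an aggregate bound does not suffice: the danger is not that many particles sit above level $k$, but that the ones that do could be concentrated above a single unlucky $v\in\cL_k$, shading most of $v$'s paths to $\1$ even though the total count above level $k$ is small. What is needed is a per-vertex bound on $\Phi_{v,t}$, which the paper obtains from a separate height bound on the cluster at time $\tk$ plus a Bernstein-type concentration argument (Lemma~\ref{l.PhiEp}). Relatedly, the induction in Lemma~\ref{l.PhiUp} runs \emph{upward} in $k$ (the step for level $k$ conditions on the event $\cA_{k-1,\e,n}$ at level $k-1$), since level $k$ can only fill after level $k-1$ has; a ``reverse induction'' would run the wrong way. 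Finally, Theorem~\ref{t.dense}, which you invoke as a foundation, is itself a corollary of the same Lemma~\ref{l.PhiUp} in the paper, so leaning on it does not supply the missing $\Phi$ control; the joint $(\Phi,\Up)$ induction is precisely the content your plan lacks.
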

\begin{theorem}\label{t.notall}
There is a constant $c_1$ such that for all $\e<\tfrac{1}{100}$, we have w.h.p.~that for all $$k_1=\sqrt{\tfrac{c_1n}{\log n}}\leq k<\e^3n,$$
we have
  \[
  |\cL_k\stm C_{\tend}|\geq \bfrac{(1-\r)e^{-\frac{10n}{9k}}}{2}\cdot |\cL_k|,
  \]
where 
$$\r=\max\brac{1-\bfrac {k+1}{en}^{100},1-\bfrac{1}{10e}^{10}}.$$
\end{theorem}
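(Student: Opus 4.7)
My plan is to lower bound $\Pr[v \notin C_{\tend}]$ pointwise for each $v \in \cL_k$, and then upgrade this bound to the claimed w.h.p.~statement via a second-moment argument. The central structural tool is the bijection between random decreasing walks from $\1$ and uniformly random permutations of $[n]$: if walk $\r_t$ corresponds to the permutation $\pi_t$ (flipping coordinate $\pi_t(i)$ at step $i$), then $\r_t$ passes through a given $v \in \cL_k$ with support $S$ at step $n-k$ iff $\{\pi_t(1),\ldots,\pi_t(n-k)\}=[n]\setminus S$, which has probability $1/\binom{n}{k}$. Conditional on this, the ``above-$v$'' part of $\r_t$ (a permutation of $[n]\setminus S$) and the ``below-$v$'' part (a permutation of $S$) are independent and uniform --- this is the engine of the argument.

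For $\r_t$ to add $v$ to the cluster, three events must hold: $v \notin C_{t-1}$; the walk reaches $v$ (its above-$v$ path avoids $C_{t-1}$); and the walk's next vertex $w_{k-1}$ (uniform over the $k$ level-$(k-1)$ neighbors of $v$) lies in $C_{t-1}$. By the independence above, the reach event and the next-vertex event are conditionally independent given $C_{t-1}$. I would introduce a favorable event $\mathcal{G}_v$ of probability at least $1-\rho$, controlling: (i) the total number $R_v$ of walks reaching $v$ is small enough that $(1-1/k)^{R_v} \geq e^{-10n/(9k)}$, and (ii) the number $M_t$ of $v$'s level-$(k-1)$ neighbors already in $C_{t-1}$ stays bounded (say, by $1$) throughout. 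On $\mathcal{G}_v$, the conditional independence of the below-$v$ parts of different walks, combined with the bound on $M_t$, yields
\[
\Pr[v\notin C_{\tend}\mid \mathcal{G}_v] \geq \prod_t\brac{1 - \frac{M_t}{k}} \geq \brac{1 - \frac{1}{k}}^{R_v} \geq e^{-10n/(9k)}.
\]
The specific polynomial form $(1-\rho) = ((k+1)/(en))^{100}$ should emerge from tail bounds on $R_v$ and on the growth of $M_t$, invoking Theorems~\ref{t.full}, \ref{t.dense}, and~\ref{t.most} to control the cluster density at levels $\geq k$ (which governs reachability of $v$) and around $v$'s level-$(k-1)$ neighborhood.

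To upgrade the expectation bound $\E|\cL_k\setminus C_{\tend}|\geq (1-\rho)e^{-10n/(9k)}|\cL_k|$ to the claimed w.h.p.~statement with factor $1/2$, I would use a second-moment argument. For pairs $v, v' \in \cL_k$ with small symmetric difference in their supports, the events $\{v\notin C_{\tend}\}$ and $\{v'\notin C_{\tend}\}$ should be approximately independent, since they depend primarily on the independent below-$v$ and below-$v'$ parts of different walks. The chief obstacle throughout is the feedback between walks through $v$ and $M_t$: walks that pass through $v$ without sticking at $v$ may nevertheless stick at one of $v$'s level-$(k-1)$ neighbors, growing $M_t$ and weakening the per-walk bound. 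Handling this cleanly requires a careful decoupling of the above-$v$ and below-$v$ randomness --- for instance by revealing the process in two phases, first exposing all walks' above-$v$ data (which determines reachability and the cluster away from $v$), then treating the below-$v$ parts of walks reaching $v$ as fresh independent uniform data --- and verifying that $\mathcal{G}_v$ can be chosen measurable in the first phase.
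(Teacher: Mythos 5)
There is a genuine gap, and it sits at the heart of your plan: the mechanism you propose for a vertex $v\in\cL_k$ to survive is that walks keep reaching $v$ but slip past it, which forces your good event $\mathcal{G}_v$ to contain ``$M_t\le 1$ throughout,'' i.e.\ that all but one of $v$'s $k$ down-neighbours stay unoccupied for the whole process. That event is essentially impossible, not of probability $\ge 1-\rho$: level $k-1$ fills to density $1-2\e$ w.h.p., and in fact Lemma \ref{l.PhiUp} gives $\Up_{v,t}<2\e$ for \emph{every} $v\in\cL_k$ from time $\t_{k-1,\e}$ onward, so $M_t$ climbs to at least $(1-2\e)k\gg 1$ regardless of what happens at $v$; your event would require roughly $k-1$ vertices of $N^-_v$ to survive the entire process, which is heuristically of order $\brac{e^{-\Theta(n/k)}}^{k}=e^{-\Theta(n)}$ --- far smaller than the bound $(1-\r)e^{-10n/(9k)}$ you are trying to establish, so the decomposition $\Pr(v\notin C_{\tend})\ge\Pr(\mathcal{G}_v)\,e^{-10n/(9k)}$ cannot work. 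The factor $e^{-10n/(9k)}$ actually has a completely different origin. The paper stops at the time $t_k^{\rho}$ at which a $\rho$ fraction of $\cL_k$ is occupied (so there are deterministically $(1-\rho)\binom{n}{k}$ unoccupied candidates $v$: the factor $1-\rho$ is a density, not the probability of a per-vertex good event), shows via Lemma \ref{l.rho} that at that time every $w\in\cL_{k+1}$ has at least $\tfrac{9}{10}k$ occupied down-neighbours, and then argues that $v$ survives because each of its $n-k$ up-neighbours $w$ gets sealed before routing a particle to $v$: the first walk after $t_k^{\rho}$ that descends from $w$ into $O_k\cup\{v\}$ chooses $v$ rather than one of the $\ge\tfrac{9}{10}k$ occupied options with probability at most $\tfrac{10}{9k}$, and these events are conditionally independent across the $w$'s, giving $\brac{1-\tfrac{10}{9k}}^{n-k}\ge e^{-10n/(9k)}$. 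This up-neighbour race is the key idea of the proof and is absent from your proposal; your permutation/decoupling observations about above-$v$ and below-$v$ randomness are sound but do not substitute for it.

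Two secondary points. First, your appeal to Theorems \ref{t.full} and \ref{t.dense} for density control is unavailable in most of the range $k<\e^3 n$ (they cover only $k\lesssim\sqrt{n/\log n}$ and $k\lesssim n/\log n$, and they concern $C_{\tend}$ rather than intermediate times); the statements one can actually use are Lemmas \ref{l.PhiUp} and \ref{l.rho}. Second, for the final concentration step the paper does not use a second-moment computation over nearby pairs: it observes that the count $Z_k$ of surviving candidates is a function of at most $(k+1)\binom{n}{k+1}$ independent choices (partition the walks by their first vertex in $\cL_{k+1}$, together with the subsequent level-$k$ choice), each change of which moves $Z_k$ by at most one, and applies McDiarmid's inequality; this is where the lower cutoff $k\ge k_1=\sqrt{c_1 n/\log n}$ enters, since one needs $\brac{(1-\rho)e^{-10n/(9k)}\binom{n}{k}}^2$ to beat $(k+1)\binom{n}{k+1}$. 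If you pursue a second-moment route you would still need to quantify the pairwise correlations well enough to reproduce this trade-off, which your sketch does not yet do.
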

Thus Theorems \ref{t.full}, \ref{t.dense}, and \ref{t.most} provide progressively weaker statements as $k$ increases about the fullness of the level $\cL_k$ at the end of the process; Theorem \ref{t.notall} shows that Theorems \ref{t.full}, \ref{t.dense} and \ref{t.most} are qualitatively best-possible.  A key contrast from classical DLA is that the process does ``fill'' parts of the cube, and moreover, that this can be proved. Note also that the boundary between full and not full levels occurs w.h.p. at around $k=\sqrt{\tfrac{n}{\log n}}$.

A striking (unproved) feature of the classical DLA processes is a rich-get-richer phenomenon, where long arms of the process seem to grow at a rate significantly faster than $t^{1/d}$.  In the Boolean lattice, we observe an extreme version of this kind of runaway growth:
\begin{theorem}\label{t.path}
If $a<\frac12$ then w.h.p.~for all $k\geq n-n^a$ we have that
  \[
|\cL_k\cap C_\tend|=1.
  \]
\end{theorem}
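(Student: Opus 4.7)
The plan is to show that the cluster vertices at levels $\geq n - n^a$ form a single ascending chain from a unique ``seed'' vertex at level $n - n^a$ up to $\1$. First, observe that the very first vertex $v^{(0)}$ ever added at any level $\geq n - n^a$ must lie at level exactly $n - n^a$: a walk adding a vertex at level $k$ enters $C_{t-1}$ at level $k-1$, but before $v^{(0)}$ is added no cluster vertex exists at level $\geq n - n^a$, forcing $k-1 < n - n^a$ and hence $k = n - n^a$. I then argue inductively that, as long as no ``branching'' event (defined below) has occurred, the portion of the cluster at levels $\geq n - n^a$ is a chain $v^{(0)}, v^{(1)}, \ldots, v^{(m)}$ with $v^{(j)} \in \cL_{n - n^a + j}$ and $v^{(j+1)}$ obtained from $v^{(j)}$ by flipping one coordinate from $0$ to $1$. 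Each subsequent walk $\r_t$ then falls into exactly one of three categories: \emph{extending} ($\r_t$ passes through $v^{(m)}$, adding a new parent $v^{(m+1)}$), \emph{branching} ($\r_t$ passes through $v^{(j)}$ for some $j < m$ but not through any $v^{(j')}$ with $j' > j$, thereby stopping at $v^{(j)}$ and adding a vertex at level $n - n^a + j + 1$ distinct from $v^{(j+1)}$), or missing the high cluster entirely.

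The probability calculations rest on the fact that the walk's vertex at any level $k$ is uniformly distributed in $\cL_k$, so $\Pr(\r_t \text{ passes through } v^{(j)}) = 1/\binom{n}{n^a - j}$. The extension probability at chain height $m$ is therefore $p_m := 1/\binom{n}{n^a - m}$, and by a union bound on passage through $v^{(0)}, \ldots, v^{(m-1)}$ the branching probability is at most $\sum_{j=0}^{m-1} 1/\binom{n}{n^a - j}$. Since consecutive summands satisfy $\binom{n}{k-1}/\binom{n}{k} = k/(n - k + 1) = O(n^{a-1})$ for $k \leq n^a$, this sum is geometrically dominated by its largest ($j = m-1$) term and is at most $2/\binom{n}{n^a - m + 1}$. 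Because the waiting time for the next extension is geometric with mean $1/p_m = \binom{n}{n^a - m}$, the expected number of branches during the time the chain has height $m$ is bounded by $\binom{n}{n^a - m}\cdot 2/\binom{n}{n^a - m + 1} = 2(n^a - m + 1)/(n - n^a + m) = O(n^{a-1})$.

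Summing over $m = 0, 1, \ldots, n^a - 1$ (the process terminates when $v^{(n^a)} = \1$ is added), the expected total number of branching events is $O(n^a \cdot n^{a-1}) = O(n^{2a-1}) = o(1)$ for $a < \tfrac12$, and Markov's inequality gives w.h.p.\ no branching. Thus the cluster portion at levels $\geq n - n^a$ is exactly the chain $v^{(0)}, \ldots, v^{(n^a)} = \1$, and $|\cL_k \cap C_{\tend}| = 1$ for all $k \geq n - n^a$. The main technical subtlety is the dependence between extension and branch events across different chain heights, which is handled cleanly by a coupling argument: introduce the first-branch stopping time $\tau^* = \inf\{t : \text{walk } t \text{ branches}\}$, and bound $\Pr(\tau^* < \tend)$ by $\sum_t \Pr(\text{walk } t \text{ branches} \mid \text{cluster at time } t - 1 \text{ is a chain})$, which recovers precisely the expected-branches estimate above.
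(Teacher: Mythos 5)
There is a genuine gap, and it is exactly the difficulty that forces the paper's long argument. Your trichotomy (extend / branch / miss) assumes that the only way a new vertex can appear at a level $\geq n-n^a$ is via a walk that passes through one of the existing chain vertices $v^{(0)},\dots,v^{(m)}$. But the cluster below level $n-n^a$ keeps growing throughout, and any walk that first collides with an occupied vertex at level $n-n^a-1$ (having avoided everything above) deposits a \emph{new} vertex at level $n-n^a$ without passing through any $v^{(j)}$ — this is precisely how $v^{(0)}$ itself was created, and nothing stops it from happening again. The rate of such ``new seed'' events is of order $|\cO_{n-n^a-1,t}|/\binom{n}{n-n^a-1}$ per step, and since merely extending the chain once from $v^{(0)}$ already takes expected time $\binom{n}{n^a}$, the occupancy of level $n-n^a-1$ (and then of $n-n^a$, $n-n^a+1$, \dots) becomes large long before the chain reaches $\1$. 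So your claim that, absent branching, the cluster at levels $\geq n-n^a$ stays a single chain rooted at the first arrival is false; your conditional/stopping-time remark does not repair it, because conditioning on the high part being a chain gives no control over the occupancy one level below it. (Your $O(n^{2a-1})$ estimate does correctly capture the \emph{second} failure mode, branching off the already-isolated path; it corresponds to the second term in the paper's final bound \eqref{last}.)

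This missing mechanism is why the paper cannot start the isolated path at level $n-\ell$ with $\ell=n^a$: it runs the process to a stopping time $\t_0$ at which the occupancies of levels $k+j$, $j<j_0$, are controlled, then uses the recurrences for $X_{j,t}$ and $Y_{j,t}$ together with concentration to show that level $k+j_0$ (with $j_0\approx\sqrt{2\ell}$) receives its first particle while level $k+j_0-1$ still has only about $n^b\log n$ occupied vertices; only from level $k+j_0$ upward is the single isolated path shown to grow, and the first term of \eqref{last}, $2\om_1\binom{n}{k+j_0}n^b\log n/\binom{n}{k+j_0-1}$, is exactly the bound on your missing ``new seed at the base level'' events. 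The theorem for a given $a<\tfrac12$ is then recovered by running this argument with a slightly larger internal exponent, so that level $n-n^a$ lies above $k+j_0$. Without an argument bounding the occupancy of the level just below where your path starts (which is the bulk of the paper's work), your proof does not go through.
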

Recall that our DLA process on $\cB$ ends once \1 is occupied; Theorem \ref{t.path} implies that $\1$ becomes occupied as the terminal vertex on an isolated path of occupied vertices whose length is at least $n^a$. 

\subsection*{Notation}
In what follows we use the notation $A_n\approx B_n$ to mean that $A_n=(1+o(1))B_n$ as $n\to\infty$ and $A_n\lesssim B_n$ to mean that $A_n\leq (1+o(1))B_n$ as $n\to\infty$; we write $A_n\apb B_n$ to mean that $A_n/B_n$ is bounded above and below by positive absolute constants as $n\to\infty$.  In some places we give expressions for integer quantities that may not be integer; in cases where we do this, it does not matter whether we round up or down.

\section{Lower levels}
In this section, we prove Theorems \ref{t.full}, \ref{t.dense}, \ref{t.most} and \ref{t.notall}. We define
\begin{equation}\label{Tcond}
\tk=\frac \e 4\binom{n}{k+1}=\omk\binom{n}{k}\quad\text{for}\quad\omk=\frac \e 4 \frac{n-k}{k+1}.
\end{equation}

Roughly speaking, we expect that at time $\tk$, the level $\cL_k$ is mostly full, while higher levels are empty enough to have little effect on the process at this time.  We will prove a sequence of lemmas confirming this general picture.  First, we establish an upper bound on the height of the cluster at a time $\tk$:

\begin{lemma}Let $\phi=1+\sqrt 3$ and $0<\e<1$.  If $k<\tfrac{n}{(1+\phi)e^\f}$, then, for all $\delta>0$, we have with probability $1-o(n^{-1})$ that
  \[
\cL_j\cap C_{\tk}=\varnothing\text{ for all }j\geq (1+\phi+\delta)k. 
  \]
\end{lemma}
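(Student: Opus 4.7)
The plan is to control $\E[|C_\tau\cap\cL_\ell|]$ for $\tau:=\tk$ and each $\ell\geq j^*:=(1+\phi+\delta)k$, show this expectation is $o(n^{-2})$, and then conclude via Markov's inequality combined with a union bound over the at most $n$ relevant values of $\ell$. The key observation is that a particle can stick at level $\ell$ only if its random decreasing walk from $\1$ first meets $C_{t-1}$ at a vertex of level $\ell-1$, which in particular forces the walk's (uniformly random) level-$(\ell-1)$ vertex to lie in $C_{t-1}$; this gives
\[
\Pr\bigl[\text{particle $t$ sticks at level $\ell$}\mid C_{t-1}\bigr]\leq\frac{|C_{t-1}\cap\cL_{\ell-1}|}{\binom{n}{\ell-1}}.
\]

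Writing $f_\ell(t):=\E[|C_t\cap\cL_\ell|]$ and summing over $s\leq t$ gives the recursion
\[
f_\ell(t)\leq\frac{1}{\binom{n}{\ell-1}}\sum_{s=1}^{t}f_{\ell-1}(s-1),
\]
with trivial base case $f_0\equiv 1$; iterating produces the closed-form bound $f_\ell(t)\leq t^\ell\bigm/\bigl(\ell!\prod_{j=0}^{\ell-1}\binom{n}{j}\bigr)$.

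I would then substitute $t=\tau\apb\binom{n}{k+1}$ and $\ell=(1+\phi+\delta)k$ and expand $\log f_\ell(\tau)$ using the sharp asymptotic $\log\binom{n}{j}=j\log(en/j)+O(j^2/n)$ (valid for $j=o(n)$) together with Stirling's formula for $\ell!$. Writing $\alpha:=\ell/k$ and $L:=\log(en/k)$, the dominant terms combine into an expression of the form $k^2L\cdot F(\alpha)+$ lower-order, for an explicit function $F$; the particular constant $\phi=1+\sqrt 3$ and the hypothesis $k<n/((1+\phi)e^\phi)$ (equivalent to $L>\phi+\log(1+\phi)$) are precisely what force $F(\alpha)$ to be sufficiently negative at $\alpha=1+\phi$, and the additional slack from $\delta>0$ then yields $\log f_\ell(\tau)\leq -2\log n-\omega(1)$ in the regime of interest.

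Finally, Markov's inequality gives $\Pr[|C_\tau\cap\cL_\ell|\geq 1]\leq f_\ell(\tau)$ for each $\ell\geq j^*$, and since the ratio $f_{\ell+1}(\tau)/f_\ell(\tau)\leq\tau/((\ell+1)\binom{n}{\ell})$ is strictly less than $1$ once $\ell\geq k+1$, a union bound over $\ell\in[j^*,n]$ costs only a factor $n$ and produces the claimed $1-o(n^{-1})$ probability. The main obstacle will be the careful bookkeeping in the asymptotic expansion of $\log f_\ell(\tau)$ needed to pin down the constant $\phi=1+\sqrt 3$ and confirm that the hypothesis on $k$ is the right threshold; the probabilistic ingredients are a routine first-moment argument.
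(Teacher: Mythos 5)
Your outline is sound and the probabilistic skeleton (the sticking inequality, the level recursion, the closed form $f_\ell(t)\leq t^\ell/(\ell!\prod_{j<\ell}\binom{n}{j})$, Markov) is correct, but it is organized differently from the paper and its quantitative content is not the same. The paper fixes a target vertex $v\in\cL_{k+s}$ and observes that its occupation forces a chain of $s$ earlier arrival times whose walks pass through prescribed vertices at levels $k+1,\dots,k+s$, giving the per-vertex bound $\binom{\tk}{s}\prod_{i=1}^{s}\binom{n}{k+i}^{-1}$, then multiplies by $\binom{n}{k+s}$; the constant $\f=1+\sqrt3$ arises from optimizing exactly that expression over $s/k$. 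Your recursion instead charges the full chain from level $1$ upward, i.e. $f_\ell(\tk)\lesssim\binom{\tk}{\ell}\prod_{j<\ell}\binom{n}{j}^{-1}$, which is a strictly weaker bound: for the $\sim k$ chain steps below level $k$ the extra factors $\binom{n}{j}^{-1}$ do not pay for the extra freedom in choosing times (a net loss of $\exp\brac{\Theta(k^2\log(n/k))}$), because the recursion ignores the saturation of the low levels that the paper's truncation at level $k$ exploits implicitly. Consequently the "careful bookkeeping" you defer will \emph{not} pin down $\f$: at $\ell=\lambda k$ your exponent is roughly $k^2\bigl[\lambda\log\tfrac{en}{k}-\tfrac{\lambda^2}{2}\log\tfrac{en}{\ell}-\tfrac{\lambda^2}{4}\bigr]$, whose zero set is a different curve from the paper's (it already goes negative a bit above $\lambda=2$ when $\log(n/k)$ is moderately large). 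What saves your plan is slack, not tightness: the hypothesis $k<n/((1+\f)e^{\f})$ gives $\log(n/k)>\f+\log(1+\f)\approx 4.05$, and at $\lambda=1+\f+\delta$ your exponent is then negative with a constant margin per $k^2$, so the first-moment conclusion does follow in the regime where the paper's own asymptotics are carried out ($k=\Theta(n)$, and more generally whenever $k^2\log(n/k)\gg\log n$ so that the continuum approximations and the $o(n^{-2})$ target make sense); like the paper's proof, your argument says nothing reliable for very small $k$.

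One concrete step needs repair: the claim that $f_{\ell+1}(\tk)/f_\ell(\tk)\leq \tk/\bigl((\ell+1)\binom{n}{\ell}\bigr)<1$ for all $\ell\geq k+1$ fails once $\ell$ is close to $n$, where $\binom{n}{\ell}\ll\binom{n}{k+1}$. The union bound survives anyway: either check directly that the exponent stays very negative out to $\ell=n$ (it does, since $\ell\log\tk\lesssim \tfrac{k}{n}\log\tfrac{en}{k}\cdot n^2<\tfrac12 n^2\approx\sum_{j<n}\log\binom{n}{j}$ under the hypothesis), or, more simply, note that an occupied vertex at level $j\geq1$ forces an occupied vertex at level $j-1$ at an earlier time, so the occupied levels always form an initial segment and it suffices to control the single level $\ell=\lceil(1+\f+\delta)k\rceil$.
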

\begin{proof}
Consider a fixed vertex $v$ in $\cL_{k+s}$.   If it becomes occupied by time $\tk$, then there is a sequence of times $t_{k+1}<t_{k+2}<\dots<t_{k+s}\leq \tk$ such that 

\beq{ref}{
\r_{t_s}\cap \cL_{k+s}=\{v\}\text{ and }\r_{t_i}\cap \cL_{k+i-1}=\r_{t_{i-1}}\cap \cL_{k+i-1}
}

for $i=1,\dots,s$ .

  By considering the $\binom{\tk}{s}$ possible choices of the times $t_{k+1}<\dots<t_{k+s}
\leq \tk$, the probability that each $\r_{t_i}$ satisfies the intersection conditions \eqref{ref} for $i=s,s-1,\dots$, we have that 
\begin{align}
\Pr(v\text{ is occupied at time } \tk)&\leq \binom{\omk \binom{n}{k}}{s}\prod_{i=1}^s\frac 1 {\binom{n}{k+i}}\label{vocc}\\
&\leq \bfrac{\omk\bfrac{ne}{k}^ke}{s}^s\prod_{i=1}^s\frac{(k+i)^{k+i}}{n^{k+i}}\\
&=\omk^s n^{sk-sk-s(s+1)/2}\bfrac{e^{k+1}}{k^ks}^s\prod_{i=1}^s(k+i)^{k+i}\\
&\leq\omk^s n^{-s(s+1)/2}\bfrac{e^{k+1}}{k^ks}^s(k+s)^{s(k+(s+1)/2)}\\
&=\bfrac{\omk}{s}^sn^{-s(s+1)/2}e^{(k+1)s}\brac{1+\frac{s}{k}}^{ks}(k+s)^{s(s+1)/2}\\
&\leq \bfrac{\omk}{s}^s\brac{\frac{(k+s)^{1/2}\cdot e^{1+k/(s+1)}} {n^{1/2}}}^{s(s+1)}.
\end{align}

So, multiplying by $\binom{n}{k+s}$ we see that
\begin{align}
&\Pr(\exists v\in \cL_{k+s}:v\text{ is occupied at time }\tk)\\
&\leq\binom{n}{k+s} \bfrac{\omk}{s}^s\brac{\frac{(k+s)^{1/2}\cdot e^{k/(s+1)+1}} {n^{1/2}}}^{s(s+1)}\label{anyvocc}\\
&\leq \brac{\frac{\omk}{s} \bfrac{k+s}{n}^{(s+1)/2-1-k/s}e^{2+k/s+k+s }}^s.
\end{align}
Suppose now that $k=\a n,s=\b n\geq 1$. Then the above  expression becomes
$$\brac{(\a+\b)^{\b/2} e^{\a+\b} \brac{\omk s^{-1}n^{1/n}(\a+\b)^{-1/2-\a/\b}e^{2+\a/\b}}^{1/n}}^{\b n^2}.$$
We insist that $\b\geq \a$, in which case
\beq{ass1}{
\brac{\omk s^{-1} n^{1/n}(\a+\b)^{-1/2-\a/\b}e^{2+\a/\b+\a+\b}}^{1/n}\leq 1+o(1),
} 
which implies that the expression in \eqref{anyvocc} is $o(n^{-1})$ so long as $(\a+\b)^{\b/2} e^{\a+\b}<1$.

Let $\b=\g\a$ ($\g\geq 1$). Then our requirement is that $(\a(1+\g))^{\g/2}e^{1+\g}<1$ or $\a< \frac{1}{e^{2+2/\g}(1+\g)}$. Now $e^{2+2/\g}(1+\g)$ is minimized at the solution to $\g^2=2(\g+1)$, which is $\f=1+3^{1/2}$. In summary, if $\a< \frac{1}{(1+\f)e^\f}$ then with probability $1-o(n^{-1})$ all levels above $\a(1+\f+o(1))n$ are empty at time $\omk\binom{n}{\a n}$, which gives the Lemma.
\end{proof}

Now we define $\Phi_{v,t}$ to be the fraction of (monotone) paths between $\1$ and $v$ which have at least one occupied vertex other than $v$ at time $t$.  The following Lemma implies that levels above $\cL_k$ play a small role when analyzing level $\cL_k$ at time $\tk$.
\begin{lemma}\label{l.PhiEp} For all fixed $\e>0$ and all  $\a=\frac{k}{n}<\min(\tfrac{\e^2}4,\tfrac 1 {100})$, we have
\begin{equation}\label{PhiEp}
  \Pr\brac{\exists v\in \cL_k\text{ s.t. }\Phi_{v,\tk}\geq \e}=o\brac{\tfrac 1 n}.
\end{equation}
\end{lemma}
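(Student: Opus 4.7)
The plan is to decompose $\Phi_{v,\tk}$ into (nearly) independent contributions from the $\tk$ random walks, apply Bernstein's inequality for each $v\in\cL_k$, and close by a union bound. The preceding lemma is used to truncate the decomposition so that each walk's contribution is genuinely small.

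Fix a small $\delta>0$ and put $K=(1+\phi+\delta)k$; by the preceding lemma we may condition on the event $\{C_{\tk}\subseteq \bigcup_{j\leq K}\cL_j\}$, which holds with probability $1-o(1/n)$. On this event, any occupied ancestor $u\supsetneq v$ lies at some level $k+s$ with $1\leq s\leq K-k$, and contributes a $1/\binom{n-k}{s}$ fraction of monotone paths from $\1$ to $v$. Hence
\[
\Phi_{v,\tk}\leq \sum_{s=1}^{K-k}X_{v,s}/\binom{n-k}{s},
\]
where $X_{v,s}$ counts occupied vertices at level $k+s$ containing $v$. Since each $\r_t$ adds at most one vertex to the cluster, this is in turn bounded by $\sum_{t=1}^{\tk}U_{v,t}$, where $U_{v,t}=1/\binom{n-k}{s_t}$ if $\r_t$ stops above $v$ at level $k+s_t\leq K$, and $U_{v,t}=0$ otherwise.

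Since $U_{v,t}$ depends on $C_{t-1}$ the summands are not independent, so I pass to a walk-only majorant. Let $s_t^{*}=\min\{s\geq 0:\r_t\text{'s level-}(k+s)\text{ vertex contains }v\}$ (a function of $\r_t$ alone) and set $\widetilde Z_{v,t}=1/\binom{n-k}{\max(s_t^{*},1)}$ if $s_t^{*}\leq K-k$, and $0$ otherwise. Because $\binom{n-k}{s}$ is increasing on $[1,K-k]$ (using $K-k=(\phi+\delta)k\ll(n-k)/2$), any stop-level $s_t$ satisfies $s_t\geq s_t^{*}$ and hence $U_{v,t}\leq\widetilde Z_{v,t}$. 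The $\widetilde Z_{v,t}$ are i.i.d.\ in $t$ and bounded by $1/(n-k)$. Using $\Pr(s_t^{*}\leq s)=\binom{n-k}{s}/\binom{n}{k+s}$ and geometric decay in $s$, one obtains $\E\widetilde Z_{v,t}\leq (1+o(1))/\binom{n}{k+1}$ and $\E\widetilde Z_{v,t}^{\,2}\leq (1+o(1))/(\binom{n}{k+1}(n-k))$ (both dominated by the $s=1$ term), so $S_v:=\sum_t\widetilde Z_{v,t}$ has $\E S_v\leq (\e/4)(1+o(1))$ and $\Var S_v\leq \e(1+o(1))/(4(n-k))$.

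Bernstein's inequality applied with deviation $t=\e-\E S_v\gtrsim 3\e/4$, variance $\sigma^2\leq \e/(4(n-k))$, and almost-sure bound $M=1/(n-k)$ gives
\[
\Pr(S_v\geq\e)\leq \exp(-c\e(n-k))
\]
for an absolute $c>0$. A union bound over the $\binom{n}{k}\leq \exp(k\log(en/k))$ choices of $v$ completes the proof, provided $k\log(en/k)<(c-o(1))\e n$. The double hypothesis $\alpha=k/n<\min(\e^2/4,1/100)$ is exactly what guarantees this for all fixed $\e>0$: when $\e$ is bounded away from $0$ the $1/100$ branch gives a fixed absolute margin, while when $\e$ is small the $\e^2/4$ branch yields $\alpha\log(1/\alpha)=O(\e^2\log(1/\e))=o(\e)$. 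The principal obstacle is designing the majorant $\widetilde Z_{v,t}$ in Step 3 so that it is both small enough (in mean and variance) for Bernstein to deliver $\exp(-\Omega(\e n))$ and large enough to dominate $U_{v,t}$; the monotonicity of $\binom{n-k}{s}$ on $[1,K-k]$, combined with the height cap from the preceding lemma, is what makes this possible.
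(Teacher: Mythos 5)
Your proposal is correct and follows essentially the same route as the paper: the paper likewise truncates via the preceding height lemma, dominates each walk's blocking contribution by an i.i.d.\ variable $\xi(\zeta)\leq\tfrac{3}{2\binom{n-k}{\zeta}}$ determined by the lowest level at which the walk is still an ancestor of $v$ (your $\max(s_t^*,1)$), applies a Bernstein-type bound with mean $O\brac{1/\binom{n}{k+1}}$ and sup bound $O\brac{1/(n-k)}$, and union bounds over the $\binom{n}{k}$ vertices under $\a<\min(\e^2/4,1/100)$. The only cosmetic differences are that the paper charges a walk's entire trajectory above $v$ (summing $1/\binom{n-k}{i}$ over $i\geq\zeta$) rather than just the single deposited vertex, and invokes its Lemma~\ref{l.conc} in place of Bernstein's inequality directly.
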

\begin{proof}
Recall that the particle in $C_t\stm C_{t-1}$ is deposited by the decreasing walk $\r_t$.
  We fix a vertex $v\in \cL_k$, choose some $\l$ such that $k+\l\leq \tfrac n 2$, and define, for each $t=1,\dots,\tk$, a random variable $\xi_{v,t}\in [0,1]$ equal to the fraction of paths between $v$ and $\cL_{k+\l+1}$ whose interiors intersect the path $\r_t$.  Let $\cU_v$ denote the set of all $2^{n-k}$ ancestors $x>v$ of $v$.  Note that $\xi_{v,t}$ is determined by the minimum $\zeta\geq 1$ such that $\rho_t$ visits $\cL_{k+\zeta}\cap \cU_v$, and, with respect to this random variable $\zeta$, can be bounded by 
\begin{equation}\label{xifromzeta}
\xi_{v,t}\leq \xi(\zeta):=\begin{cases}{\displaystyle \sum_{i=\z}^{\l}\frac{1}{\binom{n-k}{i}}}\leq \frac{3}{2\binom{n-k}{\z}}& \zeta\leq \l\\
0 & \zeta>\l.
\end{cases}
\end{equation}

Moreover, we have for $s\leq k+\l$ that
\begin{equation}\label{zeta}
\Pr(\zeta=s)=\frac{\binom{n-k}{s}}{\binom n {k+s}}\cdot\frac{k}{k+s},
\end{equation}
since this is the probability that $\r_t$ visits $\cL_{k+s}\cap \cU_v$, and then on the next step, moves outside of $\cU_v$.  In particular, we have that
\mult{Expz}{
\E(\xi(\z))\leq\frac32\sum_{s=1}^\l \frac{\binom{n-k}{s}}{\binom{n}{k+s}}\frac k {k+s}\frac{1}{\binom{n-k}{s}}
\leq\frac32\sum_{s=1}^\l \frac{1}{\binom{n}{k+s}}\\
=\frac32
\bfrac{1+\sum_{s=2}^\l \frac{(k+2)\cdots(k+s)}{(n-k-1)\cdots(n-k-s+1)}}{\binom{n}{k+1}}
\leq
\frac 2 {\binom{n}{k+1}},
}
for $k+\l<\frac 1 2 (n-k-\l+1)$, or $k+\l<\tfrac{n+1}{3}$.

We will use the following concentration inequality for nonnegative and bounded independent random variables; we show in Appendix \ref{s.bern} that this is an easy consequence of Bernstein's inequality.
\begin{lemma}\label{l.conc}
    Let $X_1,\dots,X_{\colormark N}$ be independent random variables such that, for all $i$, $\E(X_i)\leq E$ and $X_i\in [0,C]$ almost surely. 
  Then for $S_{\colormark N}=\sum_{i=1}^{\colormark N} X_i$, $E_N=\E(S_N)$, and for all $t\leq NE$, we have that
  \[
\Pr\brac{|S_n-E_n|>t}<2\exp{\brac{-\frac{t^2/4}{NEC}}}.\qedhere
\]
\end{lemma}
Note that in the same situation, Hoeffding's inequality gives $2e^{-2t^2/NC^2}$, which, ignoring constant factors in the exponent, is always worse; the point is that we are interested in the case where $E\ll C$. And though we have stated the lemma here with the condition $t\leq NE$, one could drop it and still obtain the bound $2\exp{\brac{-\frac{t^2/2}{NEC+Ct}}}$, an analogous improvement over Hoeffding anytime $t\ll NC$.

To apply Lemma \ref{l.conc}, notice that from \eqref{xifromzeta} that $\xi(s)\leq \frac{3}{2(n-k)}$ always. If $\Xi_{v,T}=\sum_{t=1}^T \xi_{v,t}$ then $\Xi_{v,T}$ is stochastically dominated by {\colormark a} sum $Z_T=\xi(\z_1)+\xi(\z_2)+\cdots+\xi(\z_T)$ where each $\z_j$ is an independent copy of a random variable $\z$ satisfying \eqref{zeta}.  Now \eqref{Expz} implies that $T\E(\xi(\z_i))=\E(Z_T)\leq \frac{2T}{\binom{n}{k+1}}$, and thus Lemma \ref{l.conc} with $t=\th,N=\t_{k,\e},E\leq \frac{2}{\binom{n}{k+1}},C=\frac{3}{2(n-k)}$ gives that
\mult{VTcons}{
 \Pr\brac{\Xi_{v,\tk}\geq \frac{2\tk}{\binom{n}{k+1}}+\th}
\leq \Pr\brac{Z_{\tk}\geq \frac{2\tk}{\binom{n}{k+1}}+\th}\leq\\
\exp\brac{-\frac{\th^2/4}{\frac{2\tk}{\binom{n}{k+1}} \cdot \frac{3}{2(n-k)} }}=
e^{-\th^2(n-k)/(3\ep)}
=e^{- \th (n-k)/6},
}
for
\[
\th=\e-\frac{2\tk}{\binom{n}{k+1}}=\frac{\e}{2} \qquad(\text{ from \eqref{Tcond}}).
\]
Now we have that
\[
\Pr\brac{\exists v\in \cL_k,\, \Xi_{v,\tk}\geq \frac{2\tk}{\binom{n}{k+1}}+\th}\leq  \brac{\frac{en}{k}}^ke^{-\th(n-k)/6}.
\]
Writing $k=\a n$, we have that
\[
\brac{\frac{en}{k}}^ke^{-\th(n-k)/6}=e^{\ln(e/\a)\a n-\th n(1-\a)/6}=o\brac{\tfrac 1 n},
\]
for any $\a<\min(\th^2,\tfrac 1 {100})$, say.  Thus for $\a<\min(\tfrac{\e^2}4,\tfrac 1 {100})$, we have that 
\begin{equation}\label{XiEp}
\Pr\brac{\exists v\in \cL_k,\, \Xi_{v,\tk}\geq \e}=o\brac{\tfrac 1 n}.
\end{equation}

Now, by taking $\l>2\phi k$, we may assume that the levels above level $\l$ are still empty at time $\tk,$ so that $\Phi_{v,\tk}\leq \Xi_{v,\tk}$, completing the proof of the Lemma.
\end{proof}
Now we define $\Up_{v,{\colormark t}}$ for $v\in \cL_k$ to be the fraction of down-neighbors of $v$ which are unoccupied at time $t$.   By controlling $\Up_{v,t}$ and $\Xi_{v,t}$ simultaneously, we can make the behavior of the cluster with respect to $v$ sufficiently predictable.
\begin{lemma}\label{l.PhiUp}
Suppose that $0<\e\leq \frac{1}{100}$ and $k\leq \e^3n$ is fixed. Then,
\begin{equation}\label{PhiUp}
 \Pr\brac{\exists t\in [\t_{k-1,\e},\tk],\, v\in \cL_k,\,  \Phi_{v,t}+\Up_{v,t} \geq 2\e}=o\brac{\tfrac 1 n}.
\end{equation}
\end{lemma}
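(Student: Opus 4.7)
The plan is to induct on $k$, exploiting monotonicity in $t$: $\Phi_{v,t}$ is non-decreasing (more occupied vertices block more paths) while $\Upsilon_{v,t}$ is non-increasing (fewer down-neighbors remain unoccupied). Hence it suffices to prove the endpoint bounds $\Phi_{v,\tk}<\e$ and $\Upsilon_{v,\t_{k-1,\e}}<\e$ uniformly over $v\in\cL_k$. The first is immediate from Lemma~\ref{l.PhiEp}, since $k/n\leq\e^3<\e^2/4$.

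For the second bound I induct on $k$, with trivial base case $k=1$ (the unique down-neighbor of any $v\in\cL_1$ is $\0\in C_0$, so $\Upsilon_{v,t}\equiv 0$). For the step, let $\cA_{k-1}$ be the event that the lemma holds at level $k-1$, so $\Pr(\neg\cA_{k-1})=o(1/n)$ by the inductive hypothesis. Conditional on the inductive hypothesis continuing to hold through time $t'-1$, the probability that $\r_{t'}$ deposits its particle at any specific $u\in\cL_{k-1}$ equals
\[
\tfrac{1}{\binom{n}{k-1}}(1-\Phi_{u,t'-1})(1-\Upsilon_{u,t'-1})\geq \tfrac{1-2\e}{\binom{n}{k-1}}.
\]
Consequently, for any $v\in\cL_k$ and any subset $S$ of $\e k$ of its down-neighbors, the joint event that no particle visits $S$ during $[\t_{k-2,\e}+1,\t_{k-1,\e}]$ while $\cA_{k-1}$ holds has probability at most
\[
\brac{1-\tfrac{(1-2\e)\e k}{\binom{n}{k-1}}}^{\t_{k-1,\e}-\t_{k-2,\e}}\leq \exp\brac{-\tfrac{(1-2\e)\e^2(n-2k+1)}{4}}=e^{-\Omega(\e^2 n)},
\]
using $\t_{k-1,\e}-\t_{k-2,\e}=\tfrac\e4\binom{n}{k-1}\tfrac{n-2k+1}{k}$. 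Union-bounding over the $\binom{n}{k}\leq(en/k)^k$ choices of $v$ and $\binom{k}{\e k}\leq(e/\e)^{\e k}$ choices of $S$ yields a total of $\exp(O(\e^3n\log(1/\e))-\Omega(\e^2n))=o(1/n)$, valid for $\e\leq\tfrac{1}{100}$; combined with $\Pr(\neg\cA_{k-1})=o(1/n)$ this closes the induction.

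The main technical hurdle is making the chain-rule estimate for the displayed joint probability fully rigorous in the presence of the conditioning on $\cA_{k-1}$. The fix is to introduce the truncated events $\cA_{k-1,t}$ (the inductive conclusion at level $k-1$ restricted to times $\leq t$), which are $\cF_t$-measurable, and peel off one time-step at a time along the filtration: at each step, on the already-controlled event $\cA_{k-1,t'-1}\cap\{S\text{ unoccupied at }t'-1\}$, the single-step conditional probability of no hit on $S$ is bounded as claimed, removing any apparent circularity between the inductive hypothesis and the indicator being bounded.
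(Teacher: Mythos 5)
Your proof is correct and follows the same broad strategy as the paper's: induction on $k$, a reduction to endpoint bounds via the monotonicity of $\Phi$ (increasing in $t$) and $\Upsilon$ (decreasing in $t$), Lemma~\ref{l.PhiEp} for the $\Phi$ side, and a union bound over unoccupied subsets of the down-neighborhood for the $\Upsilon$ side, closed off with the per-step filtration argument you describe at the end. The one genuine difference is the time window you use to fill the down-neighborhood: you bound $\Upsilon_{v,\t_{k-1,\e}}$ (the correct endpoint for a decreasing quantity) by counting deposits during $[\t_{k-2,\e}+1,\t_{k-1,\e}]$, which is exactly the window covered by the inductive event $\cA_{k-1}$; the paper instead bounds $D\cap C_{\tk}$ by counting deposits during $(\t_{k-1,\e},\tk]$ and optimizes over the size of $D$ rather than fixing it at $\e k$. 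Your choice has the advantage that the step-$t'$ conditional deposit probability $\frac{1}{\binom{n}{k-1}}(1-\Phi_{u,t'-1})(1-\Upsilon_{u,t'-1})\geq\frac{1-2\e}{\binom{n}{k-1}}$ is justified directly by $\cA_{k-1,t'-1}$ without any extension of the $\Phi$-control at level $k-1$ past $\t_{k-1,\e}$; this matches the monotonicity reduction cleanly. One small bookkeeping point worth tightening: writing ``$\Pr(\neg\cA_{k-1})=o(1/n)$ closes the induction'' hides an accumulation over the $\Theta(\e^3 n)$ levels $k$. The fix is exactly what the paper's $k/n^3$ bookkeeping accomplishes: the per-level increment (both your $\exp(-\Omega(\e^2 n))$ and the quantity proved inside Lemma~\ref{l.PhiEp}) is in fact exponentially small, much smaller than $1/n^3$, so the cumulative failure probability over all $k\leq\e^3 n$ remains $o(1/n)$. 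Stating the inductive invariant quantitatively (e.g., $\Pr(\neg\cA_k)\leq k/n^3$) rather than as an unquantified $o(1/n)$ would make this airtight.
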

Lemma \ref{l.PhiUp} will be proved by induction on $k$.  Before giving the proof, we use it to prove Theorems \ref{t.full}, \ref{t.dense}, and \ref{t.most}.
\begin{proof}[Proof of Theorem \ref{t.full}]
Let $\Lambda_k$ denote the set of vertices in $\cL_k$ which are still unoccupied by particles at time $\tk$.   We fix $\e=\tfrac 1 {100}$ and apply Lemma \ref{l.PhiUp}.  Since $\tk-\t_{k-1,\e}>\tfrac 1 2 \tk$, we have for any vertex $v\in \cL_k$ that if $ \Psi=\max\set{\Phi_{v,t}+\Up_{v,t}:t\in [\t_{k-1,\e},\tk]}$,
\begin{equation}\label{vunvisited}
\Pr\brac{v\text{ not occupied by }\tk\mid \Psi\leq 2\e}
\lesssim \brac{1-\frac{1-2\ep}{\binom{n}{k}}}^{\frac12\omk\binom{n}{k}} \leq e^{-\frac13\omk}.
\end{equation}
{\bf Explanation:} For a fixed time $t$ for which $\Phi_{v,t}+\Upsilon_{v,t}\leq 2\ep$, the term $\frac{1}{\binom{n}{k}}\cdot (1-2\ep)$ is a lower bound on the probability that $\r_t$ chooses to go through $v$ on level $k$, avoids occupied vertices on the way to $v$ and then chooses an occupied vertex in level $k-1$.   Conditioning on $\Psi\leq 2\ep$ (i.e., on the $2\ep$ condition for all $t$'s simultaneously) inflates these probabilities by at most a factor of $\Pr(\Phi\leq 2\ep)^{-1}=1+o(1)$.

Thus
\begin{equation}\label{expunv}
  \E\brac{|\Lambda_k|\mid\Psi\leq 2\e}\lesssim  \binom{n}{k}e^{-\omk/3}
\end{equation}
and so by the Markov inequality,
\begin{equation}
  \label{anyvunvisited}
\Pr\brac{\Lambda_k\neq \varnothing\mid\Psi\leq 2\e}\leq \binom{n}{k}e^{-\omk/4}=o(n^{-1})
\end{equation}
as long as
\begin{equation}
  \label{anyvomcond}
\omk=\frac \e 4 \frac {n-k}{k+1}\geq 5k\log(ne/k).
\end{equation}
In particular, this holds for 
\begin{equation}
  \label{anyvworksfor}
  k\leq \sqrt{\frac{\e n}{10\log n}}
\end{equation}
and gives the desired statement (recalling that $\e=1/100$).
\end{proof}

\begin{proof}[Proof of Theorem \ref{t.dense}]
Again by the Markov inequality applied to $|\Lambda_k|$, we have
\begin{equation}
  \label{manyvunvisited}
\Pr\brac{|\Lambda_k|\geq \b \binom{n}{k}\ { \bigg|\ \Psi\leq2\e}}\lesssim \frac{e^{-\omk/3}}{\b}.
\end{equation}
This is $o(n^{-1})$, assuming that $\e=\frac{1}{100}$ and 
\begin{equation}
  \label{vworksfor}
  k\leq \frac{n}{150K\log n}, \quad\b=\frac{1}{n^K}, 
\end{equation}
for any constant $K>0$, giving the theorem.  
\end{proof}

Theorem \ref{t.most} is a consequence of the following slightly stronger statement:
\begin{lemma}\label{l.positivedensity}
For all $\e>0$, we have w.h.p.~that for all $k\leq \e^3n$, $\cL_k$ is at least $(1-2\e)$ occupied at time $\tk$.
\end{lemma}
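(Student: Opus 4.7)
The plan is to reduce Lemma~\ref{l.positivedensity} to a concentration estimate for a Bernoulli coupon-collector process, leveraging the high-probability control of $\Phi_{v,t}+\Up_{v,t}$ provided by Lemma~\ref{l.PhiUp}. The point is that, in contrast with Theorems~\ref{t.full} and~\ref{t.dense}, the expected unoccupied fraction $e^{-\omk/3}$ at $k$ close to $\e^3 n$ is merely a constant depending only on $\e$, so a Markov-inequality argument cannot deliver the $o(1/n)$ per-$k$ failure probability needed for a union bound over $k$.

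Fix $k\leq \e^3 n$ and condition on the event $\cE_k$ of Lemma~\ref{l.PhiUp}, which occurs with probability $1-o(1/n)$ and guarantees $\Phi_{v,t}+\Up_{v,t}\leq 2\e$ for every $v\in\cL_k$ and every $t\in[\tau_{k-1,\e},\tk]$. For each such $t$ and each $v\in\cL_k\setminus C_{t-1}$, the conditional probability that $\r_t$ sticks at $v$, given that $\r_t\cap\cL_k=\{v\}$, equals $(1-\Phi_{v,t})(1-\Up_{v,t})\geq 1-2\e$.

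I would then couple the DLA dynamics during $(\tau_{k-1,\e},\tk]$ with a Bernoulli coupon-collector process on $N=\binom{n}{k}$ items. For each $t$ in the window, let $v_t$ be the unique vertex of $\r_t\cap\cL_k$---so that $v_t$ is uniform on $\cL_k$---and let $B_t\sim\Be(1-2\e)$ be independent of everything else. Using an auxiliary uniform random variable, $B_t$ can be coupled with the sticking event so that whenever $v_t\notin C_{t-1}$ and $B_t=1$, the walk $\r_t$ in fact sticks at $v_t$; this is legitimate on $\cE_k$ because the conditional sticking probability is already at least $1-2\e$. Thus, if some $t$ in the window has $v_t=v$ and $B_t=1$, then $v$ is occupied in $C_{\tk}$, and so $|\Lambda_k|$ is stochastically dominated by the number $U$ of unmarked items after $T=\tk-\tau_{k-1,\e}\geq (1-\e^3)\tk$ rounds of a Bernoulli coupon-collector on $N$ items with marking probability $1-2\e$ per round.

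The last step is a standard concentration bound for $U$: one has $\E[U]=N\left(1-(1-2\e)/N\right)^T\leq N\exp(-(1-2\e)\omk/2)$, and since $\omk\geq (1-\e^3)/(4\e^2)$ throughout $k\leq \e^3 n$, this is at most $N\exp(-1/(32\e^2))$, which is far smaller than $\e N$. The unmarked indicators $\{Z_v\}_{v\in\cL_k}$ are negatively associated---each $Z_v$ is a non-increasing function of the disjoint family $\{(v_t,B_t):t\text{ in the window}\}$, and within each round $(\1[v_t=v])_{v\in\cL_k}$ is NA by the single-ball balls-into-bins principle---so a Chernoff bound for NA sums gives
\[
\Pr(U\geq 2\e N)\leq \exp(-\Omega(\e N))\leq \exp(-\Omega(\e n)).
\]
A union bound over $k\leq \e^3 n$, together with the $o(1/n)$ failure probability from Lemma~\ref{l.PhiUp} summed over $k$, then yields the Lemma. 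The main obstacle is verifying the coupling rigorously: $B_t$ must remain $\Be(1-2\e)$ conditional on the past and on $v_t$, which is arranged by the auxiliary-randomness construction based on the $1-2\e$ lower bound on the conditional sticking probability.
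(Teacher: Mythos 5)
Your proposal is correct in substance, but it takes a genuinely different route from the paper. The paper's proof is a two-line deterministic double count: it invokes Lemma \ref{l.PhiUp} one level up, so that at time $\tk$ every $w\in\cL_{k+1}$ satisfies $\Up_{w,\tk}<2\e$ and hence has at least $(1-2\e)(k+1)$ occupied down-neighbours; counting the edges $m$ between $\cL_{k+1}$ and the occupied part of $\cL_k$ gives $(1-2\e)(k+1)\binom{n}{k+1}\leq m\leq \ell(n-k)$, i.e.\ $\ell\geq(1-2\e)\binom{n}{k}$, with no probabilistic input beyond the $o(1/n)$ failure probability already contained in Lemma \ref{l.PhiUp}. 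You instead use the level-$k$ statement of Lemma \ref{l.PhiUp}, a coupling to a Bernoulli coupon collector, and an NA--Chernoff bound. This works and in fact buys more than is asked: your expected unoccupied fraction is $e^{-\Omega(1/\e^2)}$, so the same computation yields occupancy $1-e^{-\Omega(1/\e^2)}$ rather than $1-2\e$, with per-$k$ failure probability $e^{-\Omega(\e\binom{n}{k})}$; the paper's argument is shorter and sidesteps any new conditioning or concentration issues. Two points in your write-up need tightening. First, conditioning on the whole-window event $\cE_k$ and then asserting the per-step sticking bound is the wrong order of operations, since $\cE_k$ constrains the future of step $t$; the clean fix is to build the coupling unconditionally up to the stopping time at which $\Phi_{v,t}+\Up_{v,t}\leq 2\e$ first fails (the bound $(1-\Phi_{v,t-1})(1-\Up_{v,t-1})\geq 1-2\e$ is measurable with respect to the past), and then note that on $\cE_k$ the domination $|\Lambda_k|\leq U$ holds, adding $\Pr(\neg\cE_k)=o(1/n)$ to the failure probability. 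Second, the negative-association justification as written (``non-increasing function of the disjoint family $\set{(v_t,B_t)}$'') is garbled, since every $Z_v$ depends on all pairs $(v_t,B_t)$; the correct statement is that each round is a single ball thrown into $N+1$ bins (the extra bin meaning ``no mark''), so the indicators of the events $\set{v_t=v,\,B_t=1}$ over all $(v,t)$ form an NA family, and the $Z_v$ are monotone functions of disjoint index subsets of that family. With these repairs (and the same implicit restriction $\e\leq\tfrac1{100}$ needed to invoke Lemma \ref{l.PhiUp}, which the paper's own statement also glosses over), your argument is a valid, if heavier, substitute for the paper's counting step.
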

\begin{proof}
This follows directly from Lemma \ref{l.PhiUp}.
 Indeed, if there are $\ell$ occupied sites in $\cL_k$ at time $\tk$, and $m$ edges between $\cL_{k+1}$ and occupied sites in $\cL_k$, then assuming that $\Phi_{v,t}+\Up_{v,t} \leq 2\e$ for $v\in \cL_{k+1}$, (from \eqref{PhiUp}), the degrees of vertices in $\cL_k,\cL_{k+1}$ gives that with probability $1-o(n^{-1})$,
\[
\binom{n}{k+1}\cdot (1-2\e)(k+1)\leq m\leq \ell(n-k),
\]
so that $\ell\geq (1-2\e)\binom{n}{k}$.
\end{proof}

We now prove Lemma \ref{l.PhiUp}, by induction on $k$. 


\begin{proof}[Proof of Lemma \ref{l.PhiUp}]
In particular, assuming that \eqref{PhiUp} holds for some $k$, we aim to prove that if $0<\e\leq \frac{1}{100}$ and $k\leq \e^3n$ then
\begin{equation}\label{PhiUp+}
\Pr\brac{\exists t\in [\t_{k-1,\e},\t_{k,\e}],\, w\in \cL_{k},\,  \Phi_{v,t}+\Up_{v,t} \geq 2\e}\leq \frac{2k}{n^3}.
\end{equation}
Observe that since $\Phi_{v,t}$ is increasing in $t$ and $\Up_{v,t}$ is decreasing in $t$,  \eqref{PhiUp+} can be proved by showing
\begin{align}
&0<\e\leq \frac{1}{100},k\leq \e^3n\text{ implies that } \Pr\brac{\exists w\in \cL_{k},\, \Up_{w,\tk}\geq \e}\leq \frac{k}{n^3},\label{UpOR} \\
\noalign{and}\\
&0<\e\leq \frac{1}{100},k\leq \e^3n\text{ implies that } \Pr\brac{\exists w\in \cL_{k},\, \Phi_{w,\t_{k,\e}}\geq \e}\leq \frac{k}{n^3}.\label{PhiOR} 
\end{align}
Of course, \eqref{PhiOR} follows from \eqref{PhiEp}, so we just need to show \eqref{UpOR}.   For the sake of conditioning in the induction, define the event
\[
\cA_{k,\e,n}=\brac{\forall t\in [\t_{k-1,\e},\t_{k,\e}],\, w\in \cL_{k},\,  \Phi_{v,t}+\Up_{v,t} < 2\e},
\]
so that we are aiming to prove inductively that
\[
\Pr(\cA_{k,\e,n})\geq 1-\frac{k}{n^3}.
\]
As a base case we take $k=1$ which trivially satisfies \eqref{PhiUp+}. Assume $k\geq 1$ and fix some vertex $w\in \cL_{k}$, and let $N^-_w\sbs \cL_{k-1}$ be the down-neighborhood of $w$.  If we fix a set $D\sbs N^-_w$ of size $|D|=\D$, then we have, since $k\leq \e^3n$, that 
\beq{downnbr}{
\Pr\brac{D\cap C_{\tk}=\varnothing\mid \cA_{k-1,\e,n}}\leq \brac{1-\frac{4\D}{5\binom{n}{k-1}}}^{\tk-\t_{k-1,\e}}\leq e^{-\tfrac{\D\omk}{2}}.
}
{\bf Explanation:} The first inequality arises because each path $\r_t$ for $t\in [\t_{k-1,\frac 1 {10}}+1,\tk]$ has probability $\frac{\D}{\binom{n}{k-1}}$ of intersecting $D$, and conditioned on that event, applying \eqref{PhiUp} inductively with $\e=\tfrac 1 {100}$ ensures that with probability at least $\tfrac 4 5$, a particle will occupy at least one site $v$ of $D$ after step $t$ for $\t_{k-1,\e} <t\leq \tk$ (either because $v$ was already occupied before step $t$, or because $\r_t$ deposits a particle at $v$.)  The second inequality arises because $k\leq \e^3 n$ implies that $\tk-\t_{k-1,\e}\geq \frac{\omk}{2}\binom{n}{k}$.

Thus we have that
\begin{align}
&\Pr\brac{\exists w\in \cL_{k}, D\sbs N^-_w, \text{ s.t. }|D|=\D, D\cap C_{\tk}=\varnothing}\\
&\leq \binom{n}{k}\binom{k}{\D} e^{-\D\omk/2}\leq \bfrac{ne}{k}^{k}\bfrac{ke}{\D}^\D e^{-\D\omk/2}\\
&=e^{k\log (ne/k)+\D\log (ke/\D)-\D\omk/2}\leq \frac{1}{n^3},
\end{align}
if either $k\leq n^{1/2}$ or (ii) $k>n^{1/2}$ and $\D\leq k$ and $\D\omk>4\max(k\log(ne/k),\D\log(ke/\D))$.

For (ii), recalling that $\omk=\tfrac \e 4 \cdot \frac{n-k}{k+1}$, we can take
\[
\D(n-k)>\tfrac{16}{\e}\cdot k(k+1) \log (ne/k),
\]
and
\[
n-k>\tfrac{4}{\e}\cdot k\log(ke/\D),
\]
which, for $\frac{\D}{k}\geq \e$, would follow from
\begin{equation}
\label{increasingink}
\frac{20\cdot k\log (n/k)}{n-k}<\e^2,
\end{equation}
and
\begin{equation}\label{increasingink1}
e^{1-\e\frac{n-k}{4k}}<\e,
\end{equation}
respectively.  
Both \eqref{increasingink} and \eqref{increasingink1} are satisfied when $k<\e^3 n$ and $n$ is large.
\end{proof}

Lemma \ref{l.PhiUp} is not quite strong enough to prove Theorem \ref{t.notall}.  For that purpose, we prove the following Lemma, which allows stronger statements when $k$ is linear in $n$:
\begin{lemma}
\label{l.rho}
Suppose that $k\geq k_0$. Let $t_k^\r$ be the first time when a $\r$ fraction of the vertices in $\cL_k$ are occupied. We have that
\begin{equation}\label{r_PhiUp}
\Pr\brac{\exists w\in \cL_{k+1},\,  \Up_{w,t^\r_k} \geq \tfrac 1 {10}}=o\brac{\tfrac 1 n},
\end{equation}
provided that $\e=\frac1{100},\a=\frac{k}{n}\leq \e^3$ and 
\begin{equation}\label{conditions}
\r=1-\bfrac {k+1}{en}^{100}.
\end{equation}
\end{lemma}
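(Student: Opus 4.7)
The plan is to bound, for each $w\in\cL_{k+1}$ and each $\Delta$-subset $D\sbs N^-_w$ with $\Delta:=\lceil(k+1)/10\rceil$, the probability that $D$ is entirely unoccupied at time $t_k^\r$; the event $\Up_{w,t_k^\r}\geq\tfrac{1}{10}$ is precisely the existence of such a $D$, so a union bound over $w$ and $D$ will control the bad event.

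List the vertices of $\cL_k$ in the order they are occupied as $v^{(1)},v^{(2)},\dots$, so that $C_{t_k^\r}\cap\cL_k=\{v^{(1)},\dots,v^{(M)}\}$ for $M:=\lceil\r\binom{n}{k}\rceil$. Condition on the event $\cA_{k,\e,n}$ from Lemma~\ref{l.PhiUp}, which yields $\Phi_v+\Up_v<2\e$ for all $v\in\cL_k$ throughout $[\tau_{k-1,\e},\tau_{k,\e}]$; separately, the expectation argument from the proof of Theorem~\ref{t.full} combined with $\a\leq\e^3$ gives $t_k^\r\leq\tau_{k,\e}$ w.h.p. On this event, for a step $t$ in the interval and any unoccupied $v\in\cL_k$, the probability that $v$ becomes newly occupied at step $t$ equals $(1-\Phi_v)(1-\Up_v)/\binom{n}{k}\in[(1-2\e)/\binom{n}{k},1/\binom{n}{k}]$. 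Consequently, conditional on the $i$th $\cL_k$-occupation happening at this step, the probability it lands at any particular currently-unoccupied $v\in U_{i-1}:=\cL_k\setminus\{v^{(1)},\dots,v^{(i-1)}\}$ is at least $(1-2\e)/|U_{i-1}|$: the sequence $v^{(i)}$ samples $\cL_k$ approximately uniformly without replacement, losing only a multiplicative $(1-2\e)$-factor relative to true uniform sampling.

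Let $N_0\leq\tau_{k-1,\e}=(\e/4)\binom{n}{k}$ be the number of $\cL_k$-occupations by time $\tau_{k-1,\e}$. For $i>N_0$ and $D\sbs U_{i-1}$, the bound above yields $\Pr(v^{(i)}\notin D\mid D\sbs U_{i-1})\leq 1-(1-2\e)\Delta/|U_{i-1}|$ with $|U_{i-1}|=\binom{n}{k}-(i-1)$, so a telescoping product gives
\[
\Pr(D\sbs U_{t_k^\r}) \leq \prod_{i=N_0+1}^M\brac{1-\frac{(1-2\e)\Delta}{\binom{n}{k}-(i-1)}} \leq \brac{\frac{\binom{n}{k}-M}{\binom{n}{k}-N_0}}^{(1-2\e)\Delta} \leq (2(1-\r))^{(1-2\e)\Delta},
\]
using $\binom{n}{k}-M=(1-\r)\binom{n}{k}$ and $\binom{n}{k}-N_0\geq(1-\e/4)\binom{n}{k}\geq\tfrac{1}{2}\binom{n}{k}$. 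Union-bounding over $w\in\cL_{k+1}$ and $D\sbs N^-_w$ of size $\Delta$ gives
\[
\Pr\brac{\exists w\in\cL_{k+1},\,\Up_{w,t_k^\r}\geq\tfrac{1}{10}} \leq \binom{n}{k+1}\binom{k+1}{\Delta}(2(1-\r))^{(1-2\e)\Delta}.
\]
Plugging in $\Delta=(k+1)/10$, $1-\r=((k+1)/(en))^{100}$, and $\e=1/100$ turns the right-hand side into $e^{O(k+1)}\cdot((k+1)/(en))^{9.8(k+1)}$: the exponent $100$ in the definition of $\r$ is chosen precisely so that, under $\a\leq\e^3$, this second factor dominates, yielding $e^{-\Omega(k+1)}$. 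Since $k\geq k_0=\sqrt{c_0 n/\log n}\to\infty$, this is $o(1/n)$.

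The main technical obstacle is the approximate-uniform-sampling step: the assertion that, conditional on $\cA_{k,\e,n}$, the $i$th $\cL_k$-occupation lands at any given vertex of $U_{i-1}$ with probability at least $(1-2\e)/|U_{i-1}|$. This requires a careful conditional-probability calculation that handles the interleaving of depositions at different levels and exploits the Markov structure of the walks $\r_t$; additionally, one must verify through a Markov-inequality argument that $t_k^\r\leq\tau_{k,\e}$ holds with sufficient probability under $\a\leq\e^3$, so that the relevant $\cL_k$-occupations indeed all occur during the interval where $\cA_{k,\e,n}$ controls $\Phi+\Up$.
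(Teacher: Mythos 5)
Your union-bound skeleton (over $w\in\cL_{k+1}$ and $\D$-subsets $D\sbs N^-_w$ with $\D=(k+1)/10$) matches the paper, but the way you bound $\Pr\brac{D\cap C_{t_k^\r}=\varnothing}$ is genuinely different, and it rests on a step that is not available: the claim that $t_k^\r\leq\t_{k,\e}$ with failure probability $o(1/n)$. The paper never needs any upper bound on $t_k^\r$. It proves a \emph{lower} bound $t_k^\r\geq\t_{k-1,\e}+H_{\r,k}$ with $H_{\r,k}=\tfrac12\log\tfrac1{1-\r}\binom nk$, by dominating the occupancy of $\cL_k$ by a balls-in-bins count, and then shows that during the deterministic window $[\t_{k-1,\e},\t_{k-1,\e}+H_{\r,k}]\sbs[\t_{k-1,\e},\t_{k,\e}]$ (where Lemma \ref{l.PhiUp} applies) each step hits $D$ with probability at least $\tfrac{4\D}{5\binom nk}$, giving survival probability $O\brac{(1-\r)^{\D/5}}$; the exposure of $D$ thus scales with $\log\tfrac1{1-\r}$ automatically, whether or not $\cL_k$ ever actually reaches density $\r$ by time $\t_{k,\e}$. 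Your route needs the opposite: that level $k$ really attains density $\r=1-\bfrac{k+1}{en}^{100}$ before the control of Lemma \ref{l.PhiUp} expires at $\t_{k,\e}$. The justification you cite (Markov applied to the expectation bound from the proof of Theorem \ref{t.full}) does not deliver this throughout $k\leq\e^3n$: that bound is $\E|\Lambda_k|\lesssim\binom nk e^{-\omk/3}$ with $\omk/3=\tfrac{n-k}{1200(k+1)}$, while $1-\r=e^{-100\log(en/(k+1))}$; at $k=\e^3n=10^{-6}n$ one has $\omk/3\approx 833$ but $100\log(en/(k+1))\approx 1480$, so $e^{-\omk/3}/(1-\r)\to\infty$ and Markov gives nothing. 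Even the exponent $(1-2\e)\omk/2\approx 1225$ implicit in \eqref{vunvisited} is still short of $1480$; you would have to redo the occupation estimate over the full window $\t_{k,\e}-\t_{k-1,\e}\approx\omk\binom nk$ to get close to $e^{-(1-2\e)\omk}\approx e^{-2450}$, and check this thin margin over the whole range $k_0\leq k\leq\e^3n$ with failure probability $o(1/n)$. As written, this is a genuine gap, and it is precisely the difficulty the paper's lower-bound-on-$t_k^\r$ device is built to avoid.

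A secondary concern, which you flag yourself, is the approximately-uniform-without-replacement step. The pointwise bound of $(1-2\e)/|U_{i-1}|$ for the landing site of the $i$th $\cL_k$-occupation is fine conditional on the configuration at that step when $\Phi_{v,t}+\Up_{v,t}<2\e$, but you condition on the trajectory-global events $\cA_{k,\e,n}$ and $\set{t_k^\r\leq\t_{k,\e}}$, which carry information about the future of step $i$, so the telescoping product is not immediate; one needs a filtration/stopping-time argument or the paper's device of paying an explicit $(1-n^{-K})^{-1}$ or $(1+o(1))$ conditioning factor. That part is repairable and comparable in spirit to glosses the paper itself makes, but combined with the missing upper bound on $t_k^\r$ your proof does not go through as stated, whereas the paper's per-time-step window argument avoids both issues.
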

\begin{proof}
For any constant $K$ and sufficiently large $n$, we have that 
\[
\Pr\brac{t_k^\r> \frac 1 2 \binom{n}{k}\log\bfrac{1}{1-\r}}>1-\frac  1 {n^K}.
\]
This is because the number of vertices in $\cL_k$ that are occupied at time $t$ is dominated by the number of occupied bins when $t$ balls are placed randomly into $\binom{n}{k}$ bins. Note that the expected number of occupied bins in the latter experiment is \beq{}{\binom{n}{k}\brac{1-\brac{1-\frac{1}{\binom{n}{k}}}^t}\approx (1-(1-\r)^{1/2})\binom{n}{k} \leq  \frac{\r}{2}\binom{n}{k}\text{ when }t=\frac 1 2 \binom{n}{k}\log\bfrac{1}{1-\r}.
}
Note also that the number of occupied boxes is highly concentrated. This can be verified through a simple application of McDiarmid's inequality, see \cite{Bern}.

In particular, 
\beq{wes}{
t^\r_{k}-\t_{k-1,\e}\geq H_{\r,k}:=\frac {\log(1/(1-\r))} 2\binom{n}{k}
}
with probability at least $\brac{1-\tfrac 1 {n^K}}$. This follows from the fact that $\r\geq 1-e^{-\e/4}$.  

From \eqref{conditions} we also have that
\[
H_{\r,k}=50\log\bfrac{en}{k+1}\cdot \binom{n}{k},
\]
and so we have $\t_{k-1,\e}+H_{\r,k}\leq \t_{k,\e}$ for sufficiently large $n$; see \eqref{Tcond}.  In particular, we can apply Lemma \ref{l.PhiUp} in the entire range $[\t_{k-1,\e},\t_{k-1,\e}+H_{\r,k}]$.

To do this, we fix some vertex $w\in \cL_{k+1}$, and let $N^-_w\sbs \cL_{k}$ be the down-neighborhood of $w$.  If we fix a set $D\sbs N^-_w$ of size $|D|=\D$, then we have for $k\leq\e^3n$ that
\begin{multline}
\Pr\brac{D\cap C_{t^\r_{k}}=\varnothing\mid t^\r_k\geq \t_{k-1,\e}+H_{\r,k}}\leq \brac{1-\frac{1}{n^K}}^{-1}\brac{1-\frac{4\D}{5\binom{n}{k}}}^{H_{\r,k}}\\
\leq 2e^{-\tfrac{\D\cdot \log(1/(1-\r))} {5}}
=2(1-\r)^{\D/5}.
\end{multline}
{\bf Explanation:} 
We repeat the argument for \eqref{downnbr} and multiply by $(1-n^{-K})^{-1}$ to account for conditioning on $t^\r_k\geq \t_{k-1,\e}+H_{\r,k}$.

Thus we have that
\begin{align}
&\Pr\brac{\exists w\in \cL_{k+1}, D\sbs N^-_w, \text{ s.t. }|D|=\D=(k+1)/10, D\cap C_{\tk}=\varnothing\mid t^\r_k\geq \t_{k-1,\e}+H_{\r,k}}\\
&\leq \binom{n}{k+1}\binom{k+1}{\D}\times 2(1-\r)^{\tfrac {\D} 5}
\leq 2\bfrac{ne}{k+1}^{k+1}\bfrac{(k+1)e}{\D}^\D (1-\r)^{\D/5}.\\
&=  2e^{(k+1)\log (ne/k+1)+\D\log ((k+1)e/\D)+(\D/5)\log(1-\r)}
  =o\brac{\frac{1}{n}},
\end{align}
if $\D=(k+1)/10$ and $\D\log(\tfrac 1{1-\r})\geq 10\max\set{(k+1)\log (ne/(k+1)),\D\log((k+1)e/\D)}$.

For these cases, we require that
\begin{equation}
\label{r_increasingink}
\r\geq1-\bfrac {k+1}{e n}^{100}
\end{equation}
and
\begin{equation}
\r\geq 1-\bfrac{\D}{e(k+1)}^{10}=1-\bfrac{1}{10e}^{10},
\end{equation}
respectively, both of which follow from our choice of $\rho$.
\end{proof}

We are now ready to prove Theorem \ref{t.notall}.
\begin{proof}[Proof of Theorem \ref{t.notall}]
We apply Lemma \ref{l.rho} with $\e=\tfrac 1 {100}$ and $\r$ satisfying \eqref{conditions}. Condition on the event $\cF=\{\forall v\in \cL_k, \,\Up_{v,t^\r_k} < \tfrac 1 {10}\}$.  Let $O_k=\cL_{k}\cap C_{t_k^\r}$ be the set of occupied vertices in $\cL_{k}$, so that $|O_k|=\rdup{\r \binom{n}{k}}$.  Fix a vertex $v\in U_k=\cL_{k}\stm O_k$ and let $N^+_v$ be its neighborhood in $\cL_{k+1}$.  For each $w\in N^+_v$, we define events $\cE_w^1$ and $\cE_w^2$, respectively by
\begin{enumerate}
\item Every path $\r_t$, $t>t^\r_k$ which contains $w$ avoids both $O_k$ and $v$, and
\item The first path $\r_t$, $t>t^\r_k$ which intersects $w$ and then hits $O_k\cup\set{v}$ hits $O_k$ and not $v$,
\end{enumerate}
and set $\cE_w=\cE_w^1\cup \cE_w^2.$
Let $\cA_v$ be the event that $\cE_w$ occurs for each $w\in N^+_v$.   Observe that if $\cA_v$ occurs then vertex $v$ remains unoccupied on termination. Moreover, if we fix some set $W_0\sbs N^+_v$, then the events $\cE_w$ for $w\in W_0$ are conditionally independent, given the event that $W_0=\cW:=\{w\in N^+_v\text{ such that } \neg \cE_w^1 \text{ occurs}\}$.  (Indeed, given that $\neg \cE_w^1$ occurs, we know that at least one path goes through $w$.  Moreover, the event $\cE_w^2$ depends on just the first path with this property, and the choice this path makes below $w$ is independent of choices made by paths not going through $w$.)  Now, for any choice of $W_0$ and any $w\in W_0$, we have
\[
\Pr(\cE_w\mid \cF,\cW=W_0)\geq 1-\frac{1}{\tfrac 9 {10}k},
\]
since $|(O_k\cap N^-_w)\cup \{v\}|\geq \tfrac 9 {10}k$, and $\cE_w^2$ implies that the first path through $w$ choosing among $(O_k\cap N^-_w)\cup \{v\}$ chooses $v$.  Now using the conditional independence of the $\cE_w$'s given $W_0$, we have that 
\beq{vend1}{
\Pr(\cA_v\mid \cF,\cW=W_0) \geq\brac{1-\frac{10}{9k}}^{n-k}\geq p:=e^{{-1/\a}}.
}
Finally, since this is true for any fixed $W_0$, we have that
\beq{vend}{
\Pr(\cA_v\mid \cF) \geq\brac{1-\frac{10}{9k}}^{n-k}\geq p.
}

 It follows from \eqref{vend} that on termination, conditioning on $\cF$, there are in expectation at least $(1-\r)p\binom{n}{k}$ vertices of $U_k$ that remain unoccupied at the end of the process.

Let now $Z_k$ denote the number of $v$ such that $\cA_v$ occurs. Now $Z_k$ is determined by at most $(k+1)\binom{n}{k+1}$ random choices viz.~the paths from {\bf 1} to $\cL_{k+1}$ that give rise to a first visit to a vertex of $\cL_{k+1}$ that continues on to $O_k$. More precisely, we partition the paths from {\bf 1} to {\bf 0} according to which member of $\cL_{k+1}$ they visit. $Z_k$ is determined by an independent choice of a path from each part of the partition followed by a choice of vertex in $\cL_k$. Changing one of these choices, changes $Z_k$ by at most one and so applying McDiarmid's inequality we get
\beq{success}{
\Pr\brac{Z_k\leq \frac{1-\r}{2}p\binom{n}{k}}\leq \exp\set{-\frac{(1-\r)^2p^2\binom{n}{k}^2}{2(k+1)\binom{n}{k+1}}}\leq \exp\set{-\frac{\bfrac{k}{en}^{100}e^{-2n/k}\bfrac{n}{k}^k}{n}}.
}
Now
$$\log\brac{\frac{\bfrac{k}{en}^{100}e^{-2n/k}\bfrac{n}{k}^k}{n}}=(k-100)\log(n/k)-100-\log n-\frac{2n}{k}\geq 2\log n$$
if $k_1\leq k\leq \e^3n$. This proves the Theorem.
\end{proof}

\section{Long path}
In this section we prove Theorem \ref{t.path}
\subsection{Setup}
We begin our proof by fixing certain parameters $a,b,c$. Recall that $n^a$ is the length of the path that we prove exists.  $n^b$ will be a bound on the expected value of a level at a certain time, and the exponent $c$ will occur in error bounds in our concentration analysis.  

Our proof will require that these parameters satisfy the following constraints:
\begin{enumerate}
\item $a<1-2c$. This is needed to ensure that the probability in \eqref{prob} is $o(n^{-1})$ as claimed.
\item $2c<1-a$. This is needed to ensure that the RHS of \eqref{summary} is $o(\m_0)$.
\item $a<2c$. This is needed to ensure that $\d_j$ in \eqref{ineq1} to be $o(1)$.
\item $a+b<1$. This is needed to ensure that the LHS of \eqref{last} is o(1).
\item $a<\frac12$. This is also needed to ensure that the LHS of \eqref{last} is o(1).
\item $a>b$.  This is needed in \eqref{noA1b}.
\end{enumerate}
We choose $a$ as large as possible here. So we take
$$a=\frac12-\e,\;b=\frac12-2\e,\;c=\frac14-\frac{\e}3$$
for some arbitrarily small $\e>0$.  

We then let
\begin{equation}\label{lisna}
\ell=n^a\text{ and }k=n-\ell
\end{equation}
and assume that $\ell$ is an integer.
We let $\cO_{j,t}=C_t\cap \cL_j$, the set of occupied vertices on level $j$ at time $t$. 

A considerable difficulty facing our proof of Theorem \ref{t.path} is that we do not understand the ``intermediate'' behavior of the cube; that is, our Theorems \ref{t.full}, \ref{t.dense}, \ref{t.most}, \ref{t.notall} lose their bite well below level $\tfrac n 2$, say.  Thus the proof must be agnostic to the behavoir of the process in the middle layers of the cube.  One natural idea to handle this would be to to assume a ``worst-case'' behavior for the intermediate levels of the cube; say, that level $k$ becomes full while levels $k+j$ $(j\geq 1)$ are still empty, and show that even in this scenario, a path of length nearly $n-k$ will still grow, for sufficiently large $k$.  However, the DLA process is not monotone in a clean way, preventing us from arguing directly that having level $k$ full while higher levels are empty is truly a worst-case scenario from the standpoint of the probability that a long isolated path reaches $\bf 1$.

Instead, we proceed by defining a stopping time.  We run the DLA process on the empty cube, until time $\t_0$ when there first exists $0\leq j^*<j_0$ such that  
\beq{defzeta}{
|\cO_{k+j^*,\t_0}|\geq \zeta(j^*,\m_0).
}
Here  $\m_0$ and $j_0$ are parameters which will chosen later, and we define
\beq{}{
\z(j,\m_0)=\frac{\binom{\m_0}{j_0+1}}{j_0\binom{\m_0}{j_0-j} \eta(j)},
}
and
\[
\eta(j)=\prod_{s=0}^{j-1}\binom{n}{\ell-s}.
\]
$\m_0$ will be an upper bound estimate for the time after $\t_0$ when we can expect the process to end, and $j_0\approx \sqrt{2\ell}$ will be the level from which we show the isolated path with grow.  We will see in \eqref{noA1b} that $\zeta(j_0-1,\m_0)<1$, so that the stopping time $\t_0$ always occurs.

Roughly speaking, by beginning our analysis from this stopping time, we begin from a situation where we have some (carefully chosen) useful bounds on the sizes of levels, which makes an analysis of the remainder of the process possible.

For the purpose of analyzing the growth of the DLA process in expectation, it is useful to allow the process to continue past the point when vertex $\1$ becomes occupied.  To do this, we extend the DLA process past time $\tend$ by letting $\Theta_t$ be the number of particles stuck ``above'' $\1$.   In particular, $\Theta_t=\max\set{0,t-\tend}$, and occupancies of vertices $v\in \cB$ at times $t>\tend$ are the same as at time $\tend$.

Now we let $X_{j,t}=|\cO_{k+j,\t_0+t}|$ for $0\leq j,t$ and let $Y_{j,t}=\Theta_t^2+\sum_{r\geq j^*+j}X_{r,t}$. (It would be natural to replace $\Theta_t^2$  with $\Theta_t$ here, but using $\Theta_t^2$---or any fast-enough growing function of $\Theta_t$---ensures that the following recurrence for $Y_{j,t}$ will not be broken by the cases where $t\gg \tend$.)  Then we have that for $j\geq1$,
\begin{align}
\E(X_{j,t}-X_{j,t-1}\mid X_{j-1,t-1})&\leq \frac{X_{j-1,t-1}}{\binom{n}{\ell-j+1}},\quad t\geq 1.\label{re1x}\\
\E(Y_{j,t}-Y_{j,t-1}\mid Y_{j-1,t-1})&\geq \frac{Y_{j-1,t-1}}{\sum_{r\geq j^*+j}\binom{n}{\ell-r+1}}
\approx \frac{Y_{j-1,t-1}}{\binom{n}{\ell-j^*-j+1}}
,\quad t\geq 1.\label{re2x}
\end{align}
{\bf Explanation}\\
The RHS of \eqref{re1x} is the probability that a particle chooses an occupied position on level $k+j$. It is an upper bound for the increase because it does not account for the particle being blocked higher up in the cube.

For the middle term in \eqref{re2x}, observe that there are $Y_{j-1,t-1}$ occupied vertices among the $\sum_{r\geq j^*+j}\binom{n}{\ell-r+1}$ vertices at or above level $j-1$; thus the middle term gives the probability that a randomly chosen vertex from $\r_t\cap \bigcup_{r\geq j^*+j} \cL_{\ell-r+1}$ is occupied, and the occurrence of this event implies that $Y_{j,t}$ increases by one.    This explains the first inequality.

Removing the conditioning in \eqref{re1x}, \eqref{re2x} we obtain for $j\geq 1$,
 \begin{align}
\E(X_{j,t}-X_{j,t-1})&\leq \frac{\E(X_{j-1,t-1})}{\binom{n}{\ell-j+1}},\quad t\geq 1.\label{re1}\\
\E(Y_{j,t}-Y_{j,t-1})&\gtrsim \frac{\E(Y_{j-1,t-1})}{\binom{n}{\ell-j^*-j+1}}
,\quad t\geq 1.\label{re2}
\end{align}

The recurrences \eqref{re1}, \eqref{re2} yield upper and lower bounds as on the expectations of $X_{j,t},Y_{j,t}$, which will be analyzed in Section \ref{occop}. 

To prove that a path grows from $j_0$, we will first show that after
\begin{equation}\label{m1}
\m_1:=\m_0-\om_1\binom{n}{\ell-j_0}
\end{equation}
\newcommand{\tfin}{t_{\rm fin}}
steps for 
$$\om_1=\log^2 n,$$ 
we will have that w.h.p.~$Y_{j_0,\t_0+\m_1}\gtrsim n^b$ and $X_{j_0-1,\t_0+\m_1}\lesssim \om_1n^{b}$.  Observe that this implies that for the minimum $t_{\rm fin}$ for which $Y_{j_0,\tfin}=1$, we have $X_{j_0-1,\tfin}\lesssim \om_1n^{b}$, and that we have that $|\cO_{k+j_0,\tfin}|=1$.    In particular, we will prove that the DLA process can quickly produce a path from to $\1$ after $\tfin$; that $X_{j_0-1,t}$ does not increase quickly after $\tfin$, and that the small value of $X_{j_0-1,t}$ for $t$ near $\tfin$ implies that no particles stick at $j_0$ while the path to $\1$ is being created.

\subsection{Choice of $\m_0,\m_1,j_0$}
In this section we define $\m_0,\m_1,j_0$ and compute various quantities associated with them for later use.  In particular, we let
\begin{equation}\label{xi}
\xi(j,t)=\frac{\binom{\m_0}{j_0+1}}{\eta(j)}\cdot \frac{\binom{t}{j}}{\binom{\m_0}{j_0}}=\zeta(j,\m_0)\cdot \frac{j_0\binom{\m_0}{j_0-j}\binom{t}{j}}{\binom{\m_0}{j_0}}.
\end{equation}
Note from \eqref{lisna} that
\beq{noA1}{
\z(j_0-1,\m_0)=\xi(j_0-1,\m_0)\frac{\m_0-j_0+1}{j_0^2\m_0}.
}
This, together with lines \eqref{j0=}, \eqref{qaz3} and \eqref{43}, below, will imply then that
\beq{noA1b}{
\z(j_0-1,\m_0)=O\bfrac{n^b}{n^a}=o(1).
}

Roughly speaking, $\xi(j,t)$ is an approximate target for comparison with $|\cO_{k+j,\t_0+t}|$.
In particular, we will choose $\mu_0,j_0$ and prove that
\begin{enumerate}[{\bf P1:}]
\item \hspace{1in}$\E(X_{j_0,\m_1})\lesssim \xi(j_0,\m_0)\approx n^b$ -- see \eqref{qaz3} and \eqref{nn2}.
\item \hspace{1in}$\E(X_{j_0-1,\m_1})\lesssim \xi(j_0-1,\m_0)\approx2e^{\frac43}n^b$ -- see \eqref{qaz2}.
\item \hspace{1in}$\E(Y_{j_0,\m_1})\gtrsim \frac{\xi(j_0,\m_0)}{j_0}$ -- see \eqref{Yeq}.
\end{enumerate}

We choose 
\beq{omega}{ 
\om=(1-a)\log n
}
and then $j_0$ by 
\[
j_0=\min\set{j:j(j+3)\geq 2\ell+\frac{4\ell}{\om-1}}.
\]
Now $j(j+3)-(j-1)(j+2)=2j+2$ and so we have that
\beq{j0def}{
2\ell+\frac{4\ell}{\om-1}\leq j_0(j_0+3)\leq 2\ell+\frac{4\ell}{\om-1}+2j_0+2.
}
Thus
\beq{j0=}{
j_0^2=2\ell+\frac{4\ell}{\om-1}+\th_0,
}
where $|\th_0|\leq 3j_0$.

Next we prove an asymptotic estimate for $\eta(j)$.
\begin{lemma}\label{l.etaest}
  If $\ell\gg j$, $n-\ell+j\gg 1$, and $\ell=o(n^{1/2})$, then 
  \begin{equation}
    \label{etaest}
    \eta(j)\approx \bfrac{n}{\ell}^{j(\ell-(j-1)/2)} (2\p\ell)^{-\frac12j} e^{\ell j+3\ell^2j/2n+\e_aj-\e_bj^2},
  \end{equation}
  where
  \begin{equation}
    \label{etaeps}
    \e_a=\frac{j}{2\ell}+O\brac{\bfrac{j}{\ell}^2},\quad \e_b=\frac{j}{6\ell}+\frac{j^2}{24\ell^2} +O\brac{\bfrac{j}{\ell}^3}.
  \end{equation}
\end{lemma}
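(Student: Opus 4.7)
The plan is to take the logarithm of $\eta(j)$ and apply Stirling's formula factor by factor, then carefully sum the resulting Taylor expansions in $s/\ell$ and $m/n$.

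First I would establish a single-factor expansion
\[
\log \binom{n}{m} = m\log\frac{n}{m} + m - \tfrac{1}{2}\log(2\pi m) - \frac{m(m-1)}{2n} + O\!\left(\frac{1}{m} + \frac{m^3}{n^2}\right),
\]
valid for $1 \leq m \leq n/2$, by applying Stirling to each of $n!, m!, (n-m)!$ and Taylor expanding both $-(n-m)\log(1-m/n)$ and $-\tfrac{1}{2}\log(1-m/n)$. Substituting $m=\ell-s$ for $0 \leq s \leq j-1$ and using the hypotheses $\ell \gg j$, $n-\ell+j \gg 1$, $\ell = o(n^{1/2})$, the per-term error is $o(1/j)$, summing to $o(1)$ in $\log \eta(j)$, which is absorbed into the $(1+o(1))$ factor implicit in $\approx$.

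Next I would extract the $s$-dependence of each summand by writing
\[
\log(n/(\ell-s)) = \log(n/\ell) - \log(1-s/\ell), \quad \log(2\pi(\ell-s)) = \log(2\pi\ell) + \log(1-s/\ell),
\]
and Taylor expanding $-\log(1-s/\ell) = \sum_{k\geq 1} s^k/(k\ell^k)$. Since $(\ell-s)$ and $(\ell-s)^2$ are already polynomial in $s$, every contribution reduces to finite combinations of the power sums $\sum_{s=0}^{j-1}s^k$, evaluated by the standard Faulhaber formulas.

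The main-order terms assemble cleanly: $\sum_{s=0}^{j-1}(\ell-s)\log(n/\ell) = j(\ell-(j-1)/2)\log(n/\ell)$ produces the factor $(n/\ell)^{j(\ell-(j-1)/2)}$; the $s$-independent $-\tfrac{1}{2}\log(2\pi\ell)$ yields $(2\pi\ell)^{-j/2}$; and combining $\sum(\ell-s)$ with the leading correction of $(\ell-s)(-\log(1-s/\ell))$ gives $e^{\ell j}$. The remaining $1/\ell$ corrections combine to
\[
-\frac{(j-1)j(2j-1)}{12\ell} + \frac{j(j-1)}{4\ell} = -\frac{j(j-1)(j-2)}{6\ell},
\]
which, regarded as a polynomial in $j$ with $1/\ell$ coefficients, matches $\e_a j - \e_b j^2$ at leading order with $\e_a = j/(2\ell) + O((j/\ell)^2)$ and $\e_b = j/(6\ell) + O((j/\ell)^2)$; the $1/\ell^2$ corrections then supply the $j^2/(24\ell^2)$ refinement in $\e_b$. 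Finally, the Stirling $-\sum_s (\ell-s)(\ell-s-1)/(2n)$ sums to a term of order $\ell^2 j/n$, which is $o(1)$ in the intended regime ($j = O(\sqrt\ell)$, $\ell$ sufficiently smaller than $n^{1/2}$), so representing it as the fixed $3\ell^2 j/(2n)$ is consistent with the $(1+o(1))$ tolerance of $\approx$.

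The only real obstacle is the bookkeeping: one must keep enough terms in each Taylor expansion to isolate the stated forms of $\e_a$ and $\e_b$ while simultaneously verifying that the Stirling $O(1/m + m^3/n^2)$ remainder, the $1/\ell^3$ Taylor remainders, and the $\ell^4/n^3$ contributions are all $o(1)$ in the exponent under the three stated hypotheses. No probabilistic input is used; the lemma is purely analytic.
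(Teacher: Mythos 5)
Your route is a legitimate alternative to the paper's: instead of reducing $\prod_{s=0}^{j-1}(\ell-s)!$ to a ratio of superfactorials $\f(\ell-j)/\f(\ell)$ and invoking the Barnes-function asymptotic \eqref{superfacest} (which is what the paper does, together with \eqref{binom} for the $n$-dependent part), you Stirling-expand each $\binom{n}{\ell-s}$ directly and sum via power sums. The main-term bookkeeping you sketch is right: the identity $-\frac{(j-1)j(2j-1)}{12\ell}+\frac{j(j-1)}{4\ell}=-\frac{j(j-1)(j-2)}{6\ell}$ is correct and does reproduce $\e_a j-\e_b j^2$ to the stated orders, the $(n/\ell)^{j(\ell-(j-1)/2)}$, $(2\p\ell)^{-j/2}$ and $e^{\ell j}$ factors come out as you say, and your per-term error control ($O(1/(\ell-s))+O(\ell^3/n^2)$ summing to $o(1)$) is adequate under the hypotheses. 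What the paper's superfactorial shortcut buys is mainly that all the $1/\ell$-corrections are packaged once in the expansion of $\f(\ell-x)/\f(\ell)$ rather than re-derived by Faulhaber summation; your version is more elementary but needs the careful power-sum bookkeeping you acknowledge.

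The genuine gap is your treatment of the $n$-dependent correction. You claim $-\frac{1}{2n}\sum_{s=0}^{j-1}(\ell-s)(\ell-s-1)$ is ``of order $\ell^2 j/n$, which is $o(1)$ in the intended regime,'' and on that basis declare it interchangeable with the stated $+3\ell^2 j/(2n)$. That is false precisely where the lemma is used: with $\ell=n^{1/2-\e}$ and $j$ up to $j_0\asymp\sqrt{2\ell}$ one has $\ell^2 j/n\asymp n^{1/4-5\e/2}\to\infty$, which is exactly why this term is kept in the exponent of \eqref{etaest} at all. Your computation actually gives $-\frac{\ell^2 j}{2n}(1+o(1))$, and $e^{-\ell^2 j/2n}$ differs from the stated $e^{+3\ell^2 j/2n}$ by a factor $e^{\Theta(\ell^2 j/n)}$, which is not $1+o(1)$; so as written your argument does not establish \eqref{etaest} and cannot be repaired by the ``$o(1)$'' claim. (In fairness, this step exposes a slip in the paper itself: \eqref{xival} carries $e^{-3\ell^2j/2n}$ on one side and $e^{+3\ell^2j/2n}$ on the other, while the direct computation gives coefficient $-\tfrac12$; the discrepancy is harmless downstream because Lemma \ref{l.xiapprox} only uses $\ell^2/n=o(1)$ to absorb this factor into $e^{\e_0 j_0}$ with $\e_0=o(1)$.) To make your proof sound, carry the sum exactly, record the correction as $e^{-\frac{\ell^2 j}{2n}(1+O(j/\ell))}$, and either state the lemma with that factor or note explicitly that it is of the form $e^{\d j}$ with $\d=O(\ell^2/n)=o(1)$, which is the only property the rest of the paper needs -- but do not assert that the term itself is $o(1)$.
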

\begin{proof}
We let
\begin{equation}
  \label{superfac}
  \f(\ell)=\prod_{r=1}^\ell r!
\end{equation}
be the superfactorial function.  It is known that
\begin{equation}\label{superfacest}
  \f(\ell)\approx C_1\ell^{\frac12\ell^2+\ell+\frac5{12}}\,e^{-\frac34\ell^2-\ell}\,(2\p)^{\frac12\ell}
\end{equation}
for some absolute constant $C_1>0$.  (See, for example, Adamchik \cite{Sloane}. We use the asymptotic expression for the Barnes function $G(z)$ on page 2. Note also that $\f(\ell)=G(\ell+2)$.)

We need to estimate $\frac{\f(\ell-x)}{\f(\ell)}$ where $x=O(\ell^{1/2})$. In preparation we observe that if $x=O(\ell^{1/2})$ then
\begin{equation}
\brac{1-\frac{x}{\ell}}^{\ell-x}=\exp\set{-x+\sum_{i=2}^\infty \frac{x^{i}}{i(i-1)\ell^{i-1}}} =e^{-x+\e_ax},
\end{equation}
where $\e_a=\sum_{i=2}^\infty\frac{x^{i-1}}{i(i-1)\ell^{i-1}} =\frac{x}{2\ell}+O\brac{\bfrac{x}{\ell}^2}$, and
\begin{equation}
\brac{1-\frac{x}{\ell}}^{\frac12(\ell-x)^2}=\exp\set{-\frac12x\ell+\frac{3}{4}x^2- \sum_{i=3}^\infty\frac{x^{i}}{i(i-1(i-2))\ell^{i-2}}} =e^{-\frac12x\ell+\frac34x^2-\e_bx^2},
\end{equation}
where $\e_b=\sum_{i=3}^\infty\frac{x^{i-2}}{i(i-1(i-2))\ell^{i-2}}=\frac{x}{6\ell}+\frac{x^2}{24\ell^2} +O\brac{\bfrac{x}{\ell}^3}$.

Thus, if $x=O(\ell^{1/2})$ then
\begin{align}
\frac{\f(\ell-x)}{\f(\ell)}&\approx\frac{ \brac{1-\frac{x}{\ell}}^{\frac12(\ell-x)^2}\brac{1-\frac{x}{\ell}}^{\ell-x} \brac{1-\frac{x}{\ell}}^{\frac{5}{12}} \ell^{\frac12(\ell-x)^2+\ell-x+\frac5{12}}\, e^{-\frac34(\ell-x)^2-(\ell-x)}\,(2\p)^{\frac12(\ell-x)}} {\ell^{\frac12\ell^2+\ell+\frac5{12}}\,e^{-\frac34\ell^2-\ell}\,(2\p)^{\frac12\ell}}\\
&= \brac{1-\frac{x}{\ell}}^{\frac12(\ell-x)^2}\brac{1-\frac{x}{\ell}}^{\ell-x} \brac{1-\frac{x}{\ell}}^{\frac{5}{12}} \ell^{-\ell x+\frac12x^2-x} e^{\frac32\ell x-\frac34x^2+x}(2\p)^{-\frac12x}\\
&\approx e^{-x-\frac12x\ell+\frac34x^2+\e_ax-\e_bx^2}\times \ell^{-\ell x+\frac12x^2-x} e^{\frac32\ell x-\frac34x^2+x}(2\p)^{-\frac12x}\\
&= \ell^{-\ell x+\frac12x^2-x} e^{\ell x+\e_ax-\e_bx^2}(2\p)^{-\frac12x}
.\label{ratiox}
\end{align}
Observe that if $s^3=O(m^2)$ for some $m\to\infty$ with $n$ then
\beq{binom}{
\binom{m}{s}= \frac{m^s}{s!}\exp\set{-\frac{s^2}{2m}+O\bfrac{s^3}{m^2}}.
}

So, if $\ell\gg j,n-\ell+j\gg1$ and $\ell=o(n^{1/2})$ then
\beq{xival}{
\eta(j)\approx\frac{n^{j(\ell-(j-1)/2)} e^{-3\ell^2j/2n}\f(\ell-j)}{\f(\ell)}\approx \bfrac{n}{\ell}^{j(\ell-(j-1)/2)} (2\p\ell)^{-\frac12j} e^{\ell j+3\ell^2j/2n+\e_aj-\e_bj^2},
}
as desired.
\end{proof}
As a consequence, we have: 
\begin{lemma}\label{l.xiapprox}
If $\m_0\geq \m_1\gg \ell^2$ then 
\begin{equation}
  \xi(j_0,\m_0)\approx  \frac{\m_0^{j_0+1}}{(j_0+1)!\eta(j_0)}\approx \frac{(\m_0 e)^{j_0+1}(2\p)^{\frac12j_0}\ell^{j_0(\ell-\frac12j_0+1)}}{\sqrt{2\p j_0}(j_0+1)^{j_0+1}n^{j_0(\ell-\frac12(j_0-1))}e^{(\ell -\frac13+\e_0)j_0}},
\end{equation}
where $\e_0=o(1)$.
\end{lemma}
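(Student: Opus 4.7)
The plan is to unwind the definition of $\xi(j_0,\mu_0)$, apply routine asymptotic expansions to each of its three ingredients (the binomial coefficient, the factorial, and $\eta(j_0)$), and then show that after the dust settles the correction exponent in the exponential simplifies to $(\ell-\tfrac13+o(1))j_0$.

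First, directly from the definition \eqref{xi} one sees $\xi(j_0,\mu_0)=\binom{\mu_0}{j_0+1}/\eta(j_0)$. For the first stated approximation, I would use \eqref{binom} with $m=\mu_0$ and $s=j_0+1$; since $j_0\lesssim\sqrt{2\ell}$ and $\mu_0\gg\ell^2$, we have $s^3/m^2=o(1)$ and the error factor is $1+o(1)$, so $\binom{\mu_0}{j_0+1}\approx\mu_0^{j_0+1}/(j_0+1)!$.

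Next I would apply Stirling to $(j_0+1)!\approx\sqrt{2\pi(j_0+1)}\,((j_0+1)/e)^{j_0+1}$, replacing $\sqrt{2\pi(j_0+1)}$ by $\sqrt{2\pi j_0}$ at the cost of a $1+o(1)$ factor since $j_0\to\infty$. For $\eta(j_0)$ I would invoke Lemma \ref{l.etaest} with $j=j_0$; the hypotheses are satisfied because $\ell=n^a\gg j_0\approx\sqrt{2\ell}$, $n-\ell\to\infty$, and $\ell=o(n^{1/2})$ follows from $a<\tfrac12$. Inverting the asymptotic for $\eta(j_0)$ and multiplying by $\mu_0^{j_0+1}/(j_0+1)!$ produces exactly the claimed expression except for an exponential factor whose exponent is
\[
-\bigl(\ell j_0+\tfrac{3\ell^2 j_0}{2n}+\varepsilon_a j_0-\varepsilon_b j_0^2\bigr),
\]
coming from \eqref{etaest}.

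The one calculation that needs any care is showing this exponent equals $-(\ell-\tfrac13+\varepsilon_0)j_0$ with $\varepsilon_0=o(1)$. Using $a<\tfrac12$, the term $3\ell^2 j_0/(2n)=o(j_0)$. Using \eqref{etaeps} and $j_0^2=2\ell(1+o(1))$ from \eqref{j0=}, $\varepsilon_a j_0=j_0^2/(2\ell)+O(j_0^3/\ell^2)=O(1)=o(j_0)$, while
\[
\varepsilon_b j_0^2=\frac{j_0^3}{6\ell}+\frac{j_0^4}{24\ell^2}+O\!\bigl(j_0^5/\ell^3\bigr)=\frac{j_0}{3}(1+o(1))+O(1),
\]
where the leading $j_0/3$ is precisely what supplies the $-\tfrac13$ correction in the target expression. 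All remaining errors are $o(j_0)$ and can be absorbed into $\varepsilon_0$. The only obstacle worth flagging is this last accounting: it is purely book-keeping, but one has to see that the $\varepsilon_b j_0^2$ term is \emph{exactly} of order $j_0$ (not smaller), since $j_0^2\asymp\ell$, and that its leading constant is $1/3$ rather than something else. Every other step is a direct substitution.
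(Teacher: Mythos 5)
Your proposal is correct and follows essentially the same route as the paper: expand $\binom{\m_0}{j_0+1}$ via \eqref{binom}, apply Stirling to $(j_0+1)!$, invoke Lemma \ref{l.etaest} for $\eta(j_0)$, and then the only real work is the bookkeeping showing $\e_a j_0-\e_b j_0^2=-\brac{\tfrac13-o(1)}j_0$ using $j_0^2=2\ell+O(\ell/\om)$, with the $3\ell^2 j_0/2n$ and $O(1)$ terms absorbed into $\e_0 j_0$ — exactly the computation the paper records in \eqref{rreecc}.
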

\begin{proof}
Using \eqref{j0=}, we compute
\begin{align}
\e_aj_0-\e_b^2j_0^2&=\frac{j_0^2}{2\ell}-\frac{j_0^3}{6\ell}-\frac{j_0^4}{24\ell^2} +O(j_0^{-1})\\
&=\brac{1+O\bfrac{1}{\om}}-\brac{\frac{j_0}3+O\bfrac{j_0}{\om}}+o(1)\\
&=-\brac{\frac13-\e_0}j_0\text{ where }\e_0=o(1).\label{rreecc}
\end{align}
The lemma now follows by using \eqref{binom} to deal with $\binom{\m_0}{j_0+1}$ and Stirling's approximation and \eqref{xival} to deal with $\eta(j_0)$. The factor $e^{3\ell^2j_0/2n}$ can be absorbed into the $\e_0j_0$ term.
\end{proof}

Now choose $\m_0$ as 
\begin{align}
\m_0&:=\frac{1}{e}\bfrac{(j_0+1)^{1+\frac1{j_0}}n^{\ell-\frac12(j_0-1)} e^{\ell-\frac13+\frac4{3j_0}}j_0^{\frac1{2j_0}}n^{\frac{b}{j_0}}} {(2\p)^{ \frac{j_0-1}{2j_0}} \ell^{\ell-\frac12j_0+1}}^{\frac{j_0}{j_0+1}} \\  
&\approx \frac{j_0}{e^{\frac43}\sqrt{2\p\ell}}\bfrac{n^{\ell-\frac12(j_0-1)} e^{\ell}}
{\ell^{\ell-\frac12(j_0-1)}}^{\frac{j_0}{j_0+1}}\label{valm0}\\
&=\bfrac{n}{\ell}^{\ell+o(\ell)}.\label{valm00}
\end{align}
Observe that with this choice, we have from Lemma \ref{l.xiapprox},
\beq{qaz3}{
\xi(j_0,\m_0)\approx n^b.
}
We now compare $\m_0$ and $\m_1$.
\begin{lemma}
\beq{m0m1}{
\m_1\geq \m_0(1-e^{-j_0}).
}
\end{lemma}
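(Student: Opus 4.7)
The plan is to rewrite the claim as $\omega_1\binom{n}{\ell-j_0}\le \mu_0\,e^{-j_0}$ (equivalently $\log\mu_0-\log\binom{n}{\ell-j_0}\ge j_0+\log\omega_1$), and then verify this by explicit comparison using the formula \eqref{valm0} for $\mu_0$ and Stirling's approximation for the binomial, with the defining inequality \eqref{j0def} for $j_0$ supplying the needed slack.

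First I would apply Stirling (valid since $\ell^2/n=o(1)$ when $a<1/2$) and expand $(\ell-j_0)\log(\ell/(\ell-j_0))=j_0-j_0^2/(2\ell)+O(j_0^3/\ell^2)$ to obtain
\[
\log\binom{n}{\ell-j_0}=(\ell-j_0)L+\ell-\frac{j_0^2}{2\ell}-\tfrac12\log(2\pi\ell)+o(1),
\]
with $L=\log(n/\ell)=\omega$. Then I would take logs of \eqref{valm0} to get
\[
\log\mu_0=\log j_0-\tfrac43-\tfrac12\log(2\pi\ell)+\frac{j_0}{j_0+1}\bigl[(\ell-\tfrac{j_0-1}{2})L+\ell\bigr].
\]

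The crux is the algebraic identity
\[
\frac{j_0}{j_0+1}\Bigl(\ell-\tfrac{j_0-1}{2}\Bigr)-(\ell-j_0)=\frac{j_0(j_0+3)-2\ell}{2(j_0+1)},
\]
which I would combine with $j_0(j_0+3)\ge 2\ell+4\ell/(\omega-1)$ from \eqref{j0def} to bound
\[
L\cdot\frac{j_0(j_0+3)-2\ell}{2(j_0+1)}\ge\frac{2\omega\ell}{(\omega-1)(j_0+1)}.
\]
Using $j_0^2\approx 2\ell(1+2/(\omega-1))$ to estimate $2\ell/(j_0+1)$, this quantity is asymptotically at least $j_0$, and once I collect the remaining contributions (the $-\ell/(j_0+1)$ coming from the $\ell$ piece in the brackets, the $j_0^2/(2\ell)$ Stirling correction, and the constants $\log j_0-\tfrac{4}{3}$) the bound $\log\mu_0-\log\binom{n}{\ell-j_0}\ge j_0+2\log\log n+o(1)$ will emerge, which is exactly $j_0+\log\omega_1+o(1)$.

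The hard part will be controlling the lower-order terms carefully enough that the margin is not eroded. The $\ell/(j_0+1)$ subtraction and the $j_0^2/(2\ell)$ Stirling correction must be fully absorbed by the slack built into the definition of $j_0$ via the $4\ell/(\omega-1)$ term in \eqref{j0def}; terms of order $O(j_0/\omega)$ and $O(\log n)$ must be tracked precisely. The choice $\omega_1=\log^2 n$ is calibrated so that $\log\omega_1=2\log\log n$ remains safely below this margin.
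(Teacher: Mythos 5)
Your setup is the same as the paper's (rewrite the claim as $\omega_1\binom{n}{\ell-j_0}\le \mu_0e^{-j_0}$, use \eqref{valm0}, the identity behind \eqref{obs1}, and Stirling), and your expansions of $\log\mu_0$ and $\log\binom{n}{\ell-j_0}$ are correct. The gap is in the final accounting: it does not close, and no amount of careful tracking of lower-order terms will make it close. The entire main term comes from the slack in \eqref{j0def}: by \eqref{j0=}, $j_0(j_0+3)-2\ell=\frac{4\ell}{\omega-1}+O(j_0)$, so
\begin{equation}
L\cdot\frac{j_0(j_0+3)-2\ell}{2(j_0+1)}=\frac{2\omega\ell}{(\omega-1)(j_0+1)}+O(\log n)=(1-o(1))\,j_0 ,
\end{equation}
while the term you list as a "remaining contribution," $-\frac{\ell}{j_0+1}$, is \emph{not} lower order: since $j_0^2\approx 2\ell$ it equals $-(\tfrac12-o(1))j_0$, i.e.\ half the main term. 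Adding the genuinely small pieces ($\frac{j_0^2}{2\ell}=1+o(1)$, $\log j_0-\tfrac43$, the cancelling $\tfrac12\log(2\pi\ell)$'s), your own expansion gives $\log\mu_0-\log\binom{n}{\ell-j_0}=(\tfrac12+o(1))j_0+O(\log n)$, which falls short of the target $j_0+\log\omega_1$ by about $j_0/2$. The $\frac{4\ell}{\omega-1}$ slack has already been spent producing the $(1-o(1))j_0$ main term and cannot be spent again to absorb $\ell/(j_0+1)$.

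It is worth seeing how this squares with the paper: the first inequality of \eqref{f1} uses $\binom{n}{\ell-j_0}\le (n/\ell)^{\ell-j_0}e^{\ell-j_0}$, which your (correct) Stirling estimate shows is too small by the factor $\bigl(\tfrac{\ell}{\ell-j_0}\bigr)^{\ell-j_0}=e^{(1+o(1))j_0}$; that is exactly where the paper's chain picks up its extra $e^{j_0}$. Carried out honestly, this route yields $\mu_0/\binom{n}{\ell-j_0}=e^{(1/2+o(1))j_0}$, hence only $\mu_1\ge\mu_0\bigl(1-e^{-(1/2-o(1))j_0}\bigr)$ rather than the stated $1-e^{-j_0}$. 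That weaker bound is still far more than is needed wherever \eqref{m0m1} is invoked (e.g.\ around \eqref{urat} and \eqref{sumsig} one only needs $\omega_1\binom{n}{\ell-j_0}=o(\mu_0/j_0)$), but as a proof of the lemma as literally stated your outline would fail at the last step, and you should either prove the weaker estimate or supply a genuinely different argument for the constant $1$ in the exponent.
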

\begin{proof}
Evaluating the exponents in \eqref{valm0}, we see from \eqref{j0=} that
\beq{obs1}{
\frac{(\ell-\frac12(j_0-1))j_0}{j_0+1}-(\ell-j_0)= \frac{\frac12(j_0^2+j_0)-\ell+j_0}{j_0+1}=\frac{2\ell}{(\om-1)(j_0+1)}-O(1).
}
 In particular this says that 
\beq{valmu0}{
\m_0\gtrsim \frac{j_0}{e^{\frac43}\sqrt{2\p\ell}}\bfrac{n}{\ell}^{\ell-j_0+ \frac{2\ell}{(\om-1)(j_0+1)}-O(1)}e^{\frac{\ell j_0}{j_0+1}}.
}
It follows from this that
\begin{multline}\label{f1}
\frac{\m_0}{\binom{n}{\ell-j_0}}\geq 
\frac{\m_0\ell^{\ell-j_0}e^{j_0-\ell}}{n^{\ell-j_0}}\gtrsim \frac{j_0}{e^{\frac43}\sqrt{2\p\ell}}\bfrac{n}{\ell}^{\frac{2\ell}{(\om-1)(j_0+1)}-O(1)} e^{j_0-\frac{\ell}{j_0+1}}\geq\\
 \frac{j_0n^{-O(1)}}{e^{\frac43}\sqrt{2\p\ell}} \brac{\bfrac{n}{\ell}^{\frac{2}{\om}}e^{-1}}^{\frac{\ell}{j_0+1}} e^{j_0} = \frac{j_0n^{-O(1)}e^{\frac{\ell}{j_0+1}+j_0}}{e^{\frac43}\sqrt{2\p\ell}}\geq e^{j_0},
\end{multline}
and the lemma follows.
\end{proof}
We now compare $\xi(j_0,\m_0)$ and $\xi(j_0-1,\m_0)$.
\begin{lemma}
\beq{43}{
\frac{\xi(j_0-1,\m_0)}{\xi(j_0,\m_0)}=\Theta(1).
}
\end{lemma}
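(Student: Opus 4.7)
The plan is to reduce the ratio to an explicit expression, then read off its asymptotics from the chosen form of $\m_0$. Unfolding $\xi(j,t)=\frac{\binom{\m_0}{j_0+1}\binom{t}{j}}{\eta(j)\binom{\m_0}{j_0}}$ and taking the ratio at $t=\m_0$, the $\binom{\m_0}{j_0+1}$ factors cancel, leaving
\[
\frac{\xi(j_0-1,\m_0)}{\xi(j_0,\m_0)}=\frac{\eta(j_0)}{\eta(j_0-1)}\cdot\frac{\binom{\m_0}{j_0-1}}{\binom{\m_0}{j_0}}=\binom{n}{\ell-j_0+1}\cdot\frac{j_0}{\m_0-j_0+1},
\]
where the first equality uses the telescoping definition of $\eta$ and the second is the standard ratio-of-binomials identity. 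Thus it suffices to show that $j_0\binom{n}{\ell-j_0+1}/(\m_0-j_0+1)=\Theta(1)$.

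To verify this, I would first note that $\m_0\gg j_0$ (from \eqref{valmu0} together with $j_0^2\sim 2\ell$ by \eqref{j0=}), so $\m_0-j_0+1\sim\m_0$. Since $a<\tfrac12$ forces $\ell^2/n=o(1)$, the approximation \eqref{binom} applies and gives $\binom{n}{\ell-j_0+1}\sim n^{\ell-j_0+1}/(\ell-j_0+1)!$. Substituting Stirling's formula for the factorial together with the expression \eqref{valm0} for $\m_0$, and expanding $(\ell-j_0+1)^{\ell-j_0+1}=\ell^{\ell-j_0+1}\exp\bigl((\ell-j_0+1)\log(1-(j_0-1)/\ell)\bigr)$, the dominant powers $n^{\ell-j_0+1}/\ell^{\ell-j_0+1}$ in $\binom{n}{\ell-j_0+1}$ cancel against the $(n/\ell)^{\ldots}$ factor in $\m_0$ coming from the $j_0/(j_0+1)$-th power in \eqref{valm0}, up to lower-order corrections absorbed into $1+o(1)$.

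What remains is a pure constant, which is exactly the point of the tuning of $\m_0$ in \eqref{valm0}: it was chosen so that $\xi(j_0,\m_0)\approx n^b$ with the specific sub-exponential prefactor $j_0/(e^{4/3}\sqrt{2\p\ell})$ appearing in that line, and the same calculation applied at $j_0-1$ produces $\xi(j_0-1,\m_0)\approx 2e^{4/3}n^b$, consistent with the value asserted in bullet P2. The main obstacle is purely the careful bookkeeping of sub-exponential factors in the Stirling expansions of $(\ell-j_0+1)!$ and in the $(j_0/(j_0+1))$-th power in \eqref{valm0}; no new ideas are required beyond those already used to prove Lemma \ref{l.xiapprox} and to derive \eqref{valm0}.
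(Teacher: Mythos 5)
Your opening reduction coincides with the paper's: cancelling the $\binom{\m_0}{j_0+1}$ factors and telescoping $\eta$ turns the claim into $j_0\binom{n}{\ell-j_0+1}=\Theta(\m_0-j_0+1)=\Theta(\m_0)$, and your preliminary remarks ($\m_0\gg j_0$, and $\ell^2/n=o(1)$ so that \eqref{binom} applies) are fine. The gap is the sentence asserting that the powers $n^{\ell-j_0+1}/\ell^{\ell-j_0+1}$ in $\binom{n}{\ell-j_0+1}$ cancel against the $(n/\ell)$-power in \eqref{valm0} ``up to lower-order corrections absorbed into $1+o(1)$.'' They do not. The exponent of $n/\ell$ in \eqref{valm0} is $(\ell-\tfrac12(j_0-1))\tfrac{j_0}{j_0+1}$, and its difference from $\ell-j_0+1$ is, as in \eqref{obs1}, $\tfrac{2\ell}{(\om-1)(j_0+1)}+O(1)$, which is of order $j_0/\log n\to\infty$; since the base is $n/\ell=e^{\om}$, the uncancelled factor is of size $e^{\Theta(\ell/(j_0+1))}=e^{\Theta(j_0)}$, nowhere near $1+o(1)$. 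This residual power must then be weighed against the purely exponential factors ($e^{\ell-j_0+1}$ from Stirling, the $e^{j_0}$ coming from $(\ell-j_0+1)^{\ell-j_0+1}\approx\ell^{\ell-j_0+1}e^{-j_0}$, and $e^{\ell j_0/(j_0+1)}$ from \eqref{valm0}), each of which is also $e^{\Theta(j_0)}$, and whether the net exponent is bounded is decided only by the precise definition of $j_0$ in \eqref{j0def}--\eqref{j0=} together with the identity $\log\bfrac{ne}{\ell}=\om+1$ coming from \eqref{omega}. That is exactly where the paper does its work: it reduces the ratio to $e^{4/3}\exp\brac{\tfrac{j_0(j_0+3)}{2(j_0+1)}}\bfrac{\ell}{ne}^{\frac{2\ell}{(\om-1)(j_0+1)}+O(1)}$ and only then extracts boundedness of the exponent from \eqref{j0=}. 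Your write-up never invokes \eqref{j0def}, \eqref{j0=} or the value of $\om$, so it cannot reach the conclusion; and the closing appeal to the estimate $\xi(j_0-1,\m_0)\approx2e^{4/3}n^b$ of {\bf P2} is circular, since in the paper that estimate (\eqref{qaz2}) is itself deduced from this lemma.

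A further caution: what survives the cancellation is not a ``pure constant'' that falls out automatically from the tuning of $\m_0$. The final exponent is a difference of two quantities each of size $\Theta(j_0)$, and even in the paper it is only confined to a bounded window by the minimality in the definition of $j_0$ (the slack $j_0(j_0+3)-2\ell-\tfrac{4\ell}{\om-1}\in[0,2j_0+2]$ enters after division by $2(j_0+1)$); the two $\Theta(j_0)$-sized contributions must be compared exactly, not up to ``lower-order corrections,'' and carrying out that comparison is the entire content of the proof. Since every factor you propose to absorb is exponentially large in $j_0$, an argument at the level of precision you give cannot distinguish $\Theta(1)$ from $e^{\pm\Theta(j_0)}$, so the key step is missing.
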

\begin{proof}
  We have that
\[
\frac{\xi(j_0-1,\m_0)}{\xi(j_0,\m_0)}= \frac{\binom{\m_0}{j_0-1}\binom{n}{\ell-j_0+1}}{\binom{\m_0}{j_0}}.
\]
Using \eqref{binom} and applying Stirling's formula to $(\ell-j_0+1)!$, we get that
  \begin{multline}
    \frac{\xi(j_0-1,\m_0)}{\xi(j_0,\m_0)}\approx \frac{j_0(ne)^{\ell-j_0+1}}{\sqrt{2\p\ell}\m_0(\ell-j_0+1)^{{\ell-j_0+1}}}\\
\approx \frac{j_0(ne)^{\ell-j_0+1}}{\sqrt{2\p\ell}(\ell-j_0+1)^{{\ell-j_0+1}}}\cdot \frac{e^{\frac43}\sqrt{2\p\ell}}{j_0}\cdot \bfrac{\ell^{\ell-\frac12(j_0-1)}}{n^{\ell-\frac12(j_0-1)} e^{\ell}}^{\frac{j_0}{j_0+1}},
  \end{multline}
where at the end we have used \eqref{valm0}.
Now
\begin{align}
(\ell-j_0+1)^{\ell-j_0+1}&=\ell^{\ell-j_0+1} \brac{1-\frac{j_0-1}{\ell}}^{\ell-j_0+1}\\
&=\ell^{\ell-j_0+1}\exp\set{-(\ell-j_0+1)\brac{\frac{j_0-1}{\ell} +\frac{(j_0-1)^2}{2\ell^2}+O\bfrac{j^3}{\ell^3}}}\\
&\approx\ell^{\ell-j_0+1}e^{-j_0}.
\end{align}

So, we can write 
\begin{align}
\frac{\xi(j_0-1,\m_0)}{\xi(j_0,\m_0)}&\approx e^{\frac43}\bfrac{\ell}{ne}^{(\ell-\frac12(j_0-1))\frac{j_0}{j_0+1}-(\ell-j_0+1)} e^{j_0-\frac12(j_0-1)\frac{j_0}{j_0+1}}\\
&= e^{\frac43} e^{\frac{j_0(j_0+3)}{2(j_0+1)}} \bfrac{\ell}{ne}^{\frac{2\ell}{(\om-1)(j_0+1)}+O(1)}.
\end{align}
Our choice of $\omega$ implies that
$$\frac{2\ell}{\om-1}\log\bfrac{ne}{\ell}=\frac{2\ell(\om+1)}{\om-1}.$$
And then we see that
\begin{align}
&\frac{\xi(j_0-1,\m_0)}{\xi(j_0,\m_0)}\\
&\approx e^{\frac43}\cdot \exp\set{\frac{1}{2(j_0+1)}\brac{j_0(j_0+3)-\frac{2\ell}{\om-1} \log\bfrac{ne}{\ell}+O(\log n)}}\\
&\approx e^{\frac43}\cdot \exp\set{\frac{1}{2(j_0+1)}  \brac{j_0(j_0+3)-\frac{2\ell(\om+1)}{\om-1}}}\\
&=\Theta(1).
\end{align}
Here we have used \eqref{j0=}.
\end{proof}
\subsection{Expected occupancies}\label{occop}
Now we analyze the random variables $X_{j,t}$ and $Y_{j,t}$ in expectation.  The next lemma helps us deal with the recurrences \eqref{re1}, \eqref{re2}.
\begin{lemma}\label{rec1}
Let $\a_{-1}=\b_{-1}=1$.
\begin{enumerate}[{\bf (a)}]
\item Let $x_{j,t}$ satisfy (i) $x_{0,t}\leq \a_0+t$, (ii) $x_{j,0}\leq \a_j$, (iii) $x_{j,t}-x_{j,t-1}\leq \b_{j-1}x_{j-1,t-1}$ for $j\geq 1$, where $\b_j\geq 0$ for $j\geq 0$. Then when $j\geq 1$ we have
\beq{lemref}{
x_{j,t}\leq \sum_{i=0}^{j+1}\a_{j-i}\binom{t}{i}\prod_{s=j-i}^{j-1}\b_s.
}
\item Let $y_{j,t}$ satisfy (i) $y_{j^*,t}\geq \a_{j^*}$, (ii) $y_{j,0}\geq 0$ for $j>j^*$, and (iii) $y_{j,t}-y_{j,t-1}\geq \b_{j-1}y_{j-1,t-1}$ for $j>j^*,t\geq 1$.  Then for $j\geq j^*$ we have 
\beq{lemrefy}{
y_{j,t}\geq \a_{j^*}\binom{t}{j-j^*}\prod_{s=j^*}^{j-1}\b_s.
}
\end{enumerate}
\end{lemma}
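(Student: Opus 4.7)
The plan is to prove both parts by induction on $j$, in each case first unrolling the one-step recurrence in $t$ and then applying the inductive hypothesis together with the hockey-stick identity $\sum_{s=0}^{t-1}\binom{s}{i}=\binom{t}{i+1}$.

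For part (a), the first step is to iterate (iii) in the time variable to obtain
\[
x_{j,t}\leq x_{j,0}+\beta_{j-1}\sum_{s=0}^{t-1}x_{j-1,s}\leq \alpha_j+\beta_{j-1}\sum_{s=0}^{t-1}x_{j-1,s}.
\]
The base case $j=1$ follows immediately by plugging in (i): $x_{1,t}\leq \alpha_1+\beta_0\sum_{s=0}^{t-1}(\alpha_0+s)=\alpha_1+\alpha_0\beta_0\binom{t}{1}+\beta_0\binom{t}{2}$, which matches \eqref{lemref} with the conventions $\alpha_{-1}=\beta_{-1}=1$ (so the $i=2$ term contributes $\binom{t}{2}\beta_0$). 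For the inductive step, I would substitute the inductive formula for $x_{j-1,s}$ into the unrolled bound, interchange the sums over $s$ and the index $i$, and evaluate $\sum_{s=0}^{t-1}\binom{s}{i}=\binom{t}{i+1}$. After the reindexing $i\mapsto i+1$, the $i=0$ term is just $\alpha_j$, and the remaining terms (for $i=1,\dots,j+1$) acquire an extra $\beta_{j-1}$ factor, which extends the product $\prod_{s=j-i}^{j-2}\beta_s$ to $\prod_{s=j-i}^{j-1}\beta_s$; this is exactly \eqref{lemref}.

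Part (b) is a mirror-image induction on $j$, starting from the base $j=j^*$, where (i) gives $y_{j^*,t}\geq\alpha_{j^*}=\alpha_{j^*}\binom{t}{0}$ (the product over an empty range being $1$). Unrolling (iii) with (ii) gives $y_{j,t}\geq \beta_{j-1}\sum_{s=0}^{t-1}y_{j-1,s}$, and substituting the inductive lower bound $y_{j-1,s}\geq \alpha_{j^*}\binom{s}{j-1-j^*}\prod_{r=j^*}^{j-2}\beta_r$ and again applying the hockey-stick identity produces the claimed expression.

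There is no real obstacle here; the only thing to watch is the bookkeeping of the index ranges in the products and binomial coefficients (particularly that the degenerate conventions $\alpha_{-1}=\beta_{-1}=1$ exactly absorb the highest-order term $\binom{t}{j+1}$ in part (a), and that the empty product handles the base $j=j^*$ in part (b)). Both inductions are clean consequences of unrolling plus the hockey-stick identity.
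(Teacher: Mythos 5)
Your proposal is correct and follows essentially the same route as the paper's own proof: induction on $j$ (base case $j=1$ for (a), $j=j^*$ for (b)), unrolling the recurrence in $t$, and summing binomials via the hockey-stick identity, with the conventions $\a_{-1}=\b_{-1}=1$ absorbing the top-order term exactly as you note. No gaps.
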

\begin{proof}
(a) Now we have
$$x_{1,t}\leq\a_1+ \b_0\sum_{\t=1}^t(\a_0+\t-1)=\a_1+\a_0\b_0t+\b_0\binom{t}{2}.$$
So equation \eqref{lemref} is true for $j=1$. Assume inductively that it is true for $j-1$ where $j\geq 2$. Then
\begin{align}
x_{j,t}&\leq\a_j+\b_{j-1}\sum_{\t=1}^tx_{j-1.\t-1}\\
&\leq\a_j+\b_{j-1}\sum_{\t=1}^t\sum_{i=0}^{j}\a_{j-1-i}\binom{\t-1}{i} \prod_{s=j-i-1}^{j-2}\b_s\\
&=\a_j+\b_{j-1}\sum_{i=0}^j\a_{j-1-i}\binom{t}{i+1} \prod_{s=j-i-1}^{j-2}\b_s\\
&=\a_j+\sum_{i=0}^j\a_{j-1-i}\binom{t}{i+1} \prod_{s=j-i-1}^{j-1}\b_s\\
&=\a_j+\sum_{i=1}^{j+1}\a_{j-i}\binom{t}{i} \prod_{s=j-i-1}^{j-1}\b_s.
\end{align}
(b) We have $y_{j^*,t}\geq \a_{j^*}$ and so equation \eqref{lemrefy} is true for $j=j^*$. Assume inductively that it is true for $j-1$ where $j>j^*$. Then,
\begin{align}
y_{j,t}&\geq\b_{j-1}\sum_{\t=1}^tx_{j-1.\t-1}\\
&\geq\b_{j-1}\sum_{\t=1}^t\a_{j^*}\binom{\t-1}{j-1-j^*} \prod_{s=j^*}^{j-2}\b_s\\
&=\a_{j^*}\binom{t}{j-j^*}\prod_{s=j^*}^{j-1}\b_s.
\end{align}
\end{proof}

\subsubsection{Upper Bound}\label{upperbound}
To use Lemma \ref{rec1} for an upper bound on $\E(X_{j,\t_0+t}),j\geq 0$ we use the definition of the stopping time $\t_0$ to define
\[
\a_j=|\cO_{k+j,\t_0}|\leq \zeta(j,\m_0)=\frac{\binom{\m_0}{j_0+1}}{j_0\binom{\m_0}{j_0-j}\eta(j)}\text{ and }\b_j=\frac{1}{\binom{n}{\ell-j}}\text{ for }j\geq 0.
\]
Thus, for $0\leq j\leq j_0$ and $\m_1\leq t\leq \m_0$,
\begin{align}
\E(X_{j,\t_0+t})&\leq \sum_{i=0}^{j+1}|\cO_{k+j-i,\t_0}|\binom{t}{i} \prod_{s=j-i}^{j-1}\frac{1}{\binom{n}{\ell-s}}\\
&\leq \sum_{i=0}^{j+1}\z(j-i,\m_0)\binom{t}{i} \prod_{s=j-i}^{j-1}\frac{1}{\binom{n}{\ell-s}}\\
&=\sum_{i=0}^{j+1}\frac{\binom{\m_0}{j_0+1}}{j_0\binom{\m_0}{j_0-j+i}\eta(j-i)} \binom{t}{i} \prod_{s=j-i}^{j-1}\frac{1}{\binom{n}{\ell-s}} \label{check1}\\
&\lesssim \frac{\binom{\m_0}{j_0+1}}{\binom{\m_0}{j_0}} \binom{t}{j} \prod_{s=0}^{j-1}\frac{1}{\binom{n}{\ell-s}}\label{check3}\\
&=\xi(j,t).
\end{align}
To go from \eqref{check1} to \eqref{check3} we let $u_i$ denote the summand in \eqref{check1} and observe that
\beq{urat}{
\frac{u_{i+1}}{u_i}=\frac{t-i}{i+1}\cdot\frac{j_0-j+i+1}{\m_0-j_0+j-i}.
}
This implies that
\[
\frac{u_j}{u_i}\geq \brac{1-\frac{\om_1\binom{n}{\ell-j_0}}{\m_1}}^{j_0}=1-o(1),
\]
whenever $i<j$. Here we have used \eqref{m0m1}. Furthermore, \eqref{urat} implies that $u_{j+1}\lesssim u_j$ for $j\leq j_0$. This verifies \eqref{check3}.

So:
\begin{lemma}\label{Xbounds}
  We have
\beq{nn2}{
\E(X_{j_0,\t_0+\m_1})\lesssim \xi(j_0,\m_1)\approx \xi(j_0,\m_0)\approx n^b
}
and 
\beq{qaz2}{
\E(X_{j_0-1,\t_0+\m_1})\lesssim \xi(j_0-1,\m_0)\approx\xi(j_0,\m_0)\times\Theta(1)=\Theta(n^b).
}
\end{lemma}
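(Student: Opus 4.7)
The plan is to invoke Lemma \ref{rec1}(a) applied to the recurrence \eqref{re1} for $\E(X_{j, \t_0 + t})$. The ingredients are $\a_j := \zeta(j, \m_0)$, which upper-bounds $|\cO_{k+j, \t_0}|$ by the very definition of the stopping time $\t_0$ for $0 \leq j < j_0$, together with the step-size coefficients $\b_j := 1/\binom{n}{\ell - j}$ read off from \eqref{re1}. Substituting yields
\[
\E(X_{j, \t_0 + t}) \leq \sum_{i=0}^{j+1} \zeta(j-i, \m_0) \binom{t}{i} \prod_{s=j-i}^{j-1} \frac{1}{\binom{n}{\ell-s}}
\]
for $0 \leq j \leq j_0$ and $t \leq \m_0$.

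The next step will be to show that this sum is asymptotic to $\xi(j, t)$, not merely bounded by a large multiple of it. Writing $u_i$ for the $i$th summand, a direct calculation from the definition of $\zeta$ gives
\[
\frac{u_{i+1}}{u_i} = \frac{t - i}{i + 1} \cdot \frac{j_0 - j + i + 1}{\m_0 - j_0 + j - i}.
\]
Telescoping this identity, and invoking $\m_1 \geq \m_0(1 - e^{-j_0})$ from \eqref{m0m1} together with $t \geq \m_1$, I would argue that $u_i = (1 + o(1)) u_j$ uniformly for $0 \leq i \leq j$, while the same ratio applied at $i=j$ shows $u_{j+1} \lesssim u_j$. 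Since $j_0 u_j = \xi(j,t)$ by the definition \eqref{xi}, summing the $j+2$ comparable terms gives $\E(X_{j, \t_0 + t}) \lesssim \xi(j, t)$.

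With this general bound in hand, the two conclusions follow by plugging in $t = \m_1$ and $j \in \{j_0 - 1, j_0\}$. Estimate \eqref{nn2} uses $\binom{\m_1}{j_0}/\binom{\m_0}{j_0} = 1 - o(1)$ (again from \eqref{m0m1}) to pass from $\xi(j_0, \m_1)$ to $\xi(j_0, \m_0)$, and then \eqref{qaz3} to identify the latter with $n^b$. Estimate \eqref{qaz2} follows analogously, combined with the already-derived ratio \eqref{43} to compare $\xi(j_0 - 1, \m_0)$ to $\xi(j_0, \m_0)$.

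The principal difficulty will be keeping the telescoping estimate tight enough: the product of $j_0$ ratio factors from $i = 0$ to $i = j \leq j_0$ is only $1 + o(1)$ because $\m_1 / \m_0$ itself differs from $1$ by something exponentially small in $j_0$. Any weaker gap between $\m_0$ and $\m_1$ would introduce an error factor that grows with $j_0$ and would destroy the $\lesssim$ conclusion. This is precisely why the choice $\om_1 = \log^2 n$ was made in the definition \eqref{m1} of $\m_1$, and the estimate \eqref{m0m1} is exactly the quantitative input needed to make the argument go through.
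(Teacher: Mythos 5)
Your proposal is correct and follows essentially the same route as the paper: apply Lemma \ref{rec1}(a) with $\a_j=\zeta(j,\m_0)$, $\b_j=1/\binom{n}{\ell-j}$, control the summands via the ratio $u_{i+1}/u_i$ together with \eqref{m0m1}, and conclude with \eqref{qaz3} and \eqref{43}. No substantive differences from the paper's argument.
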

\begin{proof}
  These come from \eqref{qaz3}, \eqref{m0m1}, \eqref{43} and \eqref{check3}.
\end{proof}

\subsubsection{Lower Bound}

To use Lemma \ref{rec1} for a lower bound on $\E(Y_{j,t}),j\geq 0$ we use $y_{j,t}=Y_{j,t}$ for $j\geq j^*$, and take
\beq{low1}{
\a_{j^*}=|\cO_{k+j^*,\t_0}|\geq \z(j^*,\m_0)\text{ and }\b_j=\frac{1}{\binom{n}{\ell-j}},
}
to get:
\begin{lemma} \label{l.Ybound}For $j\geq 0$,
\begin{equation}\label{low2}
\E(Y_{j,t})\geq
\z(j^*,\m_0)\binom{t}{j-j^*}\prod_{s=0}^{j-j^*-1}\frac{1}{\binom{n}{\ell-j^*-s}}= \frac{\binom{\m_0}{j_0+1}}{j_0\eta(j)}\frac{\binom{t}{j-j^*}}{\binom{\m_0}{j_0-j^*}}.
\end{equation}
\end{lemma}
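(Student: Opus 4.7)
The plan is to apply Lemma \ref{rec1}(b) directly to the sequence $y_{j,t}=\E(Y_{j,t})$ (indexed for $j\geq j^*$), with the parameter choices already flagged in \eqref{low1}: take $\a_{j^*}=\z(j^*,\m_0)$ and $\b_s=1/\binom{n}{\ell-s}$. Once the three hypotheses of that lemma are verified, the stated lower bound drops out, and the equivalent product-form on the right follows by a short algebraic rewrite.

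First I would verify the base condition $y_{j^*,t}\geq\a_{j^*}$. By the defining property \eqref{defzeta} of the stopping time $\t_0$, at time $\t_0$ level $k+j^*$ already carries at least $\z(j^*,\m_0)$ occupied vertices, so $Y_{j^*,0}$ dominates $|\cO_{k+j^*,\t_0}|\geq\z(j^*,\m_0)$; since $Y_{j^*,t}$ is nondecreasing in $t$ (both $\Theta_t^2$ and each $X_{r,t}$ grow monotonically), this persists for all $t\geq 0$ after taking expectations. The condition $y_{j,0}\geq 0$ for $j>j^*$ is immediate. The recurrence hypothesis comes from taking expectations in \eqref{re2} and approximating the sum in the denominator by its leading term, which produces exactly $\b_{j-1}=1/\binom{n}{\ell-j+1}$, equivalently $\b_s=1/\binom{n}{\ell-s}$.

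With the hypotheses in hand, Lemma \ref{rec1}(b) will give $\E(Y_{j,t})\geq\z(j^*,\m_0)\binom{t}{j-j^*}\prod_{s=j^*}^{j-1}\frac{1}{\binom{n}{\ell-s}}$, and the substitution $s'=s-j^*$ rewrites the product as $\prod_{s'=0}^{j-j^*-1}\frac{1}{\binom{n}{\ell-j^*-s'}}$, matching the first expression in the statement. For the second expression I would expand $\z(j^*,\m_0)=\binom{\m_0}{j_0+1}/(j_0\binom{\m_0}{j_0-j^*}\eta(j^*))$ and telescope the product $\eta(j^*)\cdot\prod_{s'=0}^{j-j^*-1}\binom{n}{\ell-j^*-s'}$ into $\eta(j)=\prod_{s=0}^{j-1}\binom{n}{\ell-s}$, cancelling the $\binom{\m_0}{j_0-j^*}$ against the denominator of $\z$. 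The main thing to watch out for is more bookkeeping than obstacle: one must reconcile the two consistent indexings for $\b$ (the form $\b_{j-1}=1/\binom{n}{\ell-j^*-j+1}$ naturally suggested by \eqref{re2} versus the form $\b_s=1/\binom{n}{\ell-s}$ needed to make the final product telescope cleanly into $\eta(j)$), which amounts to keeping track of where the origin $s=0$ sits relative to $j^*$; no further conceptual difficulty remains.
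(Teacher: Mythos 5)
Your proposal is correct and is essentially the paper's own (very terse) proof: Lemma \ref{rec1}(b) applied with $\a_{j^*}=|\cO_{k+j^*,\t_0}|\geq\z(j^*,\m_0)$ and $\b_j=1/\binom{n}{\ell-j}$ exactly as flagged in \eqref{low1}, followed by the telescoping of $\eta(j^*)\prod_{s=j^*}^{j-1}\binom{n}{\ell-s}$ into $\eta(j)$. Your reconciliation of the origin shift between the indexing in \eqref{re2} and that in \eqref{low1} is precisely the bookkeeping the paper glosses over, so nothing further is needed.
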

\subsection{Concentration}  We can obtain w.h.p.~upper bounds on the sizes of sets $\cO_{j_0+j,t}$ by applying Markov's inequality to the random variables $X_{j,t}$. In this section, we obtain suitable w.h.p.~lower bounds on the random variables $Y_{j,t}$.
Let
\begin{equation}\label{NjLj}
N_j=\binom{n}{\ell-j}\text{ and }L_j=\frac{\binom{\m_0}{j_0+1}}{j_0 \eta(j)\binom{\m_0}{j_0-j^*}}\text{ for }j\geq 0.
\end{equation}
Observe from Lemma \ref{l.Ybound} that we have
\begin{equation}
  \label{Y1bound}
  \E(Y_{j+1,t})\geq \frac{\binom{t}{j+1-j^*}L_{j}}{N_j}.
\end{equation}
We will establish lower concentration of the level sizes inductively, starting from level $j^*+1$.  For each level $j>j^*$, there will be a time $t_j$ past which we have a good w.h.p.~lower bound on the level size, which can then be used inductively for the next level.

We define $\hht_j,j^*+1\leq j\leq j_0-1$ by 
\begin{equation}
  \hht_j=\min\set{t:\frac{L_j\binom{t}{j-j^*}}{N_j}\geq n}.
\end{equation}
And then define $t_j,j^*+1\leq j\leq j_0-1$ by 
\begin{equation}
  t_j=\max\set{\hht_j,\om_2t_{j-1}},\quad\om_2=n^{c}.
\end{equation}
We also let $t_{j^*}=\t_0$. 

Our definition ensures that $t_j\ll t_{j+1}$ for all $j^*<j<j_0-1$.  The following Lemma shows that the $t_j$'s don't grow to large.
\begin{lemma}
For $j^*+1\leq j\leq j_0-1$,
\beq{summary}{
\bfrac{n}{\ell}^{\ell-O(j_0)}\leq t_j\leq \om_2^{j-j^*}\bfrac{\ell}{n}^{j_0/3}\m_0\ll \m_0.
}
\end{lemma}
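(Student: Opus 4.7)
The plan is to establish both bounds in \eqref{summary} by unpacking the definitions of $\hht_j$ and $t_j$ and invoking the asymptotic estimates of $\m_0$ (from \eqref{valm0}) and of $\eta(j)$ (from Lemma \ref{l.etaest}) already proved. The key observation is that $\hht_j$ is, up to constants, the inverse of a polynomial of degree $j-j^*$ in $t$, so bounding $\hht_j$ reduces to evaluating one explicit expression.

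For the upper bound, iterating the recurrence $t_j = \max\{\hht_j, \om_2 t_{j-1}\}$ gives
\[
t_j \leq \max\Big(\max_{j^* < j' \leq j}\om_2^{j-j'}\hht_{j'},\ \om_2^{j-j^*}\t_0\Big),
\]
so it suffices to prove (i) $\hht_{j'}\leq \om_2^{j-j'}(\ell/n)^{j_0/3}\m_0$ for each $j^*<j'\leq j$, and (ii) an upper bound on $\t_0$ compatible with $\om_2^{j-j^*}\t_0$. For (i), by the defining minimality of $\hht_{j'}$, it is enough to check that $\frac{L_{j'}\binom{t}{j'-j^*}}{N_{j'}}\geq n$ at $t=\om_2^{j-j'}(\ell/n)^{j_0/3}\m_0$. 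Expanding $\binom{t}{s}\approx t^s/s!$, using the telescoping identity $\eta(j'+1)=\eta(j')N_{j'}$, and substituting \eqref{valm0}, this reduces to a concrete inequality that holds with room to spare, essentially because $\xi(j_0,\m_0)\approx n^b$ forces $\xi(j',\cdot)$ to grow very rapidly as $j'$ decreases from $j_0$ toward $j^*$ while $t$ remains close to $\m_0$. For (ii), the defining stopping condition of $\t_0$ combined with the fast filling of $\cL_k$ established by the earlier theorems in this section forces $\t_0$ to be well within the required range.

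For the lower bound, since $t_j\geq \hht_j$ whenever $j>j^*$, it suffices to lower-bound $\hht_j$. The minimality of $\hht_j$ gives $\frac{L_j\binom{\hht_j-1}{j-j^*}}{N_j}<n$, and together with $\binom{\hht_j-1}{s}\leq \hht_j^{s}/s!$ this yields
\[
\hht_j^{j-j^*}\geq \frac{n(j-j^*)!N_j}{L_j}(1-o(1)).
\]
Unpacking $L_j$, telescoping $\eta(j)$ via Lemma \ref{l.etaest}, and substituting \eqref{valm0}, the right-hand side scales like $(n/\ell)^{(\ell-O(j_0))(j-j^*)}$; taking $(j-j^*)$-th roots then gives $\hht_j\geq (n/\ell)^{\ell-O(j_0)}$.

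The main obstacle is the careful bookkeeping of asymptotic factors. Because $\m_0$ has been tuned so that $\xi(j_0,\m_0)\approx n^b$ is only a moderate quantity, small perturbations in $\eta(\cdot)$, $\binom{\m_0}{\cdot}$, or $\m_0$ propagate to nontrivial changes in $\hht_j$; the $o(1)$ error terms from \eqref{valm0} and Lemma \ref{l.etaest} must be tracked carefully so that they contribute only to the slack $O(j_0)$ in the exponent of the lower bound and to the compounding factor $\om_2^{j-j^*}$ in the upper bound. A secondary subtlety is controlling the random stopping time $\t_0$, which must be bounded with enough precision that the factor $\om_2^{j-j^*}\t_0$ does not spoil the upper bound; here the parameter inequalities at the top of this section (in particular $a<2c$) are the reason why the accumulated factor $\om_2^{j-j^*}$ can be absorbed while still giving $t_j\ll\m_0$.
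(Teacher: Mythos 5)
Your route is essentially the paper's: for the lower bound you invert the defining inequality of $\hht_j$ and estimate $N_j/L_j$ through $\eta(\cdot)$ (Lemma \ref{l.etaest}) and \eqref{valm0}, exactly as the paper does to reach $\hht_j\geq\bfrac{n}{\ell}^{\ell-O(j_0)}$ and then uses $t_j\geq\hht_j$; for the upper bound you check the threshold at $t\approx\bfrac{\ell}{n}^{j_0/3}\m_0$ and absorb the accumulated $\om_2$ factors from unrolling the recursion, which is the paper's $t_j\leq\om_2^{j-j^*}\hht_j\leq \om_2^{j-j^*}\bfrac{\ell}{n}^{j_0/3}\m_0$. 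Be aware, though, that the place where you write ``holds with room to spare'' is where all of the paper's actual work sits: the explicit bound $\frac{L_j}{N_j}\leq\bfrac{\ell}{n}^{\ell(j-j^*)+\frac12(j_0j^*-j^2)-O(j_0)}$, the observation $\frac{j^2-j_0j^*}{j-j^*}\leq j_0$ for the lower bound, and the computation showing $\frac{\m_0}{\hht_j}\gtrsim\bfrac{n}{\ell}^{j_0/3}$ after taking the $(j-j^*)$-th root. As written your argument is a correct plan rather than a proof, and its validity rides entirely on those computations. Also, the parameter inequality you invoke for absorbing $\om_2^{j-j^*}$ is not the one the paper uses: the stated reason that the right side of \eqref{summary} is $o(\m_0)$ is constraint (2), $2c<1-a$, not $a<2c$ (which is used for $\d_j=o(1)$).

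The one step that is genuinely unavailable is your item (ii). Nothing proved earlier bounds the stopping time $\t_0$: Theorems \ref{t.full}--\ref{t.notall} concern levels $k\leq \e^3 n$ and say nothing about when level $k=n-\ell$ fills; indeed the stopping-time construction exists precisely because the paper cannot control the middle of the cube, and no a priori upper bound on $\t_0$ is ever established (nor could ``fast filling of $\cL_k$'' be cited for it). The resolution is that the clock in this section runs from $\t_0$: the variables $X_{j,t}$, $Y_{j,t}$ are indexed by time elapsed after $\t_0$, so the seed of the recursion is immaterial and the unrolled bound one needs is just $t_j\leq \om_2^{j-j^*}\max_{j^*<j'\leq j}\hht_{j'}$, which is how the paper proceeds (it writes $t_j\leq\om_2^{j-j^*}\hht_j$ without any reference to $\t_0$). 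If instead you insist on reading $t_{j^*}=\t_0$ as an absolute time that must be controlled, then your proposed justification fails and the argument has a real gap at that point; so either adopt the relative-time reading or replace (ii) by an honest treatment of the seed.
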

\begin{proof}
We will first need to bound $\hht_j$ from above and below. Thus we estimate
\begin{align}
\frac{L_j}{N_j}&=\frac{\binom{\m_0}{j_0+1}}{j_0\eta(j+1)\binom{\m_0}{j_0-j^*}}\\
&\approx \frac{\m_0^{j^*+1}(j_0-j^*)!}{j_0(j_0+1)!\eta(j+1)}\\
&\approx \frac{(j_0-j^*)!}{j_0(j_0+1)!}\brac{\frac{j_0}{e^{\frac43+o(1)} \sqrt{2\p\ell}}\bfrac{n^{\ell-\frac12(j_0-1)} e^{\ell}}
{\ell^{\ell-\frac12(j_0-1)}}^{\frac{j_0}{j_0+1}}}^{j^*+1}\\
&\hspace{2in}\times \bfrac{\ell}{n}^{(j+1)(\ell-j/2)} (2\p\ell)^{\frac12(j+1)} e^{-\ell(j+1)-3\ell^2(j+1)/2n-\e_a(j+1)+\e_b(j+1)^2}\\
&\leq \bfrac{\ell}{n}^{\ell(j-j^*)+\frac12(j_0j^*-j^2){\red -}O(j_0)}.
\end{align}
Thus,
\beq{tjb}{
t_j\geq \hht_j\geq \bfrac{n}{\ell}^{\ell-\frac{j^2-j_0j^*}{2(j-j^*)}-O(j_0)} \geq \bfrac{n}{\ell}^{\ell-O(j_0)},
}
since clearly $\frac{j^2-j_0j^*}{j-j^*}\leq j_0$.

On the other hand, because $\hht_j$ is large we can write 
\mult{}{
n\approx \frac{L_j\binom{\hht_j}{j-j^*}}{N_j}\approx \bfrac{\hht_j}{\m_0}^{j-j^*}\frac{L_j\binom{\m_0}{j-j^*}}{N_j} =\bfrac{\hht_j}{\m_0}^{j-j^*}\frac{\xi(j_0,\m_0)\eta(j_0)}{j_0\eta(j+1)}\approx\\ \bfrac{\hht_j}{\m_0}^{j-j^*}\frac{n^b}{j_0}\prod_{i=j+1}^{j_0-1}\binom{n}{\ell-i}.}
We see from this that
\mult{}{
\frac{\m_0}{\hht_j} \gtrsim\brac{\frac{n^{b-1}}{j_0}  \prod_{i=j+1}^{j_0-1}\bfrac{n}{\ell}^{\ell-i}}^{1/(j-j^*)} =\\ \bfrac{n^{b-1}}{j_0}^{1/(j-j^*)}\bfrac{n}{\ell}^{(j_0-j-1)(\ell- \frac12(j_0+j))/(j-j^*)}\geq \bfrac{n}{\ell}^{j_0/{\red 3}}.
}
Consequently,
\beq{tju}{
t_j\leq \om_2^{j-j^*}\hht_j\leq \bfrac{\ell}{n}^{j_0/3}\m_0,
}
which completes the proof of the lemma.
\end{proof}
Our next task is to obtain a high probability lower bound on the random variables $Y_{j^*+1,t}$. Define, for $j^*\leq j\leq j_0-1$,
\beq{ineq1}{
\d_{j}=\frac{j-j^*}{\om_2}
}
We define $\cE_j$ to be the event that there is a $\t\in [t_j,\m_0]$ such that $Y_{j,\t}<(1-\d_j)L_j\binom{\t}{j-j^*}$.
\begin{lemma}\label{l.ej}For all $j^*\leq j\leq j_0-1$, we have
  \beq{eq0}{\Pr\brac{\cE_j}\leq \frac{j-j^*}{n^2}.
    }
Moreover, we have that w.h.p.
\beq{Yeq}{
Y_{j_0,\m_1}\gtrsim  \frac{\xi(j_0,\m_0)}{j_0}.
}
\end{lemma}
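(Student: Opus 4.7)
The plan is to prove Lemma \ref{l.ej} by induction on $j$, establishing $\Pr(\cE_j) \leq (j-j^*)/n^2$ for $j = j^*, j^*+1, \dots, j_0 - 1$, with the concluding inequality \eqref{Yeq} coming from one additional application of the inductive step at $j = j_0$. The base case $j = j^*$ is immediate: $\delta_{j^*} = 0$, $L_{j^*}\binom{\tau}{0} = L_{j^*} = \zeta(j^*,\mu_0)$, and the stopping time definition \eqref{defzeta} gives $Y_{j^*,0} \geq |\cO_{k+j^*, \t_0}| \geq \zeta(j^*,\mu_0)$; monotonicity of $Y_{j^*,\cdot}$ then forces $\Pr(\cE_{j^*}) = 0$.

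For the inductive step, fix $j^* < j \leq j_0-1$, condition on $\neg\cE_{j-1}$, and aim to show $\Pr(\cE_j \mid \neg\cE_{j-1}) \leq 1/n^2$. The conditional recurrence \eqref{re2x} combined with the inductive hypothesis $Y_{j-1,s-1} \geq (1-\delta_{j-1})L_{j-1}\binom{s-1}{j-1-j^*}$ (valid for $s-1 \geq t_{j-1}$) yields, after summation using $L_j = L_{j-1}/N_{j-1}$ and the fact that $\binom{t_{j-1}}{j-j^*}/\binom{\tau}{j-j^*} \leq \omega_2^{-(j-j^*)}$ is negligible for $\tau \geq t_j$, the expectation bound
\[
\E[Y_{j,\tau}] \geq (1-\delta_{j-1}-o(1))L_j\binom{\tau}{j-j^*}.
\]
To pass from expectation to a high-probability bound I would apply a Bernstein/Freedman-type inequality. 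To handle the fact that after $\tend$ the increments of $Y_{j,t}$ can grow through $\Theta_t^2$, I would couple $Y_{j,t}$ from below by $\widetilde Y_{j,t} = \sum_{s \leq t} B_s$, where $B_s$ is the indicator that $\rho_s$ deposits a particle at level $\geq k+j^*+j$ (or terminates at $\1$); then $B_s \in \{0,1\}$, $\E[B_s\mid\mathcal{F}_{s-1}] \geq Y_{j-1,s-1}/N_{j-1}$ up to lower-order terms, and $\widetilde Y_{j,t} \leq Y_{j,t}$. Applying Freedman to $\widetilde Y_{j,\tau}$ with $\lambda = L_j\binom{\tau}{j-j^*}/\omega_2 = (\delta_j-\delta_{j-1})\E[\widetilde Y_{j,\tau}]$ yields a deviation probability of $\exp(-\Omega(L_j\binom{\tau}{j-j^*}/\omega_2^2))$, which is super-polynomially small since for $\tau \geq t_j \geq \hht_j$ we have $L_j\binom{\tau}{j-j^*} \geq nN_j$, whereas $\omega_2^2 = n^{2c}$ with $2c < 1$.

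To upgrade the pointwise bound to one uniform over $\tau \in [t_j,\mu_0]$, I would use a geometric net $\tau_i = t_j(1+n^{-3})^i$ of polynomial size $O(n^3 \log \mu_0)$. Between consecutive net points $\binom{\tau}{j-j^*}$ grows by at most the factor $(1+n^{-3})^{j_0} = 1+o(n^{-2})$ (using $j_0 = O(n^{1/2})$), so monotonicity of $Y_{j,\cdot}$ propagates the bound from the net to all $\tau$ with negligible loss. The union bound easily absorbs the net size, giving $\Pr(\cE_j\mid\neg\cE_{j-1}) \leq 1/n^2$ and hence the claimed probability bound. Finally, to obtain \eqref{Yeq}, I would repeat the inductive step once more at $j = j_0$: conditioning on $\neg\cE_{j_0-1}$ and evaluating at $\tau = \mu_1$ gives $Y_{j_0,\mu_1} \gtrsim (1-o(1))L_{j_0}\binom{\mu_1}{j_0-j^*} \approx \xi(j_0,\mu_0)/j_0$, using $\mu_1\approx\mu_0$ from \eqref{m0m1}.

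The main obstacle is the interplay between the $\Theta_t^2$ extension --- used to make the lower-bound recurrence \eqref{re2x} robust past $\tend$ --- and the concentration step, which requires bounded increments. The resolution is the coupling to $\widetilde Y_{j,t}$ above; once that is in place, the concentration is very comfortable because the expected event count at the critical time $\tau = t_j$ is at least $nN_j$, which dwarfs $\omega_2^2 = n^{2c}$ by an enormous margin, so the small gap $\delta_j - \delta_{j-1} = 1/\omega_2$ is covered with room to spare. A secondary technical point is verifying that the lower-order terms in passing from \eqref{re2x} to $\E[Y_{j,\tau}]$ cost at most an extra $o(1)$ relative factor, so that the accumulated slack across $j_0 - j^* = O(j_0)$ inductive steps is absorbed by the definition $\delta_j = (j-j^*)/\omega_2$.
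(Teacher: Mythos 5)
Your proposal is correct in outline and follows the same skeleton as the paper's proof: induction over levels with the same base case at $j^*$, the same thresholds $t_j$ and slack schedule $\d_j=(j-j^*)/\om_2$, reduction of $Y_{j,\cdot}$ to a sum of $\{0,1\}$ increments whose means are controlled by the inductive hypothesis, a concentration inequality plus a union bound over times, and a final one-shot evaluation at $t=\m_1$ giving $Y_{j_0,\m_1}\gtrsim L_{j_0-1}\binom{\m_0}{j_0-j^*}/N_{j_0-1}\approx \xi(j_0,\m_0)/j_0$. Where you differ is the concentration device. The paper manufactures genuinely \emph{independent} Bernoulli variables: for each $\t$ it fixes the lexicographically first set $S_{j,\t}$ of the prescribed size $\rdup{(1-\d_j)L_j\binom{\t}{j-j^*}}$ containing as many occupied vertices as possible, lets $\th_{j+1,\t}$ indicate that $\r_\t$ meets $S_{j,\t}$ (the hitting probability is then a deterministic function of the prescribed size, whence independence), observes that $\neg\cE_j$ forces $S_{j,\t}$ to be fully occupied so that $Y_{j+1,t}\geq Z_{j+1,t}$, and applies Hoeffding with a direct union bound over all integer $t\in[t_{j+1},\m_1]$ (this is where $a<1-2c$ is used, since $\m_1=(n/\ell)^{\ell+o(\ell)}$). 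You instead keep an adapted indicator process $\widetilde Y$ and invoke Freedman, restoring uniformity in $t$ by a geometric net plus monotonicity; both devices work, and your coupling via the event that $\r_s$ sticks at level $\geq k+j^*+j$ or above $\1$ handles the $\Theta_t^2$ extension at least as cleanly as the paper's, since after $\tend$ that event has conditional probability $1$.

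The one step you should tighten is the phrase ``condition on $\neg\cE_{j-1}$ and apply Freedman.'' The event $\cE_{j-1}$ refers to the whole window $[t_{j-1},\m_0]$, so it is not measurable with respect to the past at the times $s$ whose increments you are summing, and conditioning on it can destroy the supermartingale structure Freedman requires. The standard repair is to work with the adapted good event $\mathcal{G}_s=\set{Y_{j-1,\t}\geq(1-\d_{j-1})L_{j-1}\binom{\t}{j-1-j^*}\ \forall\, \t\in[t_{j-1},s]}$ and modify (or stop) the increments at the first failure time, so that $\Pr(\cE_j)\leq\Pr(\cE_{j-1})+\Pr(\text{deviation of the modified process})$; this is exactly the bookkeeping that the paper's fixed-size sets $S_{j,\t}$ perform automatically. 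With that adjustment your argument goes through.
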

\begin{proof}
  We prove \eqref{eq0} by induction.
The base case $j=j^*$ is trivial because all we assume is that $Y_{j^*,t}\geq L_{j^*}=\z(j^*,\m_0)$ for $t\geq \t_0$. 

Assume now that $j^*+1\leq j+1\leq j_0-1$.  We write 
\[\tilde \cL_j=\bigcup_{j'\geq j} \cL_{j'}\]
and 
\[\tilde N_j=|\tilde \cL_j|=\sum_{j'\geq j}\binom{n}{\ell-j'}.\]
We define a new random variable $Z_{ j+1,t}=\th_{j+1,t_j}+\th_{j+1,t_{j}+1}+\dots+\th_{j+1,t}$, where the $\th_{j+1,\t}$'s are independent $\set{0,1}$ random variables where 
\beq{probtheta}{
\E(\th_{j+1,\t})=\frac{\rdup{(1-\d_j)L_j\binom{\t}{j-j^*}}}{\tilde N_j}\gtrsim \frac{(1-\d_j)L_j\binom{\t}{j-j^*}}{N_j}
} 
We will define these variables so that  
\beq{notEj}{
\neg \cE_j \quad\text{implies}\quad Y_{j+1,t}\geq Z_{j+1,t}\text{ for }t\in [t_{j},\m_1]. 
}
For each $\t\geq t_j$, we define $S_{j,\t}$ to be the lexicographically first subset of $\bar \cL_j$ among subsets of size $\rdup{(1-\d_j)L_j\binom{\t}{j-j^*}}$ for which a maximum possible number of vertices at level $j$ are occupied.  (In particular, $\neg \cE_j$ implies that $S_{j,\t}$ is full.) We let $\th_{j+1,\t}$ be the indicator random variable for the event that the path $\r_\t$ used at step $\t$ intersects $S_{j,\t}$. Observe that the $\th_{j+1,\t}$'s are independent for $\t\geq t_j$ and also that \eqref{probtheta} and \eqref{notEj} hold.

Now for $t\geq t_{j+1}$ we have
\begin{align}
\E(Z_{j+1,t})&\gtrsim \frac{(1-\d_j)L_j}{N_j}\sum_{\t=t_{j}}^{t-1}\binom{\t}{j-j^*}\\
&=\frac{(1-\d_j)L_j}{N_j}\brac{\binom{t}{j-j^*+1}-\binom{t_j+1}{j-j^*+1}}\\
&\geq \frac{(1-\d_j)L_j\binom{t}{j-j^*+1}}{N_j}\brac{1-\bfrac{t_j+1}{t}^{j-j^*+1}}\\
&\geq\frac{(1-\d_j)L_j\binom{t}{j-j^*+1}}{N_j}\brac{1-\bfrac{1}{\om_2}^{j-j^*+1}}\\
&\geq \frac{\brac{1-\d_j-\frac{1}{2\om_2}}L_j\binom{t}{j-j^*+1}}{N_j}
\end{align}
And applying Hoeffding's theorem, we see that 
\mult{prob}{
\Pr\brac{\exists t\in [t_{j+1},\m_1]:Z_{j+1,t}\leq \frac{(1-\d_{j+1})L_{j}\binom{t}{{j+1-j^*}}}{N_j}}\leq \sum_{t=t_{j+1}}^{\m_1} \exp\set{-\frac{L_{j}\binom{t}{{j+1-j^*}}}{10\om_2^2N_j}}\\
\leq \m_1\exp\set{-\frac{n}{10\om_2^2}}=\bfrac{n}{\ell}^{\ell+o(\ell)} \exp\set{-\frac{n^{1-2c}}{10}}.
}
Thus 
$$\Pr(\cE_{j+1})\leq \Pr(\cE_{j})+\bfrac{n}{\ell}^{\ell+o(\ell)} \exp\set{-\frac{n^{1-2c}}{10}} \leq \frac{j-j^*+1}{n^2},$$
completing the inductive proof of \eqref{eq0}. 

Evaluating just the $t=\m_1$ term from \eqref{prob} with $j=j_0-1$, $\d_{j_0}=n^{-c/4}$ gives
\begin{multline}
\Pr\brac{Y_{j_0,\m_1}\leq \frac{(1-\d_{j_0})L_{j_0-1}\binom{\m_0}{{j_0-j^*}}}{N_{j_0-1}}}\leq \Pr\brac{Z_{j_0,\m_1}\leq \frac{(1-\d_{j_0})L_{j_0-1}\binom{\m_0}{{j_0-j^*}}}{N_{j_0-1}}}+\Pr(\cE_{j_0-1})\\\leq \exp\set{-\frac{L_{j_0-1}\binom{\m_0}{{j_0-j^*}}}{10n^{c/2}N_{j_0-1}}}+o(n^{-1}) =\exp\set{-\frac{\xi(j_0,\m_0)}{10j_0n^{c/2}}}+o(n^{-1})\\\leq \exp\set{-\frac{n^b}{20n^{(c+a)/2}}}+o(n^{-1})
=o(n^{-1}).
\end{multline}
(Notice we only proved concentration for one value of $t$ in the $j_0$ case, as opposed to an interval as in \eqref{prob}.)
So we have that w.h.p.
\[
Y_{j_0,\m_1}\gtrsim \frac{L_{j_0-1}\m_0^{j_0-j^*}}{N_{j_0-1}(j_0-j^*)!}\approx  \frac{\binom{\m_0}{j_0+1}\m_0^{j_0-j^*}}{j_0\binom{\m_0}{j_0-j^*}\eta(j_0)(j_0-j^*)!} \approx \frac{\binom{\m_0}{j_0+1}}{j_0\eta(j_0)}=\frac{\xi(j_0,\m_0)}{j_0},\]
giving \eqref{Yeq}.
\end{proof}

So now let $\t_0^*\leq \m_1$ be the first time that level $k+j_0$ is non-empty. Equation \eqref{Yeq} shows that $\t_0^*$ exists w.h.p. Now consider the next $\om_1\binom{n}{\ell-j_0}$ particles. We argue that w.h.p.~these particles create an isolated path from level $k+j_0$ to the top. 

Observe that we have w.h.p.~that
\beq{good}{
|\cO_{k+j_0,\t_0^*}|=1\text{ and }|\cO_{k+j_0-1,\t_0^*}|\leq n^b\log n.
}
For the second bound we have used the Markov inequality and \eqref{qaz2} from Lemma \ref{Xbounds}.
Note that we have $\cO_{j,\t_0^*}=\varnothing$ for $j>k+j_0$. Next let $\t^*_i,i \geq 1$ be the time when the first particle occupies $\cL_{k+j_0+i}$. We observe that
\beq{stopgap}{
\Pr\brac{\t^*_{i}-\t^*_{i-1}\geq\om_1\binom{n}{k+j_0+i-1}}\leq \brac{1-\frac{1}{\binom{n}{k+j_0+i-1}}}^{\om_1\binom{n}{k+j_0+i-1}}\leq e^{-\om_1}.
}
We observe next that for $i\geq 0$ we have
\beq{total}{
\om_1\sum_{r=i}^{n-k-j_0}\binom{n}{k+j_0+r}\leq 2\om_1\binom{n}{k+j_0+i}
}
This implies that for $i\leq n-k-j_0$ we that w.h.p.,
\beq{sumsig}{
\t^*_i\leq \t_0^*+2\om_1\binom{n}{k+j_0}\ll \m_0.
}
For the final inequality we used \eqref{f1}. 

In particular, $\tend-\t_0^*=\t^*_{n-k-j_0}-\t_0^*\ll \m_0$.
Finally, let us consider the probability that $|\cO_{j,\tend}|\geq 2$ for some $i\geq0,j\geq k+j_0$.  At a fixed time $t\in [\t_0^*,\m_0]$, the probability that a particle lands on level $k+j_0$ is at most $\frac{n^b\log n}{\binom{n}{k+j_0-1}}$ and the probability that a particle at time $t\in (\t^*_i,\tend]$ lands at level $k+j_0+i$ ($i>0$) by colliding with the \emph{first} particle which landed at level $k+j_0+i-1$ is at most $\frac{1}{\binom{n}{k+j_0+i-1}}$.  Thus, using \eqref{stopgap}, \eqref{total} and \eqref{sumsig}, the probability the particle at time $t\in [\t_0^*,\tend]$ becomes the second particle to occupy a level $j\in [k+j_0,n]$ is at most
\pushQED{\qed}
\beq{last}{
2\om_1 \binom{n}{k+j_0}\frac{n^{b}\log n}{\binom{n}{k+j_0-1}}+ \sum_{i=1}^{n-k-j_0-1}2\om_1\binom{n}{k+j_0+i}\frac{1}{\binom{n}{k+j_0+i-1}}+o(1) =o(1).\qedhere
}
\popQED

\section{Further Questions}
In some sense, our theorems characterize the beginning and end of the process under consideration.  Understanding the behavior of the process in the middle of the cube seems like a major challenge.  On the other hand, it is likely to be a prerequisite for an understanding of some basic parameters of the model.  For example, from empirical evidence, the following seems likely:
\begin{conjecture}
$\tend=o(2^n)$.
\end{conjecture}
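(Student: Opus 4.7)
My plan is to leverage what is already implicit in the proof of Theorem~\ref{t.path}: with high probability $\tend \leq \t_0 + O(\m_0)$. Taking $\ell = n^{1/2-\e}$ as in that proof, the explicit formula \eqref{valm0} yields $\log \m_0 = (1/2+\e)\,n^{1/2-\e}\log n\,(1+o(1)) = o(n)$, so that $\m_0 = 2^{o(n)} = o(2^n)$. Moreover, since $\z(j_0-1,\m_0)<1$ by \eqref{noA1b}, the stopping time $\t_0$ is at most the first time $T^\dagger$ at which any single vertex of $\cL_{k+j_0-1}$ becomes occupied. The conjecture therefore reduces to showing $T^\dagger = o(2^n)$ with high probability.

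\textbf{Bounding $T^\dagger$.} I would attempt a layer-by-layer climbing argument. Write $T_m$ for the first time $\cL_m$ contains an occupied vertex. Whenever $\cL_{m-1}$ holds an occupied fraction $\rho_{m-1}$ with levels above still empty, each subsequent walk lands at $\cL_m$ with probability $\gtrsim \rho_{m-1}$, giving a geometric bound on $T_m - T_{m-1}$. Crucially, consecutive $T_m$'s ought to be \emph{bundled}: any decreasing walk visits every upper level before being blocked, so a single walk can witness first arrivals at several successive levels. For $m \leq \e^3 n$ the density $\rho_{m-1}$ is directly controlled by Theorems~\ref{t.full}--\ref{t.most} and contributes $o(2^n)$ to $T^\dagger$. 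The hard range is $\e^3 n < m < n - n^{1/2-\e}$; here I would try to push the inductive argument of Lemma~\ref{l.PhiUp} past its $k \leq \e^3n$ barrier by rerunning the Bernstein-type concentration of Lemma~\ref{l.conc} with parameters tuned to the larger-$k$ regime, aiming to keep $\Phi_{v,t}+\Up_{v,t}$ bounded by a small constant throughout a suitably long window.

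\textbf{Main obstacle and alternative.} The bottleneck is precisely the middle-level behaviour that the paper flags as poorly understood: the naive estimate $T_m-T_{m-1}\leq \binom{n}{m-1}$ sums to $\Theta(2^n)$ across $m$, and beating it requires genuine bundling or an honest density bound near level $n/2$. A possibly cleaner attack is a monotonicity/coupling argument. For a \emph{single} walk $\r_t$, enlarging the cluster only causes that walk to be blocked earlier and thus deposit its particle higher up, suggesting that $T^\dagger$ in the true process is dominated by the corresponding first-arrival time in a ``worst-case'' greedy process that saturates each level from below before moving on. Iterating the coupling is subtle, since set inclusion of clusters is not preserved after a single step; however a weaker comparison at the level of the maximum occupied height may still survive, and it would reduce the estimation of $T^\dagger$ to a coupon-collector-style calculation across levels of total size $o(2^n)$. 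Once $T^\dagger = o(2^n)$ is in hand, combining with $\tend - \t_0 = O(\m_0) = 2^{o(n)}$ yields the conjectured bound $\tend = o(2^n)$, and the hardest step will be either the extension of Lemma~\ref{l.PhiUp} to larger $k$ or the rigorous formulation of the height-monotonicity coupling.
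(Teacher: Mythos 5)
You are attempting a statement that the paper itself does not prove: it is stated as a conjecture, with the authors explicitly naming the middle levels of the cube as the missing ingredient. Your write-up, by its own admission, is a plan rather than a proof, and the gap it leaves open is exactly the conjecture's content. The sound part is the reduction: w.h.p.\ $\tend\leq \t_0+O(\m_0)$ by the Section~3 analysis, $\m_0=(n/\ell)^{\ell+o(\ell)}=2^{o(n)}$ for $\ell=n^{1/2-\e}$, and since $\z(j_0-1,\m_0)<1$ the stopping time $\t_0$ is at most the first time level $n-\ell+j_0-1$ is occupied. But this only relocates the entire difficulty into showing that the cluster reaches level $n-n^{1/2-\e}$ within $o(2^n)$ steps (your $T^\dagger$), and neither of your two proposed routes closes that step.

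Concretely: (a) extending Lemma~\ref{l.PhiUp} beyond $k\leq\e^3n$ is not a matter of retuning the Bernstein-type Lemma~\ref{l.conc}. The induction needs $\omk=\frac{\e}{4}\cdot\frac{n-k}{k+1}$ to dominate entropy terms of order $k\log(ne/k)$ (compare \eqref{anyvomcond} and \eqref{increasingink}); for $k=\Theta(n)$ one has $\omk=O(1)$ while the union bounds run over $\binom{n}{k}=2^{\Theta(n)}$ vertices, so the estimates collapse, and Theorem~\ref{t.notall} already shows the levels stop being full once $k\gg\sqrt{n/\log n}$. Near level $n/2$ neither a density lower bound nor the ``higher levels essentially empty'' input behind the $\Phi$ estimates is available. (b) The worst-case/greedy coupling is precisely what the paper says it cannot make rigorous: the process is not monotone in a clean way. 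A larger cluster does block a single walk earlier, but the comparison is not preserved under iteration, and even your weakened comparison of maximum occupied heights is unproven --- a higher cluster can also intercept walks that, in the smaller cluster, would have been the ones to raise the maximum height further. You acknowledge both obstacles but resolve neither, so the central claim $T^\dagger=o(2^n)$ remains unproved and the argument does not establish the conjecture. (A minor additional point: for the range $k\leq\e^3 n$ the density input should be the time-indexed Lemma~\ref{l.positivedensity} at times $\tk$ rather than Theorems~\ref{t.full}--\ref{t.most}, which concern $C_{\tend}$; but that range contributes only $O\bigl(\sum_{k\leq\e^3n}\binom{n}{k}\bigr)=o(2^n)$ in any case and is not where the problem lies.)
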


Of course an extremely natural target is the following:
\begin{q}
How large can the parameter $a$ be in Theorem \ref{t.path}?
\end{q}

There are also some interesting modifications of the model to consider.  For example, what happens if the random walks are not monotone?  In the Boolean lattice this may seem a bit unnatural.  It may be interesting to consider hypercubes $[m]^n$, in which case the behavior of the process relative to the relationship between $m$ and $n$ can be explored.

\appendix
\section{Concentration inequality}\label{s.bern}
In this section we derive Lemma \ref{l.conc}.  Recall the statement:
\begin{lemma}[Lemma \ref{l.conc}]
    Let $X_1,\dots,X_N$ be independent random variables such that, for all $i$, $\E(X_i)\leq E$ and $X_i\in [0,C]$ almost surely. Then for $S_N=\sum_{i=1}^N X_i$, $E_N=\E(S_N)$, and for all $t\leq NE$, we have that
  \[
\Pr\brac{|S_N-E_N|>t}<2\exp{\brac{-\frac{t^2}{4NEC}}}.
  \]
\end{lemma}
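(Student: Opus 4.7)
The plan is to derive the lemma as a direct consequence of Bernstein's inequality for independent bounded random variables, exploiting the fact that the variables are nonnegative with small means but potentially much larger range. Bernstein's inequality states that for independent $X_i$ with $|X_i - \E X_i| \leq M$ almost surely and $\sigma^2 = \sum_i \Var(X_i)$,
\[
\Pr(|S_N - E_N| > t) \leq 2\exp\left(-\frac{t^2/2}{\sigma^2 + Mt/3}\right).
\]

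The key observation is the following variance bound. Since $X_i \in [0,C]$ and $\E X_i \leq E$, we have $X_i^2 \leq C X_i$, so
\[
\Var(X_i) \leq \E(X_i^2) \leq C \E(X_i) \leq CE,
\]
giving $\sigma^2 \leq NCE$. This is the step where the product form $NEC$ in the denominator of the target bound appears; it is the improvement over Hoeffding, which would only use $X_i^2 \leq C^2$ to get $\sigma^2 \leq NC^2$.

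Plugging in $\sigma^2 \leq NCE$ and $M \leq C$ into Bernstein yields
\[
\Pr(|S_N - E_N| > t) \leq 2\exp\left(-\frac{t^2/2}{NCE + Ct/3}\right).
\]
The final step is to use the hypothesis $t \leq NE$ to simplify the denominator. Under this assumption $Ct/3 \leq NCE/3$, so the denominator is at most $\tfrac{4}{3}NCE$, giving
\[
\Pr(|S_N - E_N| > t) \leq 2\exp\left(-\frac{3t^2}{8NCE}\right) \leq 2\exp\left(-\frac{t^2}{4NCE}\right),
\]
which is the claimed bound. There is no real obstacle here, the argument is entirely routine; the only conceptual point worth emphasizing is the $X_i^2 \leq CX_i$ trick that yields a variance proxy linear in $E$ rather than quadratic in $C$, which is what makes the bound genuinely stronger than Hoeffding in the regime $E \ll C$ relevant to the application in Lemma \ref{l.PhiEp}.
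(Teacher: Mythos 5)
Your proof is correct and takes essentially the same route as the paper: both apply Bernstein's inequality with the variance proxy $V=NEC$ obtained from $X_i^2\leq CX_i$ (hence $\E(X_i^2)\leq CE$), and then use the hypothesis $t\leq NE$ to absorb the linear term $Ct$ into the variance term in the denominator, yielding the exponent $-t^2/(4NEC)$. The only cosmetic difference is that the paper invokes a moment-condition form of Bernstein (verifying $\sum\E(\max(X_i^q,0))\leq\frac{q!}{2}VC^{q-2}$ and applying it to $\pm X_i$), whereas you use the standard two-sided variance form with $|X_i-\E X_i|\leq C$, which gives the marginally better constant $3/8$ before relaxing to $1/4$.
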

   This is an immediate consequence of Bernstein's inequality (see, e.g., \cite{Bern}):
   \begin{lemma}[Bernstein]\label{l.bern}
     Let $X_1,\dots,X_N$ be independent random variables and $V,C$ be constants such that
     \begin{equation}
       \label{V}
       \sum_{i=1}^N \E(X_i^2)\leq V
     \end{equation}
     and for all $q\geq 3$,
     \begin{equation}
       \label{Cq}
       \sum_{i=1}^N \E(\max(X_i^q,0))\leq \frac{q!}{2}VC^{q-2}.
     \end{equation}
     Then  for $S_N=\sum_{i=1}^N X_i$, $E_N=\E(S_N)$, we have that
     \pushQED{\qed} 
     \[
     \Pr\brac{S_N-E_N>t}<\exp{\bfrac{-t^2}{2(V+Ct)}}.\qedhere
     \]
     \popQED
   \end{lemma}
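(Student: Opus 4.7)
The plan is to prove this by the standard Cram\'er--Chernoff method: bound the moment generating function of $S_N - E_N$ and then optimize in the dummy parameter $\lambda$. The one subtlety is that the hypotheses bound $\E[\max(X_i^q,0)]$ rather than centered moments $\E[(X_i-\E X_i)^q]$, so the MGF has to be set up to make that usable.

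For any $\lambda>0$ with $\lambda C<1$, Markov's inequality together with independence of the $X_i$'s gives
\[
\Pr\brac{S_N-E_N>t} \;\leq\; e^{-\lambda t}\,\E[e^{\lambda(S_N-E_N)}] \;=\; e^{-\lambda(t+E_N)}\prod_{i=1}^N \E[e^{\lambda X_i}].
\]
Next, I would apply $\log(1+u)\le u$ to each factor of the product and Taylor-expand $e^{\lambda X_i}$ inside the expectation to get
\[
\sum_{i=1}^N \log\E[e^{\lambda X_i}] \;\leq\; \sum_{i=1}^N \E\brac{e^{\lambda X_i}-1} \;=\; \lambda E_N + \sum_{q\geq 2}\frac{\lambda^q}{q!}\sum_{i=1}^N \E[X_i^q].
\]
Because $\lambda>0$, the pointwise inequality $X_i^q\le \max(X_i^q,0)$ licenses replacing $\E[X_i^q]$ with $\E[\max(X_i^q,0)]$ in every term with $q\ge 3$. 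Combined with \eqref{V} for the $q=2$ term and \eqref{Cq} for the rest, the sum telescopes into a geometric series:
\[
\sum_{i=1}^N\log\E[e^{\lambda X_i}] - \lambda E_N \;\leq\; \frac{\lambda^2 V}{2} + \sum_{q\ge 3}\frac{\lambda^q}{q!}\cdot\frac{q!}{2}VC^{q-2} \;=\; \frac{V}{2}\sum_{q\ge 2}\lambda^q C^{q-2} \;=\; \frac{\lambda^2 V}{2(1-\lambda C)},
\]
where convergence uses $\lambda C<1$. Exponentiating and plugging back into Markov yields
\[
\Pr\brac{S_N-E_N>t} \;\leq\; \exp\brac{-\lambda t + \frac{\lambda^2 V}{2(1-\lambda C)}}.
\]

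To finish, I would take $\lambda = t/(V+Ct)$, which automatically satisfies $\lambda C = Ct/(V+Ct)<1$. A short computation gives $1-\lambda C = V/(V+Ct)$, so $\lambda^2 V/(2(1-\lambda C)) = t^2/(2(V+Ct))$ and $\lambda t = t^2/(V+Ct)$; the exponent collapses to $-t^2/(2(V+Ct))$, which is the claimed bound.

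The main obstacle, such as it is, is the asymmetry in the moment hypothesis: the lemma provides no direct control of $\E[(X_i-\E X_i)^q]$, only of $\E[\max(X_i^q,0)]$. Working with the MGF of $X_i$ itself (rather than of $X_i - \E X_i$) and extracting the centering through the clean prefactor $e^{-\lambda E_N}$ sidesteps any central-versus-raw-moment comparison; the single pointwise inequality $X_i^q \le \max(X_i^q,0)$, which holds regardless of the sign of $X_i$, is all that is needed.
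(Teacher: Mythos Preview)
The paper does not actually prove this lemma; it is stated as a cited result from \cite{Bern} (Boucheron--Lugosi--Massart) and used as a black box to derive Lemma~\ref{l.conc}. Your write-up, by contrast, supplies the standard Cram\'er--Chernoff proof, and the overall strategy and the final optimization $\lambda=t/(V+Ct)$ are correct.

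One technical point deserves care. When you ``Taylor-expand $e^{\lambda X_i}$ inside the expectation'' to write $\E[e^{\lambda X_i}-1]=\sum_{q\ge 1}\frac{\lambda^q}{q!}\E[X_i^q]$, you are implicitly interchanging a series and an expectation; the hypotheses give no control on $\E[|X_i|^q]$ (only on $\E[\max(X_i^q,0)]$), so absolute convergence is not immediate. The clean fix is to work instead with $\phi(u)=e^u-1-u\ge 0$ and split on the sign of $X_i$: for $X_i\le 0$ one has $\phi(\lambda X_i)\le \lambda^2 X_i^2/2$, while for $X_i>0$ the series $\phi(\lambda X_i)=\sum_{q\ge 2}(\lambda X_i)^q/q!$ has nonnegative terms, so Tonelli justifies the swap. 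Summing over $i$ then gives exactly your bound $\sum_i\E[\phi(\lambda X_i)]\le \lambda^2 V/(2(1-\lambda C))$, and the rest of your argument goes through unchanged. With this patch, the proof is complete (yielding $\le$; the strict $<$ in the statement is a harmless cosmetic difference).
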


\begin{proof}[Proof of Lemma \ref{l.conc}]
  In the setting of Lemma \ref{l.conc}, the conditions of Lemma \ref{l.bern} hold for the random variables $X_i$ (as well as for the random variables $-X_i$) by taking $C$ as given, and taking $V=NCE$, since $0\leq X_i\leq C$. So we have that
  \[
  \Pr\brac{S_N-E_N>t}<\exp{\bfrac{-t^2/2}{NCE+Ct}}\leq \exp{\bfrac{-t^2/2}{2NCE}},
  \]
  assuming $t\leq EN$.  The analogous statement holds for $-S_N$ also, giving the Lemma.
  \end{proof}
\end{document}